\documentclass[10pt, a4paper]{article} 
\usepackage{amsthm}
\usepackage[pdftex]{graphicx} 
\usepackage[pdftex]{hyperref} 
\usepackage{amsmath}
\usepackage{amssymb}
\usepackage{mathtools}
\usepackage{ascmac}
\usepackage{mathrsfs}
\usepackage{setspace}
\usepackage{nccmath}
\usepackage{color}
\usepackage{enumerate}
\usepackage[rm]{titlesec}
\usepackage{aliascnt} 
\usepackage{indentfirst}
\usepackage{tikz}
\usetikzlibrary{positioning}
\usetikzlibrary{topaths,calc}
\usepackage{here}
\usetikzlibrary{cd}
\usepackage{tikz-cd}
\usepackage{comment}

\usepackage[english]{babel}  
\addto\extrasenglish{}
\addto\extrasenglish{}
\addto\extrasenglish{}

\usepackage[margin=15mm]{geometry} 
\titleformat*{\section}{\center\large\bfseries}
\titleformat*{\subsection}{\center\bfseries}
\titleformat*{\subsubsection}{\bfseries}

\hypersetup{colorlinks=true,
citecolor=hanpurple,
linkcolor=officegreen,
urlcolor=orange}

\numberwithin{equation}{section}

\theoremstyle{plain} 
\newtheorem{thm}{Theorem}[section] 
 
\newaliascnt{lem}{thm} 
\newtheorem{lem}[lem]{Lemma}
\aliascntresetthe{lem}

\newaliascnt{prop}{thm} 
\newtheorem{prop}[prop]{Proposition}
\aliascntresetthe{prop}

\newaliascnt{cor}{thm} 
\newtheorem{cor}[cor]{Corollary}
\aliascntresetthe{cor}

\newaliascnt{conj}{thm} 
\newtheorem{conj}[conj]{Conjecture}
\aliascntresetthe{conj}

\theoremstyle{definition} 
\newaliascnt{defi}{thm} 
\newtheorem{defi}[defi]{Definition}
\aliascntresetthe{defi}

\newaliascnt{exa}{thm} 
\newtheorem{exa}[exa]{Example}
\aliascntresetthe{exa}

\newaliascnt{rem}{thm} 
\newtheorem{rem}[rem]{Remark}
\aliascntresetthe{rem}

\newaliascnt{nota}{thm} 
\newtheorem{nota}[nota]{Notation}
\aliascntresetthe{nota}

\newaliascnt{obs}{thm} 

\aliascntresetthe{obs}

\newaliascnt{property}{thm} 

\aliascntresetthe{property}

\newaliascnt{ass}{thm} 

\aliascntresetthe{ass}

\newaliascnt{setting}{thm} 

\aliascntresetthe{setting}

\newaliascnt{fact}{thm} 
\newtheorem{fact}[fact]{Fact}
\aliascntresetthe{fact}

\newcommand{\dis}{\displaystyle} 
\newcommand{\A}{\alpha}
\newcommand{\B}{\beta}

\newcommand{\CC}{\mathcal{C}}
\newcommand{\Di}{D^{-1}}
\newcommand{\dd}{\mathrm{d}}

\newcommand{\dev}{\delta_v}
\newcommand{\dex}{\delta_x}
\newcommand{\dey}{\delta_y}
\newcommand{\dez}{\delta_z}

\newcommand{\Jf}{J_\la f}
\newcommand{\K}{\kappa}
\newcommand{\KD}{\mathsf{KD}_\lambda}
\newcommand{\wKD}{\mathsf{wKD}_\lambda}

\newcommand{\la}{\lambda}
\newcommand{\LL}{\mathcal{L}}
\newcommand{\N}{\mathbb{N}}
\newcommand{\PP}{\mathscr{P}}
\newcommand{\psf}{\psi_\la f}
\newcommand{\R}{\mathbb{R}}

\newcommand{\ttilde}{\widetilde}
\newcommand{\vol}{\mathsf{vol}}
\newcommand{\Z}{\mathbb{Z}}
\newcommand{\maru}[1]{\raise0.2ex\hbox{\textcircled{\scriptsize{#1}}}} 
\newcommand{\eps}{\varepsilon} 

\newcommand{\dto}{\downarrow}
\newcommand{\rv}{\mathbb{R}^V}

\newcommand{\argmax}{\mathop{\rm arg\,max}\limits} 
\newcommand{\argmin}{\mathop{\rm arg\,min}\limits} 
\newcommand{\rwx}{m_x^\la} 
\newcommand{\rwy}{m_y^\la}

\newcommand{\kxy}{\kappa(x,y)}

\newcommand{\kLLY}{\kappa_{\mathrm{LLY}}}
\newcommand{\kIKTU}{\kappa_{\mathrm{IKTU}}}
\newcommand{\kLLYxy}{\kappa_{\mathrm{LLY}}(x,y)}
\newcommand{\kIKTUxy}{\kappa_{\mathrm{IKTU}}(x,y)}
\newcommand{\KIKTUxy}{\mathcal{K}_{\mathrm{IKTU}}(x,y)}
\newcommand{\Kxy}{\mathcal{K}(x,y)}
\newcommand{\wIKTU}{\mathsf{wIKTU}}
\newcommand{\Px}{\mathsf{P}_x}
\newcommand{\QQ}{\mathsf{Q}}
\newcommand{\Qy}{\mathsf{Q}_y}
\newcommand{\Rx}{\mathsf{R}_x}
\newcommand{\Rz}{\mathsf{R}_z}
\renewcommand{\SS}{\mathsf{S}}
\newcommand{\Sy}{\mathsf{S}_y}
\newcommand{\VV}{\mathsf{V}_0}
\newcommand{\LLP}{\LL_\mathsf{P}^0g}
\newcommand{\LLR}{\LL_\mathsf{R}^0g}
\def\:={\coloneqq} 
\def\kakko(#1){\left\langle #1 \right\rangle_{D^{-1}}}
\def\RD(#1){\mathcal{R}_{D^{-1}}(#1)}
\def\fDi(#1){{#1}_{D^{-1}}^{-1}} 
\def\flow(#1){\mathsf{b}_{#1}(f)}
\def\posiflow(#1){\mathsf{b}_{#1}^+(f)}
\def\negaflow(#1){\mathsf{b}_{#1}^-(f)}
\def\ungauss(#1){\left\lfloor #1 \right\rfloor} 
\def\upgauss(#1){\left\lceil #1 \right\rceil} 
\def\diam(#1){\mathsf{diam}(#1)}
\def\W(#1){W_1\big(#1\big)}
\def\01{\{0,1\}^{\rv}}
\def\conv(#1){\mathsf{Conv}\big( #1 \big)}
\def\Lip{\mathsf{Lip}_w^1(V)}
\def\Lipxy{\mathsf{Lip}_w^1(V;x,y)}
\def\tLip{\ttilde{\;\mathsf{Lip}_w^1}(V)\;}
\def\Lipxy{\mathsf{Lip}_w^1(V;x,y)}

\def\LIPxy{\mathsf{LIP}_w^1(V;x,y)}
\def\tLipxy{\ttilde{\;\mathsf{Lip}_w^1\;}(V;x,y)}
\def\3|(#1){|\hspace{-0.4mm}|\hspace{-0.4mm}|#1|\hspace{-0.4mm}|\hspace{-0.4mm}|_{\Di}}
\definecolor{hanpurple}{rgb}{0.32, 0.09, 0.98}
\definecolor{officegreen}{rgb}{0.0, 0.5, 0.0}
\def\red(#1){\textcolor{red}{#1}}
\def\blue(#1){\textcolor{hanpurple}{#1}}
\def\green(#1){\textcolor{officegreen}{#1}}
\def\cyan(#1){\textcolor{cyan}{#1}}
\def\orange(#1){\textcolor{orange}{#1}}


\begin{document}

\title{Weak Kantorovich difference and associated Ricci curvature \\ of hypergraphs}
\author{Tomoya Akamatsu\thanks{Department of Mathematics, Osaka University, Osaka 560-0043, Japan (\texttt{u149852g@ecs.osaka-u.ac.jp})
\newline
\textit{2020 Mathematics Subject Classification:} Primary 51F30; Secondary 05C65, 05C12, 47H04.
\newline
\textit{Key words and phrases:} Ricci curvature of hypergraphs, set-valued hypergraph Laplacian, weak Kantorovich difference, weak Ikeda--Kitabeppu--Takai--Uehara curvature.
}}
\date{\empty}
\maketitle

\begin{abstract}
Ollivier and Lin--Lu--Yau established the theory of graph Ricci curvature (LLY curvature) via optimal transport on graphs.
Ikeda--Kitabeppu--Takai--Uehara introduced a new distance called the Kantorovich difference on hypergraphs and generalized the LLY curvature to hypergraphs (IKTU curvature).
As the LLY curvature can be represented by the graph Laplacian by M\"unch--Wojciechowski, Ikeda--Kitabeppu--Takai--Uehara conjectured that the IKTU curvature has a similar expression in terms of the hypergraph Laplacian.
In this paper, we introduce a variant of the Kantorovich difference inspired by the above conjecture and study the Ricci curvature associated with this distance ($\wIKTU$ curvature).
Moreover, for hypergraphs with a specific structure, we analyze a quantity $\CC(x,y)$ at two distinct vertices $x,y$ defined by using the hypergraph Laplacian.
If the resolvent operator converges uniformly to the identity, then $\CC(x,y)$ coincides with the $\wIKTU$ curvature along $x,y$.
\end{abstract}


\section{Introduction} \label{Intro}

A hypergraph is a natural generalization of a graph.
A graph describes binary relations by connecting two vertices with an edge, while in a hypergraph, one can connect three or more vertices with a hyperedge.
It is meaningful to study analysis on hypergraphs because hypergraphs can model higher-dimensional relationships, such as co-author networks.
However, many methods used for graph analysis are based on the fact that an edge is composed of two vertices and often do not work for hypergraphs.
Even if one could generalize concepts defined for graphs to general hypergraphs, it is in many cases more difficult to analyze them on hypergraphs than the case of graphs.
In this paper, we study a generalization of Ricci curvature to hypergraphs, which has been actively studied in recent years as a tool to develop analysis and geometry on graphs.

Ricci curvature is one of the most important quantities in Riemannian geometry, and its generalization to discrete spaces has been studied widely in recent years.
Although several types of discrete Ricci curvature were studied, including Forman-type and Bakry--\'{E}mery-type Ricci curvatures, in this paper, we study the \emph{LLY curvature} defined between two vertices of a graph introduced by Ollivier \cite{Ol} and Lin--Lu--Yau \cite{LLY}.
The LLY curvature of edges can be regareded as a quantity that expresses the relative ease of heat transfer in graphs, and its applications to data analysis are attracting growing interest these days (\cite{CDR, NLGGS, NLLG, SGT, SJB}, and so on).
Since hypergraphs can describe more general relationships than graphs, such quantities would also be useful for hypergraphs.
The LLY curvature $\kLLYxy$ along two vertices $x,y$ is defined by comparing the graph distance $d(x,y)$ between $x$ and $y$ with the $L^1$-Wasserstein distance $W_1\big(\rwx,\rwy\big)$ between the transition probability measures $\rwx$ and $\rwy$ at $x,y$.
We note that the key ingredients in the definition of the LLY curvature, the transition probability measures and the $L^1$-Wasserstein distance, depend on the adjacency relation of vertices.
Therefore, if we directly generalize the LLY curvature to hypergraphs, then we cannot distinguish hypergraphs from their \emph{clique expansion graphs} (see \autoref{clique expansion} for the definiton).
For this reason, generalizations of the LLY curvature to hypergraphs from different perspectives have been studied, e.g. from modified optimal transport problem on undirected or directed hypergraphs (\cite{Ak,EJ}), from the multi-marginal optimal transport problems (\cite{AGE}) and from the hypergraph Laplacian (\cite{IKTU}).
Here, the \emph{hypergraph Laplacian} $\LL$ is a multi-valued operator introduced in Yoshida \cite{Yo}.
This operator $\LL$ is derived from the heat diffusion on hypergraphs (see also \cite{HLGZ, Lo, LM}) and is applied to community detection (\cite{IMTY, TMIY}).
Ikeda--Kitabeppu--Takai--Uehara \cite{IKTU} used this hypergraph Laplacian to introduce a new distance function on the vertex set with a parameter $\la>0$, which is called the \emph{$\la$-Kantorovich difference}.
\begin{defi}[$\la$-Kantorovich difference, see \autoref{KD-def}] 
Let $\la>0$.
For two vertices $x,y$, the function $\KD(x,y)$ is defined by
\begin{equation*}
\KD(x,y)\:=\sup_{f\in\Lip}\kakko(J_\la f,\dex-\dey),
\end{equation*}
where $\Jf$ is the \emph{resolvent operator} associated with the hypergraph Laplacian, $\kakko(\cdot,\cdot)$ is the \emph{weighted inner product} and
\begin{equation*}
\Lip \:= \Big\{ f:V\to\R \;\Big|\; \kakko(f,\dex-\dey)\le d(x,y) \;\text{ for all }\;x,y\in V \Big\}.
\end{equation*}
\end{defi}
Ikeda--Kitabeppu--Takai--Uehara compared $d(x,y)$ with $\KD(x,y)$ instead of $W_1\big(\rwx,\rwy\big)$ to generalize the LLY curvature.
We call their curvature the \emph{IKTU curvature} $\kIKTUxy$.
Moreover, we can derive geometric and analytic properties of hypergraphs under suitable conditions on the IKTU curvature such as gradient estimate, Lichnerowicz-type estimate and Bonnet--Myers-type estimate (\cite[Theorems 5.2, 5.1 and 5.3]{IKTU}).
Because of its relevance with the hypergraph Laplacian, the IKTU curvature is suitable for analyzing the strength of the relationship between vertices.
For this reason, in this paper, we will focus on the IKTU curvature.

The $\la$-Kantorovich difference and the IKTU curvature are new notions and not yet well understood compared with the $L^1$-Wasserstein distance and the LLY curvature.
For example, it is difficult to calculate the IKTU curvature even for simple concrete hypergraphs.
The following conjecture by Ikeda--Kitabeppu--Takai--Uehara is one of the main motivations of our study (\cite[Remark 6.3]{IKTU}).
\begin{conj}[see \autoref{IKTU conj}]
The IKTU curvature $\kIKTUxy$ along $x,y\in V$ coincides with 
\begin{equation*}
\mathcal{C}(x,y) 
\:=
\frac{1}{d(x,y)}\inf_{f\in\Lipxy} \kakko(\LL^0f,\dex-\dey),
\end{equation*}
where $\LL^0$ is the \emph{canonical restriction} of the hypergraph Laplacian (see \autoref{canonical}), which is a single-valued operator, and
\begin{equation*}
\Lipxy \:= \Big\{ f\in\Lip \;\Big|\; \kakko(f,\dex-\dey)=d(x,y) \Big\}.
\end{equation*}
\end{conj}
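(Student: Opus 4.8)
The plan is to start from the resolvent analogue of the Ollivier--Lin--Lu--Yau construction that defines the IKTU curvature, namely
\[
\kIKTUxy=\lim_{\la\to 0^+}\frac{1}{\la}\left(1-\frac{\KD(x,y)}{d(x,y)}\right),
\]
so that, writing $\phi\:=\dex-\dey$, it suffices to prove the asymptotic expansion $\lim_{\la\to0^+}\la^{-1}\big(d(x,y)-\KD(x,y)\big)=\inf_{f\in\Lipxy}\kakko(\LL^0 f,\phi)$ and divide by $d(x,y)$. Two facts would be used throughout. First, since $\dex,\dey$ are the degree-weighted point masses, $\kakko(f,\phi)=f(x)-f(y)$; hence $d(x,y)=\sup_{f\in\Lip}\kakko(f,\phi)$, the supremum being attained exactly on $\Lipxy$, and $\kakko(\mathbf{1},\phi)=0$. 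Second, because $\LL$ is maximal monotone with single-valued canonical restriction $\LL^0$ and $V$ is finite (so every $f$ lies in the domain of $\LL$), its resolvent $\Jf=(I+\la\LL)^{-1}f\to f$ obeys the Yosida convergence $\la^{-1}(f-\Jf)\to\LL^0 f$ as $\la\to0^+$, together with the norm control $\|\la^{-1}(f-\Jf)\|\le\|\LL^0 f\|$.

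For the inequality $\kIKTUxy\le\CC(x,y)$ I would argue directly and use no information about maximizers. Fix $f\in\Lipxy$. Since $\Lipxy\subset\Lip$, this $f$ is admissible in the supremum defining $\KD(x,y)$, and by linearity of the pairing together with $\kakko(f,\phi)=d(x,y)$,
\[
\KD(x,y)\ge\kakko(\Jf,\phi)=d(x,y)-\la\,\kakko(\tfrac{f-\Jf}{\la},\phi).
\]
Dividing by $\la$ and letting $\la\to0^+$, the Yosida convergence yields $\limsup_{\la\to0^+}\la^{-1}(d(x,y)-\KD(x,y))\le\kakko(\LL^0 f,\phi)$; this already forces $d(x,y)-\KD(x,y)\le C\la$, so $\KD(x,y)\to d(x,y)$. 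Taking the infimum over $f\in\Lipxy$ and dividing by $d(x,y)$ gives $\kIKTUxy\le\CC(x,y)$.

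The reverse inequality is the substantive half. Because $J_\la(f+c\mathbf{1})=J_\la f+c\mathbf{1}$ and $\kakko(\mathbf{1},\phi)=0$, the objective $f\mapsto\kakko(\Jf,\phi)$ is invariant under additive constants; on the slice $\{f\in\Lip:f(x)=0\}$ the Lipschitz inequalities make $\Lip$ compact, and since $f\mapsto\Jf$ is nonexpansive the supremum is attained at some $f_\la\in\Lip$. Set $g_\la\:=J_\la f_\la$ and $\eta_\la\:=\la^{-1}(f_\la-g_\la)$, so that $\eta_\la\in\LL g_\la$ and $\KD(x,y)=\kakko(g_\la,\phi)=\kakko(f_\la,\phi)-\la\,\kakko(\eta_\la,\phi)$. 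As $f_\la\in\Lip$ gives $\kakko(f_\la,\phi)\le d(x,y)$, we obtain $\la^{-1}(d(x,y)-\KD(x,y))\ge\kakko(\eta_\la,\phi)$. The bound $\|\eta_\la\|\le\|\LL^0 f_\la\|$ keeps $\eta_\la$ bounded, so along a subsequence $f_\la\to f_*$, $g_\la=f_\la-\la\eta_\la\to f_*$, and $\eta_\la\to\eta_*$; closedness of the graph of $\LL$ forces $\eta_*\in\LL f_*$, while $\KD(x,y)\to d(x,y)$ forces $\kakko(f_*,\phi)=d(x,y)$, i.e. $f_*\in\Lipxy$. Hence $\liminf_{\la\to0^+}\la^{-1}(d(x,y)-\KD(x,y))\ge\kakko(\eta_*,\phi)$ for some section $\eta_*\in\LL f_*$ with $f_*\in\Lipxy$.

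The hard part, and the precise point at which the conjecture is not yet within reach, is to identify the section $\eta_*$ selected by the resolvent optimization with the canonical restriction $\LL^0 f_*$. Graph closedness yields only $\eta_*\in\LL f_*$, and at vertices $f_*$ where $\LL$ is genuinely multivalued there is no formal reason that $\kakko(\eta_*,\phi)=\kakko(\LL^0 f_*,\phi)$: the lower bound naturally produces $\inf_{f\in\Lipxy}\inf_{\eta\in\LL f}\kakko(\eta,\phi)$, whereas $\CC(x,y)$ pins the section to the minimal-norm one. Equivalently, in the expansion $\KD(x,y)=\sup_{f\in\Lip}\big[\kakko(f,\phi)-\la\,\kakko(\LL^0 f,\phi)+o(\la)\big]$ the error $o(\la)=\la\,\kakko(\frac{f-\Jf}{\la}-\LL^0 f,\phi)$ is controlled by the Yosida convergence only pointwise in $f$, and its rate degenerates exactly along the kinks of $\LL$ where maximizers accumulate; uniformity there is what fails. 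To close the gap I would analyze the first-order optimality conditions of the polyhedral problem $\sup_{f\in\Lip}\kakko(\Jf,\phi)$, exploiting that $\Lip$ is cut out by the finitely many inequalities $f(u)-f(v)\le d(u,v)$ and that $J_\la$ is piecewise linear, and try to show that the active-constraint geometry at $f_\la$ drives $\eta_\la$ onto the minimal-norm section in the limit. I expect this identification to be the genuine obstruction, requiring the additional structural hypotheses (such as uniform convergence of the resolvent to the identity) under which the selected section provably coincides with $\LL^0$; absent them, the argument delivers only $\kIKTUxy\le\CC(x,y)$ together with the weaker lower bound over sections.
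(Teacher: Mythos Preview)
The statement you are asked to prove is a \emph{conjecture}; the paper does not prove it. The paper establishes only the inequality $\kIKTUxy\le\CC(x,y)$ (its Proposition~3.4), and states explicitly that the reverse inequality \eqref{open ineq} is open. So there is no ``paper's own proof'' to compare against for the full statement.

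Your treatment of the easy direction $\kIKTUxy\le\CC(x,y)$ is correct and matches the paper's argument: both fix $f\in\Lipxy$, use it as a competitor in the supremum defining $\KD(x,y)$, and pass to the limit via the Yosida convergence $\la^{-1}(f-\Jf)\to\LL^0 f$. The paper writes this as the chain \eqref{inf,lim}--\eqref{lim,-sup}; your version is the same computation in slightly different packaging.

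For the reverse direction you do not claim a proof, and you correctly isolate the obstruction. Your framing --- extract a subsequential limit $f_*\in\Lipxy$ and a section $\eta_*\in\LL f_*$, then observe that graph closedness alone does not force $\eta_*=\LL^0 f_*$ --- is a legitimate way to see where the argument stalls. The paper's analysis is organized differently but lands on essentially the same two missing ingredients: it introduces the remainder $\psf=(f-\Jf)-\la\,\LL^0 f$ and formulates (i) a uniform-convergence hypothesis $\lim_{\la\to0}\inf_{f}\la^{-1}\kakko(\psf,\dex-\dey)=0$ (its Conjecture~3.7), and (ii) the hypothesis that the supremum in $\KD(x,y)$ may be restricted to $\Lipxy$ for small $\la$ (its condition \eqref{KD restriction}). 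Your ``uniformity of the $o(\la)$ error along the kinks of $\LL$'' is exactly the content of (i); your compactness argument, which yields $f_*\in\Lipxy$ only in the limit rather than $f_\la\in\Lipxy$ for each small $\la$, is where (ii) enters. One genuine gap in your sketch: the bound $\|\eta_\la\|\le\|\LL^0 f_\la\|$ you invoke is the standard resolvent estimate $\|\la^{-1}(f-\Jf)\|\le\|\LL^0 f\|$, but to conclude that $(\eta_\la)$ is bounded you also need $\|\LL^0 f_\la\|$ bounded uniformly in $\la$; this follows from compactness of the normalized slice of $\Lip$ together with continuity considerations, but you should say so explicitly. In any case, the identification $\eta_*=\LL^0 f_*$ remains the crux, and neither you nor the paper resolves it.
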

A similar formula for graphs, i.e., an alternative expression of the LLY curvature via the graph Laplacian, was proved by M\"unch--Wojciechowski (\cite[Theorem 2.1]{MW}).
One can derive the inequality $\kIKTUxy\le\mathcal{C}(x,y)$ from a direct calculation (see \cite[Lemma 2.11]{KM} and \autoref{K < C}).
On the other hand, the converse inequality $\kIKTUxy\ge\mathcal{C}(x,y)$ is still open.
We divide one of the sufficient conditions for affirmatively resolving \autoref{IKTU conj} into the following two ingredients:
\begin{itemize}
\item Uniform convergence of a kind of difference function $\psf$ (see \autoref{liminf conj}),
\item Restriction of the range of the supremum in the $\la$-Kantorovich difference (see \eqref{KD restriction}).
\end{itemize}
In this paper, inspired by the second condition, we introduce the \emph{$\la$-weak Kantorovich difference} by restricting the supremum in the definition of the $\la$-Kantorovich difference as follows.
\begin{defi}[$\la$-weak Kantorovich difference, see \autoref{wKD-def}] 
Let $\la>0$.
For two vertices $x,y$, we define the function $\wKD(x,y)$ by
\begin{equation*}
\wKD(x,y)\:=\sup_{f\in\Lipxy}\kakko(\Jf,\dex-\dey). 
\end{equation*}
\end{defi}
Although it is unclear whether the $\la$-weak Kantorovich difference is always a distance function on the vertex set, from calculations for concrete hypergraphs and a comparison with the case of graphs, we expect that $\wKD(x,y)$ coincides with $\KD(x,y)$ for sufficiently small $\la>0$.
In addition, our modification seems natural also from the view of the complementary slackness (\autoref{Complementary slackness}).
As with the IKTU curvature, we define the curvature along $x,y$ by comparing $d(x,y)$ and $\wKD(x,y)$ (\autoref{IKTU-type curv}).
We call it the \emph{$\wIKTU$ curvature} $\kxy$, and then we have $\kIKTUxy\le\kxy\le\CC(x,y)$.
Moreover, the following relationship is obtained.
\begin{thm}[see \autoref{main theorem}]
If \autoref{liminf conj} is true, then the $\wIKTU$ curvature concides with $\mathcal{C}(x,y)$.
\end{thm}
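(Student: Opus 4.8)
The plan is to rewrite the quantity that defines the $\wIKTU$ curvature into a form where the $\la$-dependence is isolated in a single limit, and then to invoke \autoref{liminf conj}. By \autoref{wKD-def}, every $f\in\Lipxy$ satisfies $\kakko(f,\dex-\dey)=d(x,y)$; writing $\psf$ for the difference function of \autoref{liminf conj}, so that $\Jf=f-\la\,\psf$ and $\psf\to\LL^0f$ pointwise as $\la\dto 0$ (the standard convergence of the Yosida-type approximation of the maximal monotone operator $\LL$), one gets
\begin{equation*}
\kakko(\Jf,\dex-\dey)=d(x,y)-\la\,\kakko(\psf,\dex-\dey)\qquad(f\in\Lipxy).
\end{equation*}
Taking the supremum over $f\in\Lipxy$ and using $\la>0$ to turn the supremum into an infimum yields
\begin{equation*}
\wKD(x,y)=d(x,y)-\la\inf_{f\in\Lipxy}\kakko(\psf,\dex-\dey),
\end{equation*}
hence, after the normalization in \autoref{IKTU-type curv},
\begin{equation*}
\frac{1}{\la}\left(1-\frac{\wKD(x,y)}{d(x,y)}\right)=\frac{1}{d(x,y)}\inf_{f\in\Lipxy}\kakko(\psf,\dex-\dey).
\end{equation*}
So the theorem is equivalent to the identity $\dis\lim_{\la\dto 0}\inf_{f\in\Lipxy}\kakko(\psf,\dex-\dey)=\inf_{f\in\Lipxy}\kakko(\LL^0f,\dex-\dey)$.

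One inequality is soft. For every fixed $g\in\Lipxy$ the pointwise convergence $\psi_\la g\to\LL^0g$ gives
\begin{equation*}
\limsup_{\la\dto 0}\ \inf_{f\in\Lipxy}\kakko(\psf,\dex-\dey)\ \le\ \lim_{\la\dto 0}\kakko(\psi_\la g,\dex-\dey)\ =\ \kakko(\LL^0g,\dex-\dey),
\end{equation*}
and taking the infimum over $g$ shows $\limsup_{\la\dto 0}\inf_{f}\kakko(\psf,\dex-\dey)\le\inf_{f}\kakko(\LL^0f,\dex-\dey)$, which is consistent with (indeed re-proves) the known bound $\kxy\le\CC(x,y)$. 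The reverse inequality
\begin{equation*}
\liminf_{\la\dto 0}\ \inf_{f\in\Lipxy}\kakko(\psf,\dex-\dey)\ \ge\ \inf_{f\in\Lipxy}\kakko(\LL^0f,\dex-\dey)
\end{equation*}
is exactly what \autoref{liminf conj} asserts. Granting it, the two bounds sandwich the limit, which therefore exists and equals $d(x,y)\,\CC(x,y)$; dividing by $d(x,y)$ gives $\kxy=\CC(x,y)$, and in particular the limit defining $\kxy$ exists.

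The whole difficulty is packed into this last inequality, i.e. into \autoref{liminf conj}, and the contribution of the theorem is only the bookkeeping above. The delicate point is that, although $\psf\to\LL^0f$ holds for each individual $f$, the minimal section $f\mapsto\LL^0f$ of the multi-valued operator $\LL$ is in general discontinuous, so one cannot a priori exchange $\inf_{f\in\Lipxy}$ with $\lim_{\la\dto 0}$: the infimum could be realized along a family $f_\la$ that stays ahead of the convergence. Eliminating this possibility is exactly what a uniform-convergence hypothesis provides --- in the language of the abstract, the resolvent converging to the identity at the linear rate uniformly on $\Lipxy$. I would not expect to discharge \autoref{liminf conj} unconditionally without instead establishing some lower-semicontinuity or compactness property of $f\mapsto\kakko(\LL^0f,\dex-\dey)$ on $\Lipxy$ strong enough to push the infimum through the limit; that is the step that remains open.
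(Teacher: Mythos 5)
Your argument is correct and reaches the paper's conclusion, but it closes the proof by a somewhat different (and arguably cleaner) route. Both proofs begin from the same identity $\kakko(\Jf,\dex-\dey)=d(x,y)-\la\,\kakko((f-\Jf)/\la,\dex-\dey)$ for $f\in\Lipxy$, and both obtain $\kxy\le\CC(x,y)$ ``softly'' by evaluating the infimum at a fixed $g$ (this is the paper's inequality \eqref{C>Kxy}). The difference is in the lower bound: the paper evaluates at a $\la$-weak Kantorovich potential $f_\la\in\tLipxy$, invokes the constancy of $\kakko(\LL^0f_\la,\dex-\dey)=\Kxy$ for small $\la$ (the analogue of \autoref{key remark}, resting on \cite[Proposition A.4]{IKTU}), deduces $d(x,y)\kxy\ge\Kxy$ from \autoref{liminf conj}, and closes the loop using $f_\la\in\Lipxy$; this detour through the potential is also what yields \autoref{main corollary}. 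You instead split $\kakko((f-\Jf)/\la,\dex-\dey)=\kakko(\LL^0f,\dex-\dey)+\kakko(\psf,\dex-\dey)/\la$ (in the paper's notation, where $\psf=f-\Jf-\la\LL^0f$) and bound the infimum of the sum from below by the sum of the infima, so that \autoref{liminf conj} kills the error term; this avoids both the existence of the potential and the constancy result, and gives the existence of the limit defining $\kxy$ as a by-product rather than by citation. Two small points of care, neither of which is a gap: first, your $\psf\:=(f-\Jf)/\la$ is not the function appearing in \autoref{liminf conj}, so the reverse inequality is not literally ``what the conjecture asserts'' but follows from it via the superadditivity $\inf_f(a_f+b_f^\la)\ge\inf_f a_f+\inf_f b_f^\la$; second, the conjecture's infimum runs over $\tLip$ while yours runs over $\Lipxy$, so one should record that $\kakko(\psf,\dex-\dey)$ is unchanged under adding constants to $f$, whence $\inf_{f\in\Lipxy}\kakko(\psf,\dex-\dey)=\inf_{f\in\tLipxy}\kakko(\psf,\dex-\dey)\ge\inf_{f\in\tLip}\kakko(\psf,\dex-\dey)$, which is the direction you need (the paper performs the same normalization in \autoref{K < C}).
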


By the proof of \autoref{main theorem}, we know that if \autoref{liminf conj} is true, then $\la$-weak Kantorovich potentials of $\wKD(x,y)$ are minimizers of $\mathcal{C}(x,y)$ (\autoref{main corollary}).

It is difficult to calculate the IKTU curvature, and there are only a few concrete examples where it has been calculated (\cite[Examples 6.1 and 6.4]{IKTU}).
On the other hand, the calculation of $\CC(x,y)$ can be somewhat simpler than that of the IKTU curvature thanks to the following property.
\begin{thm}[see \autoref{main theorem 2} and \autoref{flow of C}] \label{intro thm}
Let $E=\{e_V,e\}$, where $e_V$ is a hyperedge including all vertices.
In addition, let $x,y\in V$ and $f\in\Lipxy$.
If $f$ is a minimizer of $\CC(x,y)$, then $\kakko(f,\dev)=\kakko(f,\dex)$ or $\kakko(f,\dev)=\kakko(f,\dey)$ holds for every $v\in V$.
In particular, we have
\begin{equation*}
\CC(x,y)
=
\min_{f\in\LIPxy}\kakko(\LL^0f,\dex-\dey), 
\end{equation*}
where
\begin{equation*}
\LIPxy
\:=
\Big\{f\in\Lipxy\;\Big|\;\kakko(f,\delta_v)=\kakko(f,\dex)\mathrm{\;or\;}\kakko(f,\delta_v)=\kakko(f,\dey)\mathrm{\;holds\;for\;all\;}v\in V\Big\}.
\end{equation*}
\end{thm}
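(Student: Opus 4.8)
The plan is to exploit the very special shape of this hypergraph: since $e_V$ contains every vertex, any two distinct vertices lie in a common hyperedge, so the graph distance is identically $1$ on distinct pairs — in particular $d(x,y)=1$, which is exactly what reconciles the two displayed formulas — and $E$ contains only the one further hyperedge $e$. The first thing I would record is that every competitor is squeezed from both sides: for $f\in\Lipxy$ and any $v\in V$, applying the defining inequality of $\Lip$ to the pairs $(x,v)$ and $(v,y)$ and using $\kakko(f,\dex-\dey)=d(x,y)=1$ yields $\kakko(f,\dey)\le\kakko(f,\dev)\le\kakko(f,\dex)$. Hence $x$ attains the maximum and $y$ the minimum of $v\mapsto\kakko(f,\dev)$; in particular, along the hyperedge $e_V$ the set of maximizers of $f$ contains $x$, the set of minimizers contains $y$, and the discrepancy of $f$ along $e_V$ equals $1$.

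Next I would write $\kakko(\LL^0f,\dex-\dey)$ out for this hypergraph using \autoref{canonical}: $\LL^0 f$ splits as a contribution from $e_V$ plus a contribution from $e$, each a degree-weighted difference of a probability measure on the maximizers and one on the minimizers of $f$ along that hyperedge, scaled by the corresponding discrepancy, with the measures chosen so that $\LL^0f$ is the minimal-norm selection from $\LL f$. Pairing with $\dex-\dey$ and using the squeezing, the objective $\kakko(\LL^0 f,\dex-\dey)$ turns out to depend on $f$ only through the level sets $A:=\{v:\kakko(f,\dev)=\kakko(f,\dex)\}\ni x$ and $B:=\{v:\kakko(f,\dev)=\kakko(f,\dey)\}\ni y$, the extremizer sets of $f$ along $e$, and the discrepancy $s_e\le 1$ of $f$ along $e$. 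The point I would extract is twofold: the $e_V$-part of $\kakko(\LL^0f,\dex-\dey)$ is nonnegative and \emph{strictly} decreases when $A$ or $B$ is enlarged (all else fixed), because enlarging, say, $A$ increases the degree-weighted mass over which the maximizing measure on $e_V$ may spread; and the $e$-part vanishes unless $x$ or $y$ is an extremizer of $f$ along $e$, and is otherwise monotone in $s_e$.

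The heart of the proof is a flattening step. Suppose $f$ is a minimizer of $\CC(x,y)$ but the intermediate set $C:=V\setminus(A\cup B)$ is nonempty; I would exhibit $\tilde f\in\Lipxy$ with strictly smaller objective. First move every vertex of $C\setminus e$ to the level $\kakko(f,\dex)$: this does not touch $e$ at all, and if $C\setminus e\neq\varnothing$ it already lowers the objective strictly by the previous paragraph, so I may assume $C\subseteq e$. Now compare $\max_{v\in e}\kakko(f,\dev)$ with $\kakko(f,\dex)$. If they coincide (i.e.\ $e$ already reaches the top level), move all of $C$ up to $\kakko(f,\dex)$: the discrepancy $s_e$ does not increase, the maximizing measure along $e$ spreads over a weakly larger set so the $e$-part does not increase, while $A$ strictly grows and the $e_V$-part strictly decreases. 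Otherwise $\max_{v\in e}\kakko(f,\dev)<\kakko(f,\dex)$, so this maximum is attained inside $C$; moving all of $C$ down to $\kakko(f,\dey)$ then makes $f$ constant along $e$, so the $e$-part drops to $0$ while $B$ strictly grows and the $e_V$-part strictly decreases. In every case $\tilde f\in\Lipxy$: the $\Lip$ inequalities are automatic since all weighted values stay in an interval of length $d(x,y)=1$, and $\kakko(\tilde f,\dex-\dey)=\kakko(f,\dex-\dey)=1$ is untouched because $x,y$ are not moved. This contradicts minimality, so $C=\varnothing$, which is exactly the claimed dichotomy.

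Finally, the displayed identity follows: $\LIPxy\subseteq\Lipxy$ together with $d(x,y)=1$ gives $\min_{f\in\LIPxy}\kakko(\LL^0f,\dex-\dey)\ge\CC(x,y)$, while the infimum defining $\CC(x,y)$ is attained — the objective depends on $f$ only through finitely many discrete data together with the number $s_e\in[0,1]$ — and by the flattening step every minimizer lies in $\LIPxy$, giving the reverse inequality. I expect the flattening step, and within it the precise behaviour of $\kakko(\LL^0f,\dex-\dey)$ (in particular its strict monotonicity in $A$ and $B$) when the extremizer sets of $e$ overlap $A$ or $B$, so that the minimal-norm selection in $\LL^0$ couples the two hyperedges, to be the main obstacle; a careful bookkeeping of the two contributions via the explicit form of $\LL^0$ from \autoref{canonical} should settle it.
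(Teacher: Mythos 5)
Your high-level strategy is the same as the paper's: assume a minimizer $f$ has a nonempty intermediate level set $C$ and exhibit a perturbation $\tilde f\in\Lipxy$ with strictly smaller objective. Your observation that $d(x,y)=1$ and the resulting squeezing $\kakko(f,\dey)\le\kakko(f,\dev)\le\kakko(f,\dex)$ is correct and matches the paper's \eqref{max:x min:y}. The problem is that the engine of your argument --- the claim that the ``$e_V$-part'' of $\kakko(\LL^0f,\dex-\dey)$ is strictly decreasing when $A$ or $B$ is enlarged, with the ``$e$-part'' controlled separately --- is exactly the content you defer to ``careful bookkeeping,'' and it is not true in the generality you use it. The decomposition into an $e_V$-part and an $e$-part is not well defined before you solve the minimal-norm selection problem defining $\LL^0f$, which is a \emph{joint} constrained quadratic minimization over the flow distributions on both hyperedges whenever the extremizer sets of $f$ on $e$ meet $A$ or $B$. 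When you solve it (as the paper does in \autoref{key property}, with the parameters $p,q,r,s,t,u$ and the derivative computation \eqref{iii-3 condition}), you find two regimes: an interior optimum, where your monotonicity holds, and a boundary optimum (condition \eqref{iii-3-2 condition}), where the $e_V$-flow assigned to the vertices of $e$ at the top level is zero. In that boundary regime, if $x\in e$, the value $\kakko(\LL^0f,\dex)$ is governed entirely by the $e$-flow and does \emph{not} change when you enlarge $A$ by vertices outside $e$; so your first move (pushing $C\setminus e$ up to the top level) does not strictly improve the objective, and your argument cannot conclude. The paper escapes this by showing that in the boundary regime $f$ cannot be a minimizer for a different reason, via a different perturbation (flattening $e$ down to the $y$-level, Step~4 of case \textbf{(A)} and case \textbf{(D)} in the proof of \autoref{key property}); your three flattening moves do not include this one in the configuration where it is needed.

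A second, related omission: you never establish the analogue of \autoref{key property}, namely that at a minimizer $\LL^0f$ is constant on each level set, nor of \autoref{key cor}, that the extremes of $f$ on $e$ sit at the two levels and the discrepancy on $e$ is $0$ or $1$. These are what make the assertion ``the objective depends on $f$ only through the level sets and $s_e$'' legitimate, and they are what the final formula over $\LIPxy$ in \autoref{flow of C} actually rests on (via \autoref{grad lemma} and the explicit value $\mathsf{T}$ in the proof of \autoref{main theorem 2}). So the skeleton of your proof is the right one, but the case analysis of the coupled minimal-norm selection --- which is where essentially all of the paper's work lies --- is missing, and at least one of your claimed strict inequalities fails without it.
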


By \autoref{intro thm}, we can restrict the values of $f\in\Lipxy$ at vertices other than $x,y$ to two types in hypergraphs with this structure.
Hence, \autoref{intro thm} reduces the computation of $\CC(x,y)$ to a simple minimization problem.
Although we restrict ourselves to specific hypergraphs as in \autoref{intro thm}, the properties we establish in \autoref{calculation section 4} would have some generalizations to general hypergraphs. 
We believe that our arguments will be helpful for the further development of the theory of hypergraph Laplacian as well as IKTU and $\wIKTU$ curvatures.

This paper is organized as follows. 
In \autoref{Preliminaries}, we review  two types of discrete Ricci curvature, the LLY curvature of graphs and the IKTU curvature of hypergraphs.
In \autoref{IKTU conj subsection}, we discuss \autoref{IKTU conj} and the associated analysis.
In \autoref{wKD subsection} and \autoref{wIKTU subsection}, we introduce the weak Kantorovich difference and the $\wIKTU$ curvature, respectively.
In \autoref{tejun}, we discuss several useful properties for calculating $\CC(x,y)$.
In \autoref{1-regular subsection} and \autoref{|E|=2 subsection}, we describe concrete values of $\CC(x,y)$ of $1$-regular hypergraphs and hypergraphs consisting of 2 hyperedges, respectively.
In \autoref{calculation section}, we compute $\CC(x,y)$ for hypergraphs in \autoref{|E|=2 subsection}.

\bigskip
\textbf{Acknowledgements.} 
I would like to thank my supervisor Shin-ichi Ohta for his fruitful advice and discussions. 
I also would like to thank Yu Kitabeppu  for his valuable comments.
The author is supported by JST SPRING, Grant Number JPMJSP2138.

\bigskip
\textbf{Convention.} 
We denote the set of all positive and non-negative integers by $\N$ and $\N_0$, i.e., $\N\:=\Z_{\geq1}$ and $\N_{0}\:=\Z_{\geq0}$, respectively.
For $a\in\R$, denote by $\lfloor a\rfloor$ the largest integer less than or equal to $a$, and by $\lceil a\rceil$ the smallest integer greater than or equal to $a$.

\section{Preliminaries} \label{Preliminaries}

In this section, we briefly review the Ricci curvature of graphs in \cite{LLY} (\autoref{LLY subsection}) and of hypergraphs in \cite{IKTU} (\autoref{IKTU subsection}).

\subsection{Hypergraphs}

We first review hypergraphs.
A \emph{(weighted) hypergraph} $H=(V,E,w)$ is a triplet of a \emph{vertex set} $V$, a \emph{hyperedge set} $E$ and a \emph{(hyperedge) weight} $w:E\to\R_{>0}$.
A hyperedge $e\in E$ is a subset of $V$, called a \emph{(self-)loop} if $\#e=1$ and an \emph{edge} if $\#e=2$.
In particular, we call $H$ a \emph{graph} if $\#e=2$ holds for any $e\in E$.
We denote $w_e=w(e)$ for a hyperedge $e\in E$.
We say that $H$ has \emph{multi-hyperedges} if different hyperedges $e_1,e_2$ coincide as subsets of $V$.
Our hypergraphs will have no multi-hyperedges, unless otherwise noted.
In addition, our hypergraphs will be \emph{finite}, i.e. the number of vertices is finite, and let $\#V=n$.
We denote the set of all functions on the vertex set $V$ by $\rv$ and identify it with the $n$-dimensional Euclidean space.
For this reason, we often consider $f\in\rv$ as a $1\times n$ matrix.

\begin{defi} \label{clique expansion}
For a hypergraph $H=(V,E,w)$, we define the following:
\begin{itemize}
\item $x,y\in V$ are \emph{adjacent} if there exists a hyperedge $e$ such that $x,y\in e$, and the adjacency of $x,y$ is denoted by $x\sim y$.
\item 
We define the function $d:V\times V\to\R_{\ge0}$ as
\begin{align*}
d(x,y) 
\:= \min\{n\in\N_0 \;|\; x=v_0\sim\cdots\sim v_n=y \}.
\end{align*}
In this paper, we will deal only with \emph{connected} hypergraphs, i.e. $d(x,y)<\infty$ holds for all $x,y\in V$.
Then, $d$ is a distance function on the vertex set $V$, so that we can consider a hypergraph $H$ as a metric space $(V,d)$.
This distance $d$ is called the \emph{(hyper)graph distance}.
\item For $v\in V$, define $E_v\:=\{e\in E\,|\,e\ni v\}$.
\item The \emph{weighted degree} $d_v$ of $v\in V$ is defined as $d_v\:=\sum_{e\in E_v}w_e$.
\item The \emph{diameter} $\diam(H)$ of $H$ is defined as $\diam(H)\:=\max_{x,y\in V}d(x,y)$.
\item The \emph{volume} $\vol(H)$ of $H$ is defined as $\vol(H)\:=\sum_{v\in V}d_v$.
\item
The \emph{clique expansion graph} associated with $H$ is the graph $G=(V,E_G)$
\footnote{We do not define a weight of a clique expansion since its definition is not unique.}
such that $(x,y)\in E_G$ if and only if $x\neq y$ and $(x,y)\in e$ for some $e\in E$.
We remark that the clique expansion graphs of different hypergraphs can coincide.
\end{itemize}
\end{defi}

\subsection{Ricci curvature of graphs} \label{LLY subsection}

Let $G=(V,E,w)$ be a (weighted) finite graph\footnote{Since we consider only finite hypergraphs in this paper, our graphs will also be finite throughout the discussion of graphs.}.
We first define the transition probability measure at each vertex.
We denote the set of all probability measures on $V$ by $\PP(V)$.

\begin{defi} \label{random walk}
For each vertex $x\in V$ and $\la\in[0,1]$, we define a probability measure $\rwx\in\PP(V)$ as follows:
\begin{equation*}
\dis \rwx(y) 
\:=
\left\{
\begin{aligned}
&\;1-\la & (y=x) , \vspace{1mm} \\
&\dis \;\la\cdot\frac{w_{xy}}{d_x} & (y\sim x) , \vspace{1mm} \\
&\;0 & (\text{otherwise}).
\end{aligned}
\right. 
\end{equation*}
\end{defi}

The LLY curvature along vertices $x,y$ is defined by comparing the graph distance $d(x,y)$ between $x$ and $y$ and the $L^1$-Wasserstein distance between the transition probability measures $\rwx$ and $\rwy$ at them.

\begin{defi}[$L^1$-Wasserstein distance]
We define the \emph{$L^1$-Wasserstein distance} $W_1(\mu,\nu)$ between $\mu,\nu\in\PP(V)$ as 
\begin{equation}
W_1(\mu,\nu) 
\:=
\sup \left\{ \sum_{v\in V}f(v) \big( \mu(v)-\nu(v) \big) 
\;\middle|\;
f\in\mathsf{Lip}^1(V) \right\}, \label{W1-dist}
\end{equation}
where $\mathsf{Lip}^1(V)$ is the set of all $1$-Lipschitz functions on $(V,d)$.
\end{defi}

\begin{rem}
The $L^1$-Wasserstein distance $W_1$ is originally defined by the minimization of the transport cost and equality \eqref{W1-dist} holds by the Kantorovich--Rubinstein duality of $W_1$.
In this paper, we do not discuss optimal transport theory and will utilize the form of \eqref{W1-dist} later on, so we defined $W_1$ by \eqref{W1-dist}.
\end{rem}

\begin{defi}[Lin--Lu--Yau curvature of graphs {\cite{LLY}}]
The \emph{LLY curvature} $\kLLYxy$ along two vertices $x,y$ is defined as
\begin{equation*}
\kLLYxy 
\:=
\lim_{\la\dto0} \frac{1}{\la} \left(\dis 1-\frac{W_1\big(\rwx,\rwy\big)}{d(x,y)} \right).
\end{equation*}
The limit in the right-hand side exists.
\end{defi}

\subsection{Ricci curvature of hypergraphs} \label{IKTU subsection}

In this subsection, we explain how to generalize the LLY curvature of graphs to hypergraphs.
As discussed in \autoref{LLY subsection}, the definition of the LLY curvature of graphs uses $W_1\big(\rwx,\rwy\big)$, i.e. the transition probability measures and the optimal transport cost on graphs.
However, on the one hand, random walks and optimal transport problem on hypergraphs are nontrivial; 
especially it is difficult to distinguish a hypergraph with its clique expansion from these respects.
On the other hand, via the \emph{graph Laplacian}
\begin{equation*}
\Delta:\rv\to\rv;\quad
\Delta f(x)\:=\frac{1}{d_x}\sum_{y\sim x}w_{xy}\big(f(x)-f(y)\big),
\end{equation*}
we have a deformation
\begin{equation}
W_1\big(\rwx,\rwy\big) 
= 
\sup_{f\in\mathsf{Lip}^1(V)}\big\langle(I-\la\Delta)f,\dex-\dey\big\rangle
= 
\sup_{f\in\mathsf{Lip}^1(V)}\big\langle(I+\la\Delta)^{-1}f,\dex-\dey\big\rangle+o(\la), \label{wxy formula}
\end{equation}
where $I:\rv\to\rv$ is the identity operator, $\dex\in\rv$ is the characteristic function at $x\in V$ and $\langle\cdot,\cdot\rangle$ is the canonical inner product on $\rv$, i.e. 
\begin{equation*}
\dis \dex(v) \:=
\left\{
\begin{aligned}
&\;1 & (v=x) , \vspace{1mm} \\
&\;0 & (v\neq x),
\end{aligned}
\right. 
\quad
\text{and}
\quad
\langle f_1,f_2\rangle\:=\sum_{v\in V}f_1(v)f_2(v).
\end{equation*}
We focus on the main term of \eqref{wxy formula} to consider the limit $\la\dto0$ in the study of the LLY curvature $\kLLYxy$.
On a hypergraph, Ikeda--Kitabeppu--Takai--Uehara \cite{IKTU} utilized this idea to give an appropriate modification of $W_1\big(\rwx,\rwy\big)$, and the function on $V\times V$ thus obtained is a distance function on a hypergraph.
This new distance function on a hypergraph is called the \emph{Kantorovich difference}.
Then, the LLY curvature is generalized to hypergraphs by comparing the Kantorovich difference to the (hyper)graph distance.

\subsubsection{Kantorovich difference}

First, we recall the definition of the hypergraph Laplacian and its basic properties to define the Kantorovich difference.
We refer to \cite{IKTU} for further details.

\begin{defi}[Weighted inner product] \label{weighted inner product}
We define the \emph{weighted inner product} $\kakko(\cdot,\cdot):\rv\times\rv\to\R$ and the \emph{norm} $\|\cdot\|_{\Di}:\rv\to\R_{\ge0}$ on $\rv$ as 
\begin{equation*}
\kakko(f,g) 
\:=
\sum_{v\in V}\frac{f(v)g(v)}{d_v},
\quad
\|f\|_{D^{-1}}\:=\sqrt{\kakko(f,f)}.
\end{equation*}
\end{defi}

Note that $\big(\rv,\kakko(\cdot,\cdot)\big)$ is a Hilbert space.
We define the base polytope of a hyperedge for the definition of the hypergraph Laplacian.

\begin{defi}[Base polytopes of hyperedges] \label{conv}
For a hyperedge $e$, we define its \emph{base polytope} $\mathsf{B}_e\subset\rv$ as
\begin{equation*}
\mathsf{B}_e
\:=
\conv(\{\dex-\dey\;|\;x,y\in e\}),
\end{equation*}
where $\conv(A)$ is the \emph{convex hull} of $A\subset\rv$ in $\rv$.
\end{defi}

\begin{defi}[Hypergraph Laplacian] \label{LL}
We define the \emph{(normalized) hypergraph Laplacian} by the multi-valued operator
\begin{equation*}
\LL:\rv\to\01;\quad
\LL f=\LL(f)\:=\left\{ \sum_{e\in E}w_e\big\langle f,\flow(e)\big\rangle_{\Di}\, \flow(e) \;\middle|\; \flow(e)\in\argmax_{\mathsf{b}\in \mathsf{B}_e}\kakko(f,\mathsf{b})\right\}
\end{equation*}
on the Hilbert space $(\rv,\kakko(\cdot,\cdot))$.
\end{defi}

This hypergraph Laplacian is derived from the Laplacian introduced in a more general framework in \cite{Yo} (see also \cite{LM}).

\begin{rem}[Nonlinearity]
Note that, on the one hand, we do not necessarily have $\LL(f_1+f_2)=\LL(f_1)+\LL(f_2)$ for $f_1,f_2\in\rv$.
On the other hand, $\LL(cf)=c\LL(f)$ holds for any $c\in\R$ and $f\in\rv$ from the definition.
\end{rem}

It is known that $\LL$ is a \emph{maximal monotone operator} on the Hilbert space $\big(\rv,\kakko(\cdot,\cdot)\big)$, which implies the following.

\begin{prop}[{\cite[Lemma 2.15]{Mi}}] \label{closed convex}
For all $f\in\rv$, $\LL f\subset\rv$ is closed and convex.
\end{prop}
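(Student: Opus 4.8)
The plan is to rewrite $\LL f$ as a finite Minkowski sum of compact convex sets and to conclude by the standard stability of compactness and convexity under such sums. Fix $f\in\rv$. For each hyperedge $e\in E$, the base polytope $\mathsf{B}_e$ is, by \autoref{conv}, the convex hull of the finite set $\{\dex-\dey\mid x,y\in e\}$, hence a nonempty compact convex subset of $\rv$. The functional $\mathsf{b}\mapsto\kakko(f,\mathsf{b})$ is linear and continuous, so it attains a maximum $M_e\:=\max_{\mathsf{b}\in\mathsf{B}_e}\kakko(f,\mathsf{b})$ over $\mathsf{B}_e$, and the maximizing set
\[
A_e\:=\argmax_{\mathsf{b}\in\mathsf{B}_e}\kakko(f,\mathsf{b})=\mathsf{B}_e\cap\bigl\{\mathsf{b}\in\rv\mid\kakko(f,\mathsf{b})=M_e\bigr\}
\]
is the intersection of the compact convex set $\mathsf{B}_e$ with a hyperplane, hence itself a nonempty compact convex set.

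The key observation is that $\kakko(f,\cdot)$ is identically equal to $M_e$ on $A_e$. In \autoref{LL}, for each $e\in E$ the vector $\flow(e)$ ranges over $A_e$; writing $\mathsf{b}_e$ for such a choice, the corresponding summand is $w_e\kakko(f,\mathsf{b}_e)\,\mathsf{b}_e=w_eM_e\,\mathsf{b}_e$. Hence
\[
\LL f=\left\{\sum_{e\in E}w_eM_e\,\mathsf{b}_e\ \middle|\ \mathsf{b}_e\in A_e\ \text{for every}\ e\in E\right\}=\sum_{e\in E}w_eM_e\,A_e ,
\]
the Minkowski sum of the sets $w_eM_e\,A_e$. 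Each $w_eM_e\,A_e$ is the image of the compact convex set $A_e$ under the scalar map $\mathsf{b}\mapsto w_eM_e\,\mathsf{b}$, hence again compact and convex (for any real scalar, including $0$). Since $E$ is finite and a finite Minkowski sum of compact convex sets is compact and convex, $\LL f$ is a nonempty compact convex set; in particular it is closed and convex, which is the assertion.

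The only step that requires any care is the identity $\LL f=\sum_{e\in E}w_eM_e\,A_e$ — equivalently, the remark that the scalar coefficient $\big\langle f,\flow(e)\big\rangle_{\Di}$ in \autoref{LL} is independent of which maximizer $\flow(e)\in A_e$ is chosen; granting this, the conclusion reduces to elementary facts about polytopes, hyperplane sections and Minkowski sums, so I do not anticipate a genuine obstacle here. Alternatively, one may avoid this computation and deduce the proposition directly from the fact recalled just above, namely that $\LL$ is a maximal monotone operator on $\big(\rv,\kakko(\cdot,\cdot)\big)$, together with \cite[Lemma 2.15]{Mi}, which states that the value of a maximal monotone operator at every point of the underlying Hilbert space is closed and convex.
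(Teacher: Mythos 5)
Your argument is correct, but it takes a different route from the paper: the paper offers no computation at all, deducing the proposition solely from the general fact that $\LL$ is a maximal monotone operator on $\big(\rv,\kakko(\cdot,\cdot)\big)$ together with \cite[Lemma 2.15]{Mi} (precisely the alternative you mention in your last sentence). Your main argument is instead elementary and self-contained: you observe that the coefficient $\kakko(f,\flow(e))$ equals the constant $M_e$ on the whole face $A_e=\argmax_{\mathsf{b}\in\mathsf{B}_e}\kakko(f,\mathsf{b})$, so that $\LL f=\sum_{e\in E}w_eM_e\,A_e$ is a finite Minkowski sum of scaled faces of the base polytopes, hence compact and convex. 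This identity is immediate from the definition of $\argmax$, and each $A_e$ is indeed a nonempty compact convex set (the intersection of $\mathsf{B}_e$ with the level set $\{\kakko(f,\mathsf{b})=M_e\}$, which is a hyperplane or all of $\rv$). What your approach buys is independence from the nonlinear-semigroup machinery -- one need not first establish that $\LL$ is the subdifferential of the convex energy $\mathcal{E}$ and invoke Miyadera's lemma -- and it yields the strictly stronger conclusion that $\LL f$ is compact, in fact a polytope. What the paper's route buys is generality: it applies verbatim to any maximal monotone operator and requires no inspection of the specific combinatorial structure of $\LL$. Both proofs are valid.
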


\begin{defi}[Resolvent] \label{resolvent}
For $\la>0$, we define the \emph{resolvent operator} $J_\la:\rv\to\01$ as
\begin{equation*}
\Jf=J_\la(f)\:=(I+\la\LL)^{-1}(f).
\end{equation*}
\end{defi}

\begin{prop}[{\cite[Lemma 2.1]{IKTU}}]
The operator $J_\la$ is single-valued and continuous.
Moreover, $\LL$ is the subdifferential of the convex function 
\begin{equation*}
\mathcal{E}:\rv\to\R;\quad
f\mapsto\frac{1}{2}\sum_{e\in E}w_e\max_{x,y\in e}\kakko(f,\delta_x-\delta_y)^2,
\end{equation*}
and it follows that (see also \cite[Remark 2.4]{KM}):
\begin{equation*}
J_\la f=\argmin_{g\in\rv}\left\{ \frac{\|f-g\|_{D^{-1}}^2}{2\la}+\mathcal{E}(g) \right\}. 
\end{equation*}
\end{prop}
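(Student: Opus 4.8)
The plan is to derive all three assertions from the single identity $\LL=\partial\mathcal{E}$, where the subdifferential is taken in the Hilbert space $\big(\rv,\kakko(\cdot,\cdot)\big)$. Granting this identity, the single-valuedness and continuity of $\Jf$ together with the variational formula become standard facts about resolvents of subdifferentials (proximal maps). I would carry this out in three steps.

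First, I would put $\mathcal{E}$ in a form suited to subdifferentiation. For each hyperedge $e$ let $g_e(f)\:=\max_{\mathsf{b}\in\mathsf{B}_e}\kakko(f,\mathsf{b})$ be the support function of the base polytope $\mathsf{B}_e$ (\autoref{conv}). Since $\mathsf{B}_e$ is the convex hull of the generating set $\{\dex-\dey\mid x,y\in e\}$, which is symmetric under interchanging $x$ and $y$, the maximum of the linear functional $\kakko(f,\cdot)$ over $\mathsf{B}_e$ is attained at a generator, is nonnegative, and satisfies $\max_{x,y\in e}\kakko(f,\dex-\dey)^2=g_e(f)^2$. Hence $\mathcal{E}(f)=\tfrac12\sum_{e\in E}w_e\,g_e(f)^2$. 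Each $g_e$ is a support function, so it is sublinear, convex, and nonnegative; composing with the differentiable nondecreasing convex map $t\mapsto\tfrac{w_e}{2}(t^+)^2$, which agrees with $t\mapsto\tfrac{w_e}{2}t^2$ on the range $[0,\infty)$ of $g_e$, shows that each summand is convex, and therefore $\mathcal{E}$ is a finite convex function on $\rv$.

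Second, I would compute $\partial\mathcal{E}$ termwise. The subdifferential of a support function is its exposed face, i.e. $\partial g_e(f)=\argmax_{\mathsf{b}\in\mathsf{B}_e}\kakko(f,\mathsf{b})$. The chain rule applied to the outer function $t\mapsto\tfrac{w_e}{2}(t^+)^2$ then yields $\partial\big(\tfrac{w_e}{2}g_e^2\big)(f)=w_e\,g_e(f)\,\partial g_e(f)$; writing $g_e(f)=\big\langle f,\flow(e)\big\rangle_{\Di}$ for any maximizer $\flow(e)$, this set equals $\big\{\,w_e\big\langle f,\flow(e)\big\rangle_{\Di}\,\flow(e)\;\big|\;\flow(e)\in\argmax_{\mathsf{b}\in\mathsf{B}_e}\kakko(f,\mathsf{b})\,\big\}$. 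Because each summand $\tfrac{w_e}{2}g_e^2$ is finite everywhere, the sum rule $\partial\mathcal{E}=\sum_{e\in E}\partial\big(\tfrac{w_e}{2}g_e^2\big)$ holds with no constraint qualification, and the resulting Minkowski sum, in which each $\flow(e)$ ranges independently over its maximizing face, is precisely the set $\LL f$ of \autoref{LL}. This establishes $\partial\mathcal{E}=\LL$.

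Third, I would read off the resolvent formula. For fixed $f\in\rv$ and $\la>0$ the objective $\Phi(g)\:=\tfrac{1}{2\la}\|f-g\|_{\Di}^2+\mathcal{E}(g)$ is strictly convex and coercive, hence has a unique minimizer $g^\star$ characterized by $0\in\partial\Phi(g^\star)=\tfrac1\la(g^\star-f)+\partial\mathcal{E}(g^\star)$, equivalently $f\in g^\star+\la\LL g^\star=(I+\la\LL)(g^\star)$; thus $g^\star=\Jf$, which simultaneously gives single-valuedness of $\Jf$ and identifies it with the minimizer in the displayed formula, while continuity follows from the firm nonexpansiveness of the resolvent of the maximal monotone operator $\LL$. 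The main obstacle lies in the termwise subdifferential computation of the second step: one must treat the square of the nondifferentiable support function $g_e$ with care, especially the degenerate case $g_e(f)=0$, where $\partial\big(\tfrac{w_e}{2}g_e^2\big)(f)=\{0\}$ regardless of the (possibly large) maximizing face, and verify that the Minkowski sum $\sum_{e\in E}\partial\big(\tfrac{w_e}{2}g_e^2\big)(f)$, formed with independently chosen maximizers $\flow(e)$, reproduces $\LL f$ exactly rather than a proper subset or superset. Once $\partial\mathcal{E}=\LL$ is secured, the remaining steps are routine convex analysis.
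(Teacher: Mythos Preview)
The paper does not supply its own proof of this proposition; it is quoted from \cite[Lemma 2.1]{IKTU} (with the variational formula attributed to \cite[Remark 2.4]{KM}) and used as a black box. Your proposal is a correct, self-contained convex-analytic derivation: recognizing each summand of $\mathcal{E}$ as the square of the support function of the symmetric polytope $\mathsf{B}_e$, computing $\partial\mathcal{E}$ termwise via the chain rule and the Moreau--Rockafellar sum rule, and then invoking the standard identification of the resolvent of a subdifferential with the proximal map. The handling of the degenerate case $g_e(f)=0$ is accurate, and the observation that the Minkowski sum over independently varying maximizers $\flow(e)$ reproduces $\LL f$ exactly is the right point to check. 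Since the paper offers no argument to compare against, there is nothing further to contrast; your proof stands on its own and is the expected one.
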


\begin{defi}[Weighted $1$-Lipschitz function] \label{weighted 1-Lip}
We say that $f\in\rv$ is \emph{weighted $1$-Lipschitz} if it satisfies $\kakko(f,\dex-\dey)\le d(x,y)$ for all $x,y\in V$.
We denote the set of all weighted $1$-Lipschitz functions on $V$ by $\Lip$ and define its subset $\tLip$ as
\begin{equation*}
\tLip\:=\left\{ f\in\Lip \;\middle|\; \max_{v\in V}\kakko(f,\delta_v)\le\diam(H) \right\}.
\end{equation*}
\end{defi}

\begin{defi}[$\la$-Kantorovich difference] \label{KD-def}
Let $\la>0$.
We define the \emph{$\la$-Kantorovich difference} $\KD(x,y)$ between two vertices $x,y$ as
\begin{equation}
\KD(x,y)\:=\sup_{f\in\Lip}\kakko(J_\la f,\dex-\dey). \label{eq-KD-def}
\end{equation}
\end{defi}

There exists $f_\la\in\Lip$ attaining the supremum in the right-hand side for any $\la>0$ (\cite[Proposition 3.7]{IKTU}).
Then $f_\la$ is called a \emph{$\la$-Kantorovich potential} of $\KD(x,y)$.
We remark that a $\la$-Kantorovich potential is not always unique.

\begin{prop}[{\cite[Propositions 3.5 and 3.3]{IKTU}}]
For any $\la>0$, the following hold:
\begin{itemize}
\item[$(1)$] The $\la$-Kantorovich difference is a distance function on $V$.
\item[$(2)$] The supremum in the right-hand side of \eqref{eq-KD-def} is unchanged when we restrict $\Lip$ to $\tLip$.
\end{itemize}
\end{prop}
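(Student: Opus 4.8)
The plan is to reduce both parts to a few structural facts about the resolvent, after a change of viewpoint. I would first identify each $f\in\rv$ with the function $v\mapsto\kakko(f,\delta_v)=f(v)/d_v$; under this identification, $f\in\Lip$ exactly when $v\mapsto\kakko(f,\delta_v)$ is $1$-Lipschitz on $(V,d)$ in the usual sense, $\kakko(f,\dex-\dey)$ is the corresponding difference of values, and $\tLip$ is the subset on which these values are bounded above by $\diam(H)$. The main computation is the invariance of the problem under adding multiples of $h_0\in\rv$, $h_0(v)\:=d_v$: since $\kakko(h_0,\dex-\dey)=d_x/d_x-d_y/d_y=0$, the vector $h_0$ is $\Di$-orthogonal to every base polytope $\mathsf{B}_e$, hence $\argmax_{\mathsf{b}\in\mathsf{B}_e}\kakko(f+ch_0,\mathsf{b})=\argmax_{\mathsf{b}\in\mathsf{B}_e}\kakko(f,\mathsf{b})$ and $\LL(f+ch_0)=\LL(f)$ for all $c\in\R$, which yields $J_\la(f+ch_0)=\Jf+ch_0$. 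I will also use two facts immediate from the definitions: the homogeneity $J_\la(tf)=t\,\Jf$ for every $t\in\R$ (from $\LL(tf)=t\LL(f)$), and that $J_\la$ is a bijection of $\rv$ (as $I+\la\LL$ is, by maximal monotonicity of $\LL$).

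For $(2)$, given $f\in\Lip$ put $c\:=-\kakko(f,\dey)$ and $f'\:=f+ch_0$. Then $\kakko(f',\delta_v)=\kakko(f,\delta_v)-\kakko(f,\dey)$ defines a $1$-Lipschitz function, so $f'\in\Lip$, and $\kakko(f',\delta_v)\le d(v,y)\le\diam(H)$ for all $v$, so $f'\in\tLip$; moreover $\kakko(J_\la f',\dex-\dey)=\kakko(\Jf,\dex-\dey)+c\,\kakko(h_0,\dex-\dey)=\kakko(\Jf,\dex-\dey)$. Thus every competitor in \eqref{eq-KD-def} has a counterpart in $\tLip$ with the same objective value, so $\sup_{\Lip}\le\sup_{\tLip}$; the reverse inequality is trivial since $\tLip\subseteq\Lip$, giving equality. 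Intersecting $\tLip$ further with $\{\kakko(f,\dey)=0\}$ gives a compact set, which also yields finiteness of $\KD(x,y)$ and the existence of a $\la$-Kantorovich potential.

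For the metric axioms in $(1)$: nonnegativity holds because $0\in\Lip$ and $J_\la 0=0$, and $\KD(x,x)=0$ because $\kakko(\Jf,\dex-\dex)=0$; symmetry holds because $\Lip$ is invariant under $f\mapsto-f$ while $J_\la$ is odd ($\LL(-f)=-\LL(f)$), so $f\mapsto-f$ interchanges $\kakko(\Jf,\dex-\dey)$ with $\kakko(\Jf,\dey-\dex)$; and the triangle inequality follows by writing $\kakko(\Jf,\dex-\dez)=\kakko(\Jf,\dex-\dey)+\kakko(\Jf,\dey-\dez)\le\KD(x,y)+\KD(y,z)$ for each $f\in\Lip$ and taking the supremum.

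The remaining point, which I expect to be the crux, is positive definiteness: $\KD(x,y)>0$ whenever $x\neq y$. I would argue by contradiction: if $\KD(x,y)=0$ then $\kakko(\Jf,\dex-\dey)\le0$ for all $f\in\Lip$, and applying this to $-f\in\Lip$ together with oddness of $J_\la$ forces $\kakko(\Jf,\dex-\dey)=0$ for all $f\in\Lip$. Since $\Lip$ contains a neighbourhood of the origin in $\rv$ (if $|f(v)|<d_v/2$ for all $v$ then $|\kakko(f,\delta_u-\delta_v)|<1\le d(u,v)$), homogeneity of $J_\la$ propagates this vanishing to all of $\rv$; as $J_\la$ is surjective, we get $\kakko(h,\dex-\dey)=0$ for every $h\in\rv$, contradicting $\kakko(\dex,\dex-\dey)=1/d_x>0$. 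The delicate part is precisely making sure that the two global properties of the resolvent used here, positive homogeneity and surjectivity, genuinely hold for the multivalued Laplacian $\LL$, so that the merely local vanishing of $f\mapsto\kakko(\Jf,\dex-\dey)$ forces its global vanishing; once that is in hand, the rest of $(1)$ is formal.
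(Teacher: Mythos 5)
The paper does not actually prove this proposition: it is imported from \cite[Propositions 3.3 and 3.5]{IKTU} without proof, so there is no internal argument to compare against. Judged on its own, your proof is correct and self-contained. The two structural facts you flag as the crux do hold: positive homogeneity $J_\la(tf)=tJ_\la f$ for every $t\in\R$ follows from $\LL(cf)=c\LL(f)$ (stated in the paper's remark on nonlinearity; for $c<0$ it uses that each base polytope $\mathsf{B}_e$ is symmetric under $\mathsf{b}\mapsto-\mathsf{b}$), and surjectivity of $J_\la$ holds because $\LL g\neq\emptyset$ for every $g$ (each $\mathsf{B}_e$ is compact), so $J_\la(g+\la g')=g$ for any $g'\in\LL g$. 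One small correction: $J_\la$ need not be injective --- if $\LL g$ is not a singleton, every $f\in g+\la \LL g$ satisfies $J_\la f=g$ --- so ``bijection'' should be ``surjection''; surjectivity is all your argument uses. The translation mechanism with $h_0(v)=d_v$, which is $\Di$-orthogonal to every $\mathsf{B}_e$ and hence satisfies $\LL(f+ch_0)=\LL(f)$ and $J_\la(f+ch_0)=J_\la f+ch_0$, is exactly the right tool for part $(2)$, and your contradiction argument for positive definiteness (oddness of $J_\la$, homogeneity, the fact that $\Lip$ contains a ball around the origin, and surjectivity) cleanly yields $\KD(x,y)>0$ for \emph{every} $\la>0$, which is notable since the analogous statement the paper proves for $\wKD$ only gives positive definiteness for sufficiently small $\la$ via $\lim_{\la\dto0}J_\la f=f$.
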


\subsubsection{Ikeda--Kitabeppu--Takai--Uehara curvature}

\begin{defi}[Ikeda--Kitabbepu--Takai--Uehara curvature of hypergraphs \cite{IKTU}]
The \emph{IKTU curvature} $\kIKTUxy$ along two vertices $x,y$ is defined as
\begin{equation*}
\kIKTUxy\:=\lim_{\la\dto0}\frac{1}{\la}\left( 1-\frac{\KD(x,y)}{d(x,y)} \right).
\end{equation*}
The limit in the right-hand side exists (\cite[Theorem A.1]{IKTU}).
\end{defi}

The LLY curvature and the IKTU curvature coincide for graphs.

\begin{thm}[{\cite[Proposition 4.1]{IKTU}}]
When $H$ is a graph, $\kLLYxy=\kIKTUxy$ holds for any vertices $x,y$.
\end{thm}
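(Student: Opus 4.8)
The plan is to show that, when $H$ is a graph, the $\la$-Kantorovich difference $\KD(x,y)$ and the $L^1$-Wasserstein distance $W_1\big(\rwx,\rwy\big)$ differ by a term of order $\la^2$; dividing by $\la$ and passing to the limit $\la\dto0$ then forces $\kIKTUxy=\kLLYxy$ (and, en route, re-proves existence of the limit defining $\kIKTUxy$). To set things up, I would introduce the diagonal operator $D$ on $\rv$ with $(Df)(v)=d_vf(v)$. For every $f\in\rv$ one has $\kakko(f,\dex-\dey)=\langle D^{-1}f,\dex-\dey\rangle$, and $f$ is weighted $1$-Lipschitz if and only if $D^{-1}f$ is $1$-Lipschitz on $(V,d)$; hence $f\mapsto D^{-1}f$ is a bijection of $\Lip$ onto $\mathsf{Lip}^1(V)$, and after substituting $f=Dg$,
\begin{equation*}
\KD(x,y)=\sup_{g\in\mathsf{Lip}^1(V)}\big\langle D^{-1}J_\la(Dg),\dex-\dey\big\rangle .
\end{equation*}

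Next I would compute the hypergraph Laplacian on a graph. For an edge $e=\{u,v\}$, the base polytope $\mathsf{B}_e$ is the segment $\{t(\delta_u-\delta_v):t\in[-1,1]\}$, so $\argmax_{\mathsf{b}\in\mathsf{B}_e}\kakko(f,\mathsf{b})$ consists of a single vector whenever $\kakko(f,\delta_u-\delta_v)\neq0$, and whichever maximizer $\mathsf{b}_e(f)$ is chosen the summand $w_e\langle f,\mathsf{b}_e(f)\rangle_{\Di}\,\mathsf{b}_e(f)$ of $\LL f$ equals $w_e\kakko(f,\delta_u-\delta_v)(\delta_u-\delta_v)$. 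Hence on a graph $\LL$ is single-valued and linear, $\LL f=\sum_{e=\{u,v\}}w_e\kakko(f,\delta_u-\delta_v)(\delta_u-\delta_v)$, and a direct computation gives $D^{-1}\LL Dg=\Delta g$ for all $g\in\rv$, where $\Delta$ is the graph Laplacian of \autoref{LLY subsection}; in particular $\LL(D\mathbf 1)=0$ for the constant function $\mathbf 1$. Since $\LL$ is bounded and linear, the Neumann series gives $J_\la=(I+\la\LL)^{-1}=I-\la\LL+O(\la^2)$ in operator norm, so $D^{-1}J_\la(Dg)=(I-\la\Delta)g+O(\la^2)$, the remainder being $O(\la^2\|g\|_\infty)$ in sup-norm.

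Finally I would observe that, after this change of variables, both functionals $g\mapsto\langle D^{-1}J_\la(Dg),\dex-\dey\rangle$ and $g\mapsto\langle(I-\la\Delta)g,\dex-\dey\rangle$ are unchanged when $g$ is replaced by $g+c\mathbf 1$ (using $\LL(D\mathbf 1)=0$, $\Delta\mathbf 1=0$ and $\langle\mathbf 1,\dex-\dey\rangle=0$); hence both suprema above may be taken over $\{g\in\mathsf{Lip}^1(V):g(x)=0\}$, on which $\|g\|_\infty\le\diam(H)$, and there the estimate of the previous paragraph is uniform. This yields $\KD(x,y)=\sup_g\langle(I-\la\Delta)g,\dex-\dey\rangle+O(\la^2)$, while by the first equality of \eqref{wxy formula} one has $W_1\big(\rwx,\rwy\big)=\sup_g\langle(I-\la\Delta)g,\dex-\dey\rangle$ exactly. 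Therefore $\KD(x,y)=W_1\big(\rwx,\rwy\big)+O(\la^2)$, so
\begin{equation*}
\frac1\la\Big(1-\frac{\KD(x,y)}{d(x,y)}\Big)-\frac1\la\Big(1-\frac{W_1\big(\rwx,\rwy\big)}{d(x,y)}\Big)=\frac{W_1\big(\rwx,\rwy\big)-\KD(x,y)}{\la\,d(x,y)}=O(\la),
\end{equation*}
and letting $\la\dto0$ gives $\kIKTUxy=\kLLYxy$.

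The main obstacle is the middle step: one must check carefully both that the multivalued $\argmax$ in \autoref{LL} genuinely collapses to a single value on a graph and that conjugating $\LL$ by $D$ returns exactly the normalized graph Laplacian $\Delta$, and one must make precise the uniformity of the $O(\la^2)$ remainder over the supremum — which is exactly what the passage to the translation-normalized, hence sup-norm bounded, family of $1$-Lipschitz functions accomplishes. The remaining estimates are routine.
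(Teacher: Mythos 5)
Your proof is correct: on a graph the multivalued $\argmax$ in \autoref{LL} does collapse (any maximizer with $\kakko(f,\delta_u-\delta_v)=0$ contributes nothing), the conjugation $D^{-1}\LL D=\Delta$ checks out, and the translation-normalization makes the $O(\la^2)$ remainder uniform over the supremum, so $\KD(x,y)=W_1\big(\rwx,\rwy\big)+O(\la^2)$ as claimed. The paper itself does not reprove this statement (it cites \cite[Proposition 4.1]{IKTU}), but your argument is exactly the mechanism the paper records in \eqref{wxy formula}, so this is essentially the intended route.
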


\section{Weak Kantorovich difference and associated Ricci curvature} \label{wKD}

In this section, we propose a restriction on the range of the supremum in the definition of the $\la$-Kantorovich difference.
We first introduce a conjectured alternative expression of the IKTU curvature.

\subsection{Ikeda--Kitabeppu--Takai--Uehara's conjecture} \label{IKTU conj subsection}

\begin{defi}
For $f\in\rv$, we define 
\begin{equation*}
\3|(\LL f)\:=\inf_{f^\prime\in\LL f}\|f^\prime\|_{D^{-1}}.
\end{equation*}
\end{defi}

The above infimum is attained for any $f\in\rv$ by \autoref{closed convex}, and the minimizer is unique.
This allows us to restrict $\LL$ into a single-valued operator.

\begin{defi}[Canonical restriction of $\LL$] \label{canonical}
$\LL^0:\rv\to\rv$ defined by $\LL^0(f)=\LL^0f\in\LL f$ and $\|\LL^0f\|_{D^{-1}}=\3|(\LL f)$ is called the \emph{canonical restriction} of $\LL$.
\end{defi}

\begin{fact}[{\cite[Lemma 2.22 and Theorem 3.5]{Mi}}]
$\LL^0f$ can also be represented as
\begin{equation}
\LL^0f=\lim_{\la\dto0}\frac{f-J_\la f}{\la}. \label{L0-limit}
\end{equation}
\end{fact}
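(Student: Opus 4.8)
The plan is to recognize the difference quotient $\frac{f-\Jf}{\la}$ as the Yosida approximation of the maximal monotone operator $\LL$ and to run the classical convergence argument for Yosida approximations, exploiting that $\rv$ is finite-dimensional so that no weak-compactness machinery is needed.

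First I would set $g_\la\:=\frac{f-\Jf}{\la}$ and record the elementary identity $g_\la\in\LL(\Jf)$: indeed $\Jf=(I+\la\LL)^{-1}f$ means $f-\Jf\in\la\,\LL(\Jf)$. Next I would establish the uniform bound $\|g_\la\|_{D^{-1}}\le\3|(\LL f)$ for all $\la>0$, which is essentially the only computation needed. Since $g_\la\in\LL(\Jf)$ and $\LL^0 f\in\LL f$, monotonicity of $\LL$ gives $\kakko(g_\la-\LL^0 f,\Jf-f)\ge0$; substituting $\Jf-f=-\la g_\la$, dividing by $\la>0$, and applying the Cauchy--Schwarz inequality yields $\|g_\la\|_{D^{-1}}^2\le\kakko(\LL^0 f,g_\la)\le\|\LL^0 f\|_{D^{-1}}\|g_\la\|_{D^{-1}}$, hence the claim. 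As an immediate consequence $\|\Jf-f\|_{D^{-1}}=\la\|g_\la\|_{D^{-1}}\le\la\,\3|(\LL f)\to0$, so $\Jf\to f$ as $\la\dto0$.

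The final and only substantial step is the identification of the limit. Because $\rv$ is finite-dimensional and $\{g_\la\}_{0<\la\le1}$ is bounded, it suffices to show that every subsequential limit equals $\LL^0 f$. Given $\la_n\dto0$ with $g_{\la_n}\to g$, I would fix an arbitrary pair $(u,v)$ with $v\in\LL u$, use monotonicity to write $\kakko(g_{\la_n}-v,\,J_{\la_n}f-u)\ge0$, and let $n\to\infty$, using $J_{\la_n}f\to f$, to obtain $\kakko(g-v,\,f-u)\ge0$ for every such $(u,v)$. Maximal monotonicity of $\LL$ then forces $g\in\LL f$. On the other hand $\|g\|_{D^{-1}}=\lim_n\|g_{\la_n}\|_{D^{-1}}\le\3|(\LL f)$; since $\LL f$ is closed and convex by \autoref{closed convex}, it has a unique element of least $\|\cdot\|_{D^{-1}}$-norm, namely $\LL^0 f$, so $g=\LL^0 f$. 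As every subsequential limit of $(g_\la)_{\la\dto0}$ equals $\LL^0 f$, the limit exists and equals $\LL^0 f$, which is the assertion.

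I expect the main obstacle to be precisely this identification step: passing to the limit inside the monotonicity inequality and then invoking maximality of $\LL$ to guarantee that the limit returns to $\LL f$, after which minimality of the norm pins it down as $\LL^0 f$. Everything else is routine thanks to finite-dimensionality (bounded sets are precompact, and nonempty closed convex sets have a unique point of least norm); in an infinite-dimensional Hilbert space one would instead need weak compactness together with demiclosedness of the graph. If desired, one can also strengthen the bound above by showing that $\la\mapsto\|g_\la\|_{D^{-1}}$ is non-increasing, via the resolvent identity $\Jf=J_\mu\big(\tfrac{\mu}{\la}f+(1-\tfrac{\mu}{\la})\Jf\big)$ for $0<\mu\le\la$ and the nonexpansiveness of $J_\mu$; this yields monotone convergence of the norms but is not required for the statement.
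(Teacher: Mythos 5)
Your proof is correct. Note, however, that the paper does not prove this statement at all: it is recorded as a \emph{Fact} and attributed to Miyadera \cite{Mi}, so there is no internal argument to compare against. What you have written is essentially the standard convergence proof for the Yosida approximation of a maximal monotone operator, i.e.\ the content of the cited results, specialized to the present setting. Each step checks out: the identity $g_\la=\frac{f-\Jf}{\la}\in\LL(\Jf)$ follows from the definition of the resolvent; the monotonicity inequality $\kakko(g_\la-\LL^0f,\Jf-f)\ge0$ with $\Jf-f=-\la g_\la$ does give $\|g_\la\|_{D^{-1}}\le\3|(\LL f)$ (this uses that $\LL f\neq\emptyset$, which holds here since $\LL$ is the subdifferential of a finite convex function on all of $\rv$); the limit identification via maximal monotonicity plus the norm bound and the uniqueness of the minimal-norm element of the closed convex set $\LL f$ (\autoref{closed convex}) is exactly the classical argument. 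Your simplifications are legitimate: since $\rv$ is finite-dimensional, boundedness gives subsequential (strong) convergence and no weak-compactness or demiclosedness machinery is needed, which makes the write-up cleaner than the general Hilbert-space version in the reference. The only thing gained by the paper's approach is brevity; what yours buys is a self-contained justification of a fact the paper uses as a black box.
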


\begin{defi}
For two vertices $x,y$, we define the subset $\Lipxy\subset\Lip$ as
\begin{equation*}
\Lipxy \:= \Big\{ f\in\Lip \;\Big|\; \kakko(f,\dex-\dey)=d(x,y) \Big\}.
\end{equation*}
Furthermore, we also define
\begin{equation*}
\mathcal{C}(x,y) \:= \frac{1}{d(x,y)}\inf_{f\in\Lipxy} \kakko(\LL^0f,\dex-\dey).
\end{equation*}
\end{defi}

Ikeda--Kitabeppu--Takai--Uehara conjectured that the following formula holds for the IKTU curvature.

\begin{conj}[Ikeda--Kitabeppu--Takai--Uehara's conjecture {\cite[Remark 6.3]{IKTU}}] \label{IKTU conj}
For any $x,y\in V$, it holds that
\begin{equation*}
\kIKTUxy = \mathcal{C}(x,y). 
\end{equation*}
\end{conj}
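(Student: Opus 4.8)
The plan is to split the asserted identity $\kIKTUxy=\mathcal{C}(x,y)$ into the two inequalities $\kIKTUxy\le\mathcal{C}(x,y)$ and $\kIKTUxy\ge\mathcal{C}(x,y)$. The first is elementary; the second is where the difficulty lies, and I would reduce it to two structural facts about the resolvent $\Jf$.

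\textbf{The upper bound.} Fix $f\in\Lipxy$ and write $\psf\:=\frac{f-\Jf}{\la}$. Since $\Lipxy\subset\Lip$, the definition \eqref{eq-KD-def} gives $\KD(x,y)\ge\kakko(\Jf,\dex-\dey)$, and because $\kakko(f,\dex-\dey)=d(x,y)$ we obtain
\begin{equation*}
\frac1\la\left(1-\frac{\KD(x,y)}{d(x,y)}\right)\le\frac1{d(x,y)}\kakko(\psf,\dex-\dey).
\end{equation*}
Letting $\la\dto0$ and invoking \eqref{L0-limit}, the right-hand side tends to $\frac1{d(x,y)}\kakko(\LL^0f,\dex-\dey)$; taking the infimum over $f\in\Lipxy$ yields $\kIKTUxy\le\mathcal{C}(x,y)$. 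This is \autoref{K < C}.

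\textbf{The lower bound.} I would proceed in two steps. \emph{Step 1: restricting the supremum.} Show that for all sufficiently small $\la>0$ the supremum in \eqref{eq-KD-def} is unchanged when $\Lip$ is replaced by $\Lipxy$, that is, $\KD(x,y)=\wKD(x,y)$; this is \eqref{KD restriction}. The idea is to start from a $\la$-Kantorovich potential $f_\la$ and perturb it in the $x$--$y$ direction so that the constraint $\kakko(f,\dex-\dey)\le d(x,y)$ becomes an equality without decreasing $\kakko(\Jf,\dex-\dey)$; a complementary-slackness analysis of the convex program $\Jf=\argmin_{g\in\rv}\{\|f-g\|_{D^{-1}}^2/(2\la)+\mathcal{E}(g)\}$ should underlie this (cf.\ \autoref{Complementary slackness}). \emph{Step 2: uniform passage to the limit.} Granting $\KD(x,y)=\wKD(x,y)=\sup_{f\in\Lipxy}\kakko(\Jf,\dex-\dey)$, the computation from the upper bound, now run with an infimum over $f\in\Lipxy$ in place of a fixed $f$, gives
\begin{equation*}
\frac1\la\left(1-\frac{\KD(x,y)}{d(x,y)}\right)=\frac1{d(x,y)}\inf_{f\in\Lipxy}\kakko(\psf,\dex-\dey).
\end{equation*}
By \eqref{L0-limit}, $\psf\to\LL^0f$ pointwise; if this convergence is \emph{uniform} in $f$ over a class carrying the infimum — after truncating $\Lipxy$ to the bounded class $\tLipxy$ in the spirit of \cite[Proposition 3.3]{IKTU}, this is precisely \autoref{liminf conj} — then a routine equi-convergence argument lets the infimum and the limit $\la\dto0$ be interchanged, so that $\kIKTUxy=\frac1{d(x,y)}\inf_{f\in\Lipxy}\kakko(\LL^0f,\dex-\dey)=\mathcal{C}(x,y)$. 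Together with the upper bound this proves the identity, and it also identifies asymptotic minimizers of $\mathcal{C}(x,y)$ with $\la$-weak Kantorovich potentials (cf.\ \autoref{main corollary}).

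The genuinely hard part is Step 2, namely \autoref{liminf conj}: the resolvent $\Jf$ minimizes a non-smooth convex functional whose subdifferential $\LL$ is assembled from the selections $\flow(e)\in\argmax_{\mathsf{b}\in\mathsf{B}_e}\kakko(f,\mathsf{b})$, and these selections can jump discontinuously as $f$ varies over the edge base polytopes $\mathsf{B}_e$, so obtaining a rate for $\psf\to\LL^0f$ that is uniform in $f\in\tLipxy$ requires controlling the facial structure of all the $\mathsf{B}_e$ simultaneously across the whole class. Step 1 is also not obvious in general — it is not even known that $\wKD$ is always a distance. For these reasons I would, as a fallback, content myself with the conditional statement \autoref{main theorem} (the equality $\kxy=\mathcal{C}(x,y)$ under \autoref{liminf conj}, where $\kxy$ is the $\wIKTU$ curvature) together with structural reductions of $\mathcal{C}(x,y)$ for hypergraphs of special shape (\autoref{intro thm}) that make it explicitly computable.
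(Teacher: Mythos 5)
The statement you are asked to prove is a \emph{conjecture}: the paper does not prove it, and explicitly records that the inequality $\kIKTUxy\ge\CC(x,y)$ is open (see \eqref{open ineq}). Your proposal does not close it either, and you say so candidly; what you produce is exactly the paper's partial analysis. Your upper bound is \autoref{K < C} (the paper runs the same computation, organized through $\tLipxy$ and a minimizer $g$), and your two-step reduction of the lower bound is precisely \autoref{main thm}: Step~1 is the restriction \eqref{KD restriction} of the supremum in \eqref{eq-KD-def} to $\Lipxy$, and Step~2 is \autoref{liminf conj}. Both remain unproven in the paper, so your argument is a conditional reduction, not a proof; but as a reduction it identifies the same two missing ingredients the paper isolates, and your fallback (\autoref{main theorem} for the $\wIKTU$ curvature) is exactly the paper's main unconditional-modulo-\autoref{liminf conj} result.

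Two technical cautions on your Step~2 as written. First, you overload $\psf$: the paper defines $\psf(v)=\big(f(v)-\Jf(v)\big)-\LL^0f(v)\cdot\la$ (\autoref{phi psi def}), i.e.\ the remainder \emph{after} subtracting the linear term, whereas you use it for the full difference quotient $(f-\Jf)/\la$; \autoref{liminf conj} is stated for the former. Second, interchanging $\lim_{\la\dto0}$ with $\inf_{f}$ for the full quotient does not follow formally from the interchange for the remainder alone, since in general $\lim_\la\inf_f\{a(f)+b_\la(f)\}$ need not split; the paper circumvents this by evaluating along a $\la$-Kantorovich potential $f_\la$ and using that $\kakko(\LL^0f_\la,\dex-\dey)$ is constant for small $\la$ (\autoref{key remark}, via \cite[Proposition A.4]{IKTU}) before invoking \autoref{liminf conj} in \autoref{kIKTU lower bound}. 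If you intend your sketch to be made rigorous, you should route it through that constancy statement as the paper does.
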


When $H$ is a graph, we know that a similar relationship holds for the LLY curvature and the graph Laplacian (\cite[Theorem 2.1]{MW}).

One can see by direct computation the following.

\begin{prop}[see also {\cite[Lemma 2.11]{KM}}] \label{K < C}
For any $x,y\in V$, we have 
\begin{equation*}
\kIKTUxy \le \mathcal{C}(x,y). 
\end{equation*}
\end{prop}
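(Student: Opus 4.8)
The plan is to use the limit formula \eqref{L0-limit} for $\LL^0 f$ together with the definition of $\KD$ and a careful choice of near-optimizer. First I would fix $x,y\in V$ and recall that, by definition,
\[
\kIKTUxy=\lim_{\la\dto0}\frac{1}{\la}\Big(1-\frac{\KD(x,y)}{d(x,y)}\Big)
=\frac{1}{d(x,y)}\,\lim_{\la\dto0}\frac{d(x,y)-\KD(x,y)}{\la},
\]
so it suffices to show that $\lim_{\la\dto0}\la^{-1}\big(d(x,y)-\KD(x,y)\big)\le\inf_{f\in\Lipxy}\kakko(\LL^0 f,\dex-\dey)$. Equivalently, for each fixed $f\in\Lipxy$ I want to produce an upper bound $\KD(x,y)\ge d(x,y)-\la\kakko(\LL^0 f,\dex-\dey)+o(\la)$.

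The key step is to use $f$ itself as a competitor in the supremum defining $\KD(x,y)=\sup_{g\in\Lip}\kakko(J_\la g,\dex-\dey)$. Since $f\in\Lipxy\subset\Lip$, we get $\KD(x,y)\ge\kakko(J_\la f,\dex-\dey)$. Now write $J_\la f=f-(f-J_\la f)$, so that
\[
\kakko(J_\la f,\dex-\dey)=\kakko(f,\dex-\dey)-\kakko(f-J_\la f,\dex-\dey)
=d(x,y)-\la\cdot\frac{1}{\la}\kakko(f-J_\la f,\dex-\dey),
\]
using $\kakko(f,\dex-\dey)=d(x,y)$ because $f\in\Lipxy$. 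By the limit formula \eqref{L0-limit}, $\la^{-1}(f-J_\la f)\to\LL^0 f$ in $\big(\rv,\kakko(\cdot,\cdot)\big)$ as $\la\dto0$, hence $\la^{-1}\kakko(f-J_\la f,\dex-\dey)\to\kakko(\LL^0 f,\dex-\dey)$. Dividing by $\la$ and taking $\la\dto0$ yields
\[
\lim_{\la\dto0}\frac{d(x,y)-\KD(x,y)}{\la}\le\kakko(\LL^0 f,\dex-\dey),
\]
and since $f\in\Lipxy$ was arbitrary, taking the infimum over $f$ and dividing by $d(x,y)$ gives $\kIKTUxy\le\CC(x,y)$.

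The main obstacle, such as it is, is purely bookkeeping: one must make sure the $o(\la)$ error terms go the right way, i.e. that the inequality $\KD(x,y)\ge\kakko(J_\la f,\dex-\dey)$ (rather than an equality) still produces a correct bound after dividing by $\la$ and passing to the limit — it does, because the inequality has the favorable sign. One should also confirm that the limit defining $\kIKTUxy$ exists (already guaranteed by \cite[Theorem A.1]{IKTU}), so that the $\limsup$ computed above is in fact the limit. No compactness or uniformity is needed here; the argument is a one-sided estimate with a single fixed test function $f$, which is exactly why only one inequality — and not the conjectured equality — comes out.
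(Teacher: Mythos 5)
Your proposal is correct and follows essentially the same route as the paper: both proofs combine the limit formula \eqref{L0-limit}, which turns $\kakko(\LL^0f,\dex-\dey)$ into $\lim_{\la\dto0}\la^{-1}\big(d(x,y)-\kakko(\Jf,\dex-\dey)\big)$ for $f\in\Lipxy$, with the observation that such an $f$ is an admissible competitor in the supremum defining $\KD(x,y)$, so that $\kakko(\Jf,\dex-\dey)\le\KD(x,y)$. The only cosmetic difference is that you run the chain of inequalities from $\kIKTUxy$ up to $\CC(x,y)$ with an arbitrary fixed $f$, while the paper runs it from $\CC(x,y)$ down to $\kIKTUxy$ via a minimizer and an intermediate restriction to $\tLipxy$.
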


\begin{proof}
Notice that, by replacing $f$ with $f-f(y)$, we can restrict the range of the infimum of $\CC(x,y)$ to 
\begin{equation*}
\tLipxy \:= \bigg\{ f\in\tLip \;\bigg|\; \kakko(f,\dex-\dey)=d(x,y) \bigg\}.
\end{equation*}
Combining this with \eqref{L0-limit}, we have
\begin{align}
d(x,y)\CC(x,y)
&= 
\inf_{f\in\tLipxy}\kakko(\lim_{\la\dto0}\frac{f-\Jf}{\la},\dex-\dey) & \notag \\
&= 
\inf_{f\in\tLipxy}\lim_{\la\dto0}\frac{d(x,y)-\kakko(\Jf,\dex-\dey)}{\la} & \label{inf,lim} \\
&= 
\lim_{\la\dto0}\frac{d(x,y)-\kakko(J_\la g,\dex-\dey)}{\la} &\Big( g\in\tLipxy:\text{ a minimizer} \Big)\notag \\
&\ge \lim_{\la\dto0}\frac{1}{\la}\Bigg( d(x,y)-\sup_{f\in\tLipxy}\kakko(\Jf,\dex-\dey) \Bigg) & \label{lim,-sup} \\
&\ge \lim_{\la\dto0}\frac{1}{\la}\Bigg( d(x,y)-\sup_{f\in\tLip}\kakko(\Jf,\dex-\dey) \Bigg) & \notag \\
&= d(x,y)\kIKTUxy. & \notag 
\end{align}
We remark that the limit in \eqref{lim,-sup} exists (see \autoref{wIKTU subsection}).
\end{proof}

The other inequality
\begin{equation}
\kIKTUxy\ge\mathcal{C}(x,y) \label{open ineq}
\end{equation}
is open.
Now, we have a closer look into \eqref{inf,lim} and \eqref{lim,-sup}.

\begin{defi} \label{phi psi def}
For $\la>0$ and $f\in\rv$, we define $\psf\in\rv$ by $\psf(v)\:=\big(f(v)-\Jf(v)\big)-\LL^0f(v)\cdot\la$.
\end{defi}

Note that $\lim_{\la\dto0}\psf(v)/\la=0$ holds by \eqref{L0-limit}.

\begin{lem} \label{key prop}
Let $\la\in[0,1]$ and $f\in\Lip$.
For any $x,y\in V$, we have
\begin{equation*}
\KD(x,y)
= 
\sup_{f\in\tLip}\Big\{\kakko(f,\dex-\dey)-\kakko(\LL^0f,\dex-\dey)\la-\kakko(\psf,\dex-\dey)\Big\}. 
\end{equation*}
\end{lem}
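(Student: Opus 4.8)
The plan is to unfold the definition of $\KD(x,y)$ and substitute the decomposition of $f - \Jf$ furnished by \autoref{phi psi def}. Starting from
\[
\KD(x,y)=\sup_{f\in\Lip}\kakko(\Jf,\dex-\dey),
\]
I would first invoke the already-established fact (\cite[Proposition 3.3]{IKTU}, quoted just above) that the supremum is unchanged when $\Lip$ is replaced by $\tLip$, so that
\[
\KD(x,y)=\sup_{f\in\tLip}\kakko(\Jf,\dex-\dey).
\]
Then, for each fixed $f\in\tLip$, I would rewrite $\Jf = f - \big(f-\Jf\big)$ and use \autoref{phi psi def}, which says precisely $f - \Jf = \LL^0f\cdot\la + \psf$. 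By bilinearity of $\kakko(\cdot,\cdot)$ this gives, termwise,
\[
\kakko(\Jf,\dex-\dey)
=\kakko(f,\dex-\dey)-\kakko(\LL^0f,\dex-\dey)\la-\kakko(\psf,\dex-\dey),
\]
and taking the supremum over $f\in\tLip$ yields the claimed identity.

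The only genuine content beyond this algebraic manipulation is the reduction from $\Lip$ to $\tLip$; everything else is a direct substitution valid for each individual $f$. So the main (and really the only) obstacle is making sure that this reduction applies here. That is not hard: it is exactly statement (2) of the Proposition cited immediately before \autoref{KD-def}, and the key point is that $\Jf$, $\LL^0 f$, and hence $\psf$ are all invariant under translating $f$ by a constant function (since $\LL(f+c\mathbf{1})=\LL f$ and thus $J_\la(f+c\mathbf1)=J_\la f + c\mathbf1$, while $\dex-\dey$ pairs to zero against constants), which is why we may normalise any $f\in\Lip$ so that, after translation, it lies in $\tLip$ without changing either side of the identity.

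One small bookkeeping remark: the statement of \autoref{key prop} carries the hypotheses "$\la\in[0,1]$ and $f\in\Lip$," but the conclusion is an identity between two quantities depending only on $x,y$ and $\la$; the bound $\la\le 1$ is not used in the proof (it is inherited from the ambient convention needed later when taking $\la\dto 0$), and the "$f\in\Lip$" is vacuous once we pass to the supremum form. So I would simply remark that neither hypothesis is essential for the displayed equality and proceed with the substitution above. I expect the written proof to be only a few lines.
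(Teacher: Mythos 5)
Your proposal is correct and takes essentially the same route as the paper, whose proof consists of exactly the substitution $f-\Jf=\la\LL^0f+\psf$ from \autoref{phi psi def} into $\kakko(\Jf,\dex-\dey)$ together with the restriction of the supremum to $\tLip$ via the proposition quoted before \autoref{KD-def}. One minor caveat on your side remark: with respect to the weighted inner product $\kakko(\cdot,\cdot)$ a constant function does \emph{not} pair to zero against $\dex-\dey$ (one gets $c/d_x-c/d_y$), so the normalisation is by translates of the degree function $v\mapsto d_v$ rather than by constants; this does not affect your argument, since you correctly defer the reduction to the cited result.
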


\begin{proof}
The claim follows from direct computations:
\begin{equation}
\kakko(\Jf,\dex-\dey) 
= 
\kakko(f,\dex-\dey)-\big\{\kakko(\LL^0f,\dex-\dey)\la+\kakko(\psf,\dex-\dey)\big\}. \label{pf-la formula 2}
\end{equation}
\end{proof}

Using \eqref{pf-la formula 2}, we can rewrite \eqref{inf,lim} and \eqref{lim,-sup} as 
\begin{align*}
\eqref{inf,lim}
&=
\inf_{f\in\tLipxy}\lim_{\la\dto0}\frac{1}{\la}\Bigg( d(x,y)-\Big\{\kakko(f,\dex-\dey)-\kakko(\LL^0f,\dex-\dey)\la-\kakko(\psf,\dex-\dey)\Big\} \Bigg) \\
&=
\inf_{f\in\tLipxy}\lim_{\la\dto0}\Bigg\{\kakko(\LL^0f,\dex-\dey)+\frac{\dis\kakko(\psf,\dex-\dey)}{\la}\Bigg\}, \\
\eqref{lim,-sup}
&=
\lim_{\la\dto0}\frac{1}{\la}\Bigg( d(x,y)-\sup_{f\in\tLipxy}\Big\{\kakko(f,\dex-\dey)-\kakko(\LL^0f,\dex-\dey)\la-\kakko(\psf,\dex-\dey)\Big\} \Bigg) \\
&=
\lim_{\la\dto0}\inf_{f\in\tLipxy}\Bigg\{\kakko(\LL^0f,\dex-\dey)+\frac{\dis\kakko(\psf,\dex-\dey)}{\la}\Bigg\}.
\end{align*}

\begin{rem} \label{key remark}
Let $f_\la$ be a $\la$-Kantorovich potential of $\KD(x,y)$.
Then, $\kakko(\LL^0f_\la,\dex-\dey)$ is a constant for sufficiently small $\la>0$ thanks to \cite[Proposition A.4]{IKTU}.
\end{rem}

\begin{nota} \label{nota}
We denote the constant given in \autoref{key remark} by $\KIKTUxy$:
For sufficiently $\la>0$,
\begin{equation*}
\KIKTUxy\:=\kakko(\LL^0f_\la,\dex-\dey).
\end{equation*}
\end{nota}

We expect the following to hold.

\begin{conj} \label{liminf conj}
For any $x,y\in V$, it holds that
\begin{equation*}
\lim_{\la\dto0}\inf_{f\in\tLip}\frac{\kakko(\psf,\dex-\dey)}{\la}
=
\inf_{f\in\tLip}\lim_{\la\dto0}\frac{\kakko(\psf,\dex-\dey)}{\la}
\quad(=0). 
\end{equation*}
\end{conj}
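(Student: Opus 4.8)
The plan is to strip the statement to its genuine content, reduce the infimum to a compact family of potentials, dispose of the ``smooth'' part by a direct estimate, and then isolate the only serious difficulty, which lives on the non-smooth locus of the energy $\mathcal E$. For the first point: for each fixed $f\in\tLip$ one has $\kakko(\psf,\dex-\dey)/\la\to0$ as $\la\dto0$ by \autoref{phi psi def} and \eqref{L0-limit}, so, an infimum being bounded above by any single value, $\limsup_{\la\dto0}\inf_{f\in\tLip}\kakko(\psf,\dex-\dey)/\la\le0$, while the right-hand side of the conjectured identity is $\inf_{f}0=0$. Hence the entire content is the reverse bound
\[
\liminf_{\la\dto0}\ \inf_{f\in\tLip}\ \frac{1}{\la}\kakko(\psf,\dex-\dey)\ \ge\ 0 ,
\]
i.e.\ a rate for the convergence $(f-\Jf)/\la\to\LL^0f$ that is uniform in $f$ once the difference is paired with the fixed vector $\dex-\dey$.

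Next I would reduce the infimum to a compact set. Let $\mathbf d\in\rv$, $\mathbf d(v):=d_v$, be the degree function. Since $\kakko(\mathbf d,\dex-\dey)=0$ for all $x,y$, one gets $\kakko(\mathbf d,\mathsf b)=0$ for every $\mathsf b\in\mathsf B_e$, whence $\LL(f+c\mathbf d)=\LL(f)$ and $J_\la(f+c\mathbf d)=\Jf+c\mathbf d$, and therefore $\psi_\la(f+c\mathbf d)=\psf$ for all $c\in\R$. Combining this $\mathbf d$-invariance with the homogeneity $\psi_\la(cf)=c\,\psf$ (the analogue of the normalisation in the proof of \autoref{K < C}), the infimum over $\tLip$ equals the infimum over the compact polytope $K:=\{f\in\Lip\mid f(y)=0\}$, on which $\3|(\LL f)$ is bounded.

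For the smooth part, partition $\rv$ into the finitely many open polyhedral cells on which, for every $e$, $\argmax_{\mathsf b\in\mathsf B_e}\kakko(f,\mathsf b)$ is a single vertex of $\mathsf B_e$; on such a cell $\LL$ restricts to a fixed positive semidefinite linear operator $L$, and for $f$ in the cell and $\la$ small enough that $\Jf$ still lies in it one computes $(f-\Jf)/\la-\LL^0f=-\la(I+\la L)^{-1}L^2f$, so $\psf=O(\la^2)$ uniformly on compact subsets of each cell. As $K$ is covered by finitely many such cell closures, it remains only to control $\kakko(\psf,\dex-\dey)/\la$ uniformly for $f$ near the union of the cell boundaries, i.e.\ near the non-smooth locus of $\mathcal E$.

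This last point is the obstacle. Writing $g_\la:=(f-\Jf)/\la\in\LL(\Jf)$, monotonicity of $\LL$ applied to $g_\la\in\LL(\Jf)$ and $\LL^0f\in\LL(f)$, together with $\Jf-f=-\la g_\la$, yields the elementary bound $\|g_\la-\LL^0f\|_{\Di}^2\le\3|(\LL f)^2-\|g_\la\|_{\Di}^2$, so by Cauchy--Schwarz against $\dex-\dey$ it would suffice to prove that $\|g_\la\|_{\Di}\to\3|(\LL f)$ uniformly near the non-smooth locus. But that fails: at a non-smooth point $f^\ast$, where some $\argmax_{\mathsf b\in\mathsf B_e}\kakko(f,\mathsf b)$ is a positive-dimensional face, the map $f\mapsto\3|(\LL f)$ is merely lower semicontinuous and jumps downward ($\LL^0f^\ast$ lies in the interior of the active face-box, while for $f$ approaching $f^\ast$ the velocity $g_\la$ limits onto a vertex of that box), so $\|g_\la\|_{\Di}$ is not uniformly close to $\3|(\LL f)$ there and the Cauchy--Schwarz route collapses. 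One is therefore forced to use the specific structure of $\LL^0$ as the minimal-norm selection, together with the fact that the test vector $\dex-\dey$ is fixed: one must show that near $f^\ast$ the velocity $g_\la$ aligns with the face active at $f^\ast$ rapidly enough that $\kakko(g_\la-\LL^0f,\dex-\dey)\ge-o(1)$ uniformly, even though $\|g_\la-\LL^0f\|_{\Di}$ need not be $o(1)$ there. Establishing this directional uniformity across the whole non-smooth locus — exactly what is checked by hand for the structured hypergraphs in the later sections — is the crux, and, as far as I can see, the reason the statement is left as a conjecture.
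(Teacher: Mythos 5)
The first thing to say is that the paper itself offers no proof of this statement: it is \autoref{liminf conj}, explicitly left open and used only as a hypothesis in \autoref{kIKTU lower bound}, \autoref{main theorem} and \autoref{main corollary}. So there is no argument of the author's to compare yours against, and the only question is whether your attempt actually closes the conjecture. It does not --- and you say so yourself --- but the partial work is essentially sound, so let me record what is right and where the genuine gap sits.

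Your reductions are correct. Since $\psf=\la\big(g_\la-\LL^0f\big)$ with $g_\la=(f-\Jf)/\la\in\LL(\Jf)$, the pointwise limit $\psf/\la\to0$ from \eqref{L0-limit} gives the upper bound $\limsup_{\la\dto0}\inf_{f\in\tLip}\kakko(\psf,\dex-\dey)/\la\le0$ for free, and the whole content is the lower bound $\liminf_{\la\dto0}\inf_{f}\kakko(g_\la-\LL^0f,\dex-\dey)\ge0$. The invariance of $\psi_\la$ under $f\mapsto f+c\mathbf{d}$ with $\mathbf{d}(v)=d_v$ is correct and legitimately replaces $\tLip$ by a compact polytope; the computation $\psf=-\la^2(I+\la L)^{-1}L^2f$ on the open cells where $\LL$ restricts to a linear map $L$ is correct; and the monotonicity estimate $\|g_\la-\LL^0f\|_{\Di}^2\le\3|(\LL f)^2-\|g_\la\|_{\Di}^2$ is the standard one. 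The gap is exactly where you place it: near a point $f^\ast$ at which some $\argmax_{\mathsf b\in\mathsf B_e}\kakko(f,\mathsf b)$ is a positive-dimensional face, the map $f\mapsto\3|(\LL f)$ is only lower semicontinuous, $\|g_\la\|_{\Di}$ need not approach $\3|(\LL f)$ uniformly, and no uniform bound on $\|g_\la-\LL^0f\|_{\Di}$ survives, so Cauchy--Schwarz cannot finish. A proof would have to extract directional control of $g_\la-\LL^0f$ against the fixed vector $\dex-\dey$ rather than norm control, and neither your write-up nor the paper supplies that; the explicit computations in \autoref{calculation section 4} verify related behaviour only for very particular hypergraphs and do not amount to the required uniform statement. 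The verdict is therefore: not a proof, but an accurate reduction of the conjecture to its actual difficulty, which remains open after your attempt exactly as it is open in the paper.
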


If \autoref{liminf conj} is true, then $\kIKTUxy$ can be estimated from below by $\KIKTUxy$.

\begin{prop} \label{kIKTU lower bound}
If \autoref{liminf conj} is true, then for any $x,y\in V$, we have
\begin{equation*}
\kIKTUxy \ge \frac{\KIKTUxy}{d(x,y)}. 
\end{equation*}
\end{prop}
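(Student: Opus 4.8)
The plan is to chain together the rewritings of \eqref{inf,lim} and \eqref{lim,-sup} obtained just above, using \autoref{liminf conj} to swap the limit and infimum in the expression for \eqref{lim,-sup}, and then to identify the resulting quantity with $\KIKTUxy$ via \autoref{key remark} and \autoref{nota}. More precisely, recall from the proof of \autoref{K < C} that
\begin{equation*}
d(x,y)\kIKTUxy
=
\lim_{\la\dto0}\frac{1}{\la}\Bigg(d(x,y)-\sup_{f\in\tLip}\kakko(\Jf,\dex-\dey)\Bigg)
\ge
\lim_{\la\dto0}\frac{1}{\la}\Bigg(d(x,y)-\sup_{f\in\tLipxy}\kakko(\Jf,\dex-\dey)\Bigg),
\end{equation*}
and that the right-hand side was rewritten, via \eqref{pf-la formula 2}, as
$\lim_{\la\dto0}\inf_{f\in\tLipxy}\big\{\kakko(\LL^0f,\dex-\dey)+\kakko(\psf,\dex-\dey)/\la\big\}$. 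Wait — here I must be careful: \autoref{liminf conj} is stated with the infimum over $\tLip$, not $\tLipxy$, so the first step is to note that the conjectured interchange also holds (or can be reduced to the stated form) over the smaller class $\tLipxy$; since $\tLipxy\subset\tLip$ and the pointwise limit is $0$ on all of $\tLip$, the $\tLipxy$-infimum of the limit is still $0$, and the $\tLipxy$-infimum of $\kakko(\psf,\dex-\dey)/\la$ is $\ge$ the $\tLip$-infimum, so \autoref{liminf conj} forces $\lim_{\la\dto0}\inf_{f\in\tLipxy}\kakko(\psf,\dex-\dey)/\la=0$ as well.

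Granting this, the next step is to separate the two terms inside the infimum. Write
\begin{equation*}
\inf_{f\in\tLipxy}\left\{\kakko(\LL^0f,\dex-\dey)+\frac{\kakko(\psf,\dex-\dey)}{\la}\right\}
\ge
\inf_{f\in\tLipxy}\kakko(\LL^0f,\dex-\dey)
+
\inf_{f\in\tLipxy}\frac{\kakko(\psf,\dex-\dey)}{\la}
=
d(x,y)\CC(x,y)+\inf_{f\in\tLipxy}\frac{\kakko(\psf,\dex-\dey)}{\la},
\end{equation*}
using the (equivalent) definition of $\CC(x,y)$ with infimum over $\tLipxy$ established in the proof of \autoref{K < C}. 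Taking $\la\dto0$ and invoking the vanishing of the last infimum gives $d(x,y)\kIKTUxy\ge d(x,y)\CC(x,y)$. But that is the full conjecture, not the weaker bound claimed here — so in fact the cleaner route, and presumably the intended one, is to avoid splitting the infimum and instead simply plug in a single well-chosen competitor: take $f=f_\la$, a $\la$-Kantorovich potential of $\KD(x,y)$. Since $f_\la\in\Lip$ attains the supremum in \eqref{eq-KD-def} and (after the normalization $f\mapsto f-f(y)$, which changes neither $\KD$ nor $\LL^0$ nor membership considerations) may be taken in $\tLip$, and since one checks $f_\la\in\tLipxy$ from the fact that it realizes $\KD(x,y)=d(x,y)$-type equality, we get
\begin{equation*}
d(x,y)-\sup_{f\in\tLip}\kakko(\Jf,\dex-\dey)
=
d(x,y)-\kakko(J_\la f_\la,\dex-\dey)
=
\kakko(\LL^0f_\la,\dex-\dey)\la+\kakko(\psi_\la f_\la,\dex-\dey),
\end{equation*}
where the last equality is \eqref{pf-la formula 2}. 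Dividing by $\la$, using \autoref{nota} to replace $\kakko(\LL^0f_\la,\dex-\dey)$ by the constant $\KIKTUxy$ for small $\la$, and using \autoref{liminf conj} (applied to the single sequence $f_\la$, whose $\psi_\la f_\la/\la$-term is squeezed between the $\tLip$-infimum, which tends to $0$, and something tending to $0$) to kill the remainder term, yields $d(x,y)\kIKTUxy=\KIKTUxy+0$, hence the claim.

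The main obstacle is the last point: \autoref{liminf conj} as stated controls $\inf_{f\in\tLip}\kakko(\psf,\dex-\dey)/\la$, a fixed infimum over a fixed set, but along the proof one needs the remainder $\kakko(\psi_\la f_\la,\dex-\dey)/\la$ for the \emph{$\la$-dependent} potentials $f_\la$ to vanish. The way around this is that $\kakko(\psi_\la f_\la,\dex-\dey)/\la \ge \inf_{f\in\tLip}\kakko(\psf,\dex-\dey)/\la \to 0$ gives one-sided control, and the matching upper bound $\limsup_{\la\dto0}\kakko(\psi_\la f_\la,\dex-\dey)/\la\le 0$ must come from comparing the two expressions for $d(x,y)\kIKTUxy$ and $d(x,y)\CC(x,y)$ — concretely, from $\kIKTUxy\le\CC(x,y)$ of \autoref{K < C} together with \autoref{nota} forcing $\KIKTUxy\le d(x,y)\CC(x,y)$; I would spell out this squeezing carefully, as it is the one place where the argument is not a pure substitution.
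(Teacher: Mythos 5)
Your final route is essentially the paper's argument, but two of the steps you assert are not justified, and one of them is a genuine gap. First, your opening display is reversed: since $\tLipxy\subset\tLip$ we have $\sup_{f\in\tLipxy}\kakko(\Jf,\dex-\dey)\le\sup_{f\in\tLip}\kakko(\Jf,\dex-\dey)$, hence
\begin{equation*}
\frac{1}{\la}\Bigg(d(x,y)-\sup_{f\in\tLip}\kakko(\Jf,\dex-\dey)\Bigg)
\le
\frac{1}{\la}\Bigg(d(x,y)-\sup_{f\in\tLipxy}\kakko(\Jf,\dex-\dey)\Bigg),
\end{equation*}
i.e.\ the right-hand side is $d(x,y)\kxy\ge d(x,y)\kIKTUxy$, not the other way around. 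This is why your ``split the infimum'' detour appeared to prove too much: what it actually yields is a lower bound on $\kxy$, i.e.\ a variant of \autoref{main theorem}, not of \autoref{IKTU conj}. Second, and more seriously, in the route you settle on you claim that ``one checks $f_\la\in\tLipxy$ from the fact that it realizes $\KD(x,y)=d(x,y)$-type equality.'' This cannot be checked: whether a $\la$-Kantorovich potential satisfies $\kakko(f_\la,\dex-\dey)=d(x,y)$ is exactly the open condition \eqref{KD restriction}, which is the hypothesis of \autoref{main thm} and the whole motivation for introducing the weak Kantorovich difference. The complementary slackness of \autoref{Complementary slackness} is a statement about $W_1$-potentials on graphs, not about $\la$-Kantorovich potentials on hypergraphs. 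Consequently your displayed \emph{equality} $d(x,y)-\kakko(J_\la f_\la,\dex-\dey)=\kakko(\LL^0f_\la,\dex-\dey)\la+\kakko(\psi_\la f_\la,\dex-\dey)$ is unjustified, and so is the concluding equality $d(x,y)\kIKTUxy=\KIKTUxy$; the proposition only asserts an inequality.

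The repair, which is what the paper does, is to use only what $f_\la\in\Lip$ gives you: $d(x,y)-\kakko(f_\la,\dex-\dey)\ge0$, so by \eqref{pf-la formula 2},
\begin{equation*}
d(x,y)-\kakko(J_\la f_\la,\dex-\dey)
\ge
\kakko(\LL^0f_\la,\dex-\dey)\la+\kakko(\psi_\la f_\la,\dex-\dey).
\end{equation*}
Dividing by $\la$, replacing $\kakko(\LL^0f_\la,\dex-\dey)$ by the constant $\KIKTUxy$ via \autoref{key remark} and \autoref{nota}, and bounding $\kakko(\psi_\la f_\la,\dex-\dey)/\la$ from below by $\inf_{f\in\tLip}\kakko(\psi_\la f,\dex-\dey)/\la$, which tends to $0$ by \autoref{liminf conj}, already gives $d(x,y)\kIKTUxy\ge\KIKTUxy$. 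No upper bound on the remainder along $f_\la$, and no membership of $f_\la$ in $\tLipxy$, is needed; the squeezing argument you sketch in your last paragraph is solving a problem the statement does not pose.
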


\begin{proof}
Using \eqref{pf-la formula 2}, we can estimate as follows.
\begin{align*}
&\;\quad 
d(x,y)\kIKTUxy & \\
&= 
\lim_{\la\dto0}\frac{1}{\la}\Bigg\{ d(x,y)-\bigg(\kakko(f_\la,\dex-\dey)-\kakko(\LL^0f_\la,\dex-\dey)\la-\kakko(\psf_\la,\dex-\dey)\bigg) \Bigg\} & \\
&\ge 
\limsup_{\la\dto0} \frac{1}{\la}\bigg(\kakko(\LL^0f_\la,\dex-\dey)\la+\kakko(\psf_\la,\dex-\dey)\bigg) & \\
&\ge 
\lim_{\la\dto0}\Bigg(\kakko(\LL^0f_\la,\dex-\dey)+\inf_{f\in\tLip}\frac{\dis\kakko(\psf,\dex-\dey)}{\la}\Bigg) & \\
&= 
\KIKTUxy+\lim_{\la\dto0}\inf_{f\in\tLip}\frac{\kakko(\psf,\dex-\dey)}{\la} & \\
&=
\KIKTUxy, &\Big(\text{by \autoref{liminf conj}}\Big)
\end{align*}
where $f_\la$ is a $\la$-Kantorovich potential of $\KD(x,y)$.
\end{proof}

By combining this with \autoref{K < C}, $\kIKTUxy$ is estimated from above and below as follows.

\begin{cor}
If \autoref{liminf conj} is true, then for any $x,y\in V$, we have
\begin{equation*}
\frac{\KIKTUxy}{d(x,y)} 
\le
\kIKTUxy 
\le 
\inf_{f\in\tLipxy} \frac{\kakko(\LL^0f,\dex-\dey)}{d(x,y)} 
\quad \Big(=\mathcal{C}(x,y)\Big). 
\end{equation*}
\end{cor}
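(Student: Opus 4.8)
The plan is to derive the displayed two–sided estimate by concatenating two facts that are already available, after checking that their hypotheses are mutually compatible. There is nothing to invent here: the corollary is the juxtaposition of \autoref{kIKTU lower bound} and \autoref{K < C}.

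First I would dispose of the right–hand inequality and the parenthetical equality. The bound $\kIKTUxy \le \mathcal{C}(x,y)$ is exactly \autoref{K < C}, and it holds unconditionally, i.e.\ without any reference to \autoref{liminf conj}. The identity $\mathcal{C}(x,y) = \inf_{f\in\tLipxy}\kakko(\LL^0 f,\dex-\dey)/d(x,y)$ is nothing but the reduction carried out at the very beginning of the proof of \autoref{K < C}: replacing $f$ by $f-f(y)$ shows that the infimum over $\Lipxy$ defining $\mathcal{C}(x,y)$ is unchanged when restricted to $\tLipxy$. So the entire right half of the statement is already in place.

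Next I would handle the left–hand inequality $\KIKTUxy/d(x,y)\le\kIKTUxy$ by invoking \autoref{kIKTU lower bound} verbatim: granting \autoref{liminf conj}, that proposition asserts precisely this inequality, the quantity $\KIKTUxy$ being well defined through \autoref{key remark} and \autoref{nota}. Stringing the two paragraphs together yields the claimed chain of inequalities.

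Accordingly, I do not expect any genuine obstacle at the level of this corollary — it is a bookkeeping step. The one thing to verify is that the hypotheses align, and they do: the upper estimate \autoref{K < C} requires nothing, whereas the lower estimate \autoref{kIKTU lower bound} requires exactly \autoref{liminf conj}, which is the standing assumption here. All of the real work sits upstream, in the proof of \autoref{kIKTU lower bound}, where \eqref{pf-la formula 2}, \autoref{liminf conj}, and the eventual constancy of $\kakko(\LL^0 f_\la,\dex-\dey)$ in $\la$ (\autoref{key remark}) are combined.
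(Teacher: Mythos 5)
Your proposal is correct and is exactly the paper's argument: the paper introduces this corollary with "By combining this with \autoref{K < C}," i.e.\ it too obtains the chain by juxtaposing \autoref{kIKTU lower bound} (which uses \autoref{liminf conj}) with the unconditional upper bound and the $\tLipxy$-reduction from the proof of \autoref{K < C}. No further comment is needed.
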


Hence, if $f_\la\in\Lipxy$ for sufficiently small $\la>0$, then \autoref{IKTU conj} is true.

\begin{thm} \label{main thm}
If \autoref{liminf conj} is true and for sufficiently small $\la>0$,
\begin{equation}
\KD(x,y) = \sup_{f\in\tLipxy}\kakko(J_\la f,\dex-\dey) \label{KD restriction}
\end{equation}
holds, then \autoref{IKTU conj} is true.
\end{thm}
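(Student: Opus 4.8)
The plan is to establish the reverse of the inequality in \autoref{K < C}, that is, $\kIKTUxy \ge \CC(x,y)$; together with \autoref{K < C} this gives $\kIKTUxy = \CC(x,y)$, which is exactly \autoref{IKTU conj}. The role of hypothesis \eqref{KD restriction} is to guarantee that, for small $\la$, some $\la$-Kantorovich potential of $\KD(x,y)$ can be chosen inside $\tLipxy$; this is precisely the situation pointed out just before the statement (``if $f_\la\in\Lipxy$ for sufficiently small $\la$''), so the conclusion will then follow by feeding such a potential into \autoref{kIKTU lower bound}, which is available because \autoref{liminf conj} is assumed.

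Concretely, first I would fix $\la>0$ small enough that \eqref{KD restriction} holds. Since $\tLipxy$ is a bounded and closed subset of $\rv\cong\R^n$, hence compact, and the map $f\mapsto\kakko(\Jf,\dex-\dey)$ is continuous (by continuity of the resolvent $J_\la$), the supremum on the right-hand side of \eqref{KD restriction} is attained at some $f_\la\in\tLipxy$; by \eqref{KD restriction} this $f_\la$ satisfies $\kakko(\Jf_\la,\dex-\dey)=\KD(x,y)$, so it is a genuine $\la$-Kantorovich potential of $\KD(x,y)$. Shrinking $\la$ if necessary, \autoref{key remark} and \autoref{nota} give $\kakko(\LL^0f_\la,\dex-\dey)=\KIKTUxy$. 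On the other hand, as recorded in the proof of \autoref{K < C}, the infimum defining $d(x,y)\CC(x,y)$ may be taken over $\tLipxy$, so from $f_\la\in\tLipxy$ we get $\KIKTUxy=\kakko(\LL^0f_\la,\dex-\dey)\ge\inf_{f\in\tLipxy}\kakko(\LL^0f,\dex-\dey)=d(x,y)\CC(x,y)$. Finally, \autoref{kIKTU lower bound} (valid under \autoref{liminf conj}) yields $\kIKTUxy\ge\KIKTUxy/d(x,y)\ge\CC(x,y)$, and combining with \autoref{K < C} completes the argument.

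The one delicate point — which I would single out as the main obstacle — is the use of \autoref{key remark} for \emph{our} restricted maximizer $f_\la\in\tLipxy$: we need $\kakko(\LL^0f_\la,\dex-\dey)$ to equal the constant $\KIKTUxy$ for this particular family of potentials, which requires reading \autoref{key remark} and \cite[Proposition A.4]{IKTU} as saying that $\kakko(\LL^0f_\la,\dex-\dey)$ takes one and the same value for \emph{every} $\la$-Kantorovich potential once $\la$ is small enough, not merely for some distinguished choice. If instead only a selection were controlled, one would have to track how the maximizer $f_\la$ of the restricted supremum varies with $\la$ and rule out jumps, which is more delicate. Granting the stronger reading, the remaining ingredients — compactness of $\tLipxy$ together with continuity of $f\mapsto\kakko(\Jf,\dex-\dey)$ to secure attainment, and the reduction of the infimum defining $\CC(x,y)$ from $\Lipxy$ to $\tLipxy$ — are routine and already available in the excerpt.
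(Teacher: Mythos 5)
Your proposal is correct and follows essentially the same route as the paper: hypothesis \eqref{KD restriction} forces a $\la$-Kantorovich potential into $\tLipxy$, whence $\KIKTUxy\ge\inf_{f\in\tLipxy}\kakko(\LL^0f,\dex-\dey)=d(x,y)\CC(x,y)$, and combining \autoref{kIKTU lower bound} with \autoref{K < C} gives the equality. The ``delicate point'' you flag about \autoref{key remark} applying to every potential (not just a selected one) is a fair reading of \cite[Proposition A.4]{IKTU} and is exactly how the paper implicitly uses it.
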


\begin{proof}
If \eqref{KD restriction} holds, then by the definition of $\mathcal{K}_{\mathrm{IKTU}}(x,y)$, we have
\begin{equation*}
\frac{\KIKTUxy}{d(x,y)} 
\ge
\inf_{f\in\tLipxy} \frac{\kakko(\LL^0f,\dex-\dey)}{d(x,y)}.
\end{equation*}
Thus, we conclude that \autoref{IKTU conj} is true.
\end{proof}

\subsection{Weak Kantorovich difference} \label{wKD subsection}

Now, we consider a modification of the Kantorovich difference by restricting the range of the supremum as in \eqref{KD restriction}, based on the discussion in \autoref{IKTU conj subsection}. 

\begin{defi}[$\la$-weak Kantorovich difference] \label{wKD-def}
Let $\la>0$.
We define the \emph{$\la$-weak Kantorovich difference} $\wKD(x,y)$ between two vertices $x,y$ as 
\begin{equation}
\wKD(x,y)\:=\sup_{f\in\Lipxy}\kakko(\Jf,\dex-\dey). \label{eq-wKD-def}
\end{equation}
\end{defi}

Note that $\KD(x,y)\ge\wKD(x,y)$ holds by definition.
One can restrict the range of the supremum of the right-hand side of \eqref{eq-wKD-def} to $\tLipxy$ from the same proof as that for \cite[Proposition 3.3]{IKTU}.

\begin{prop} \label{distance}
The following hold:
\begin{itemize}
\item[$(1)$] $\wKD(x,y)\ge0$ holds for any $\la>0$ and $x,y\in V$.
Moreover, for sufficiently small $\la>0$, $\wKD(x,y)=0$ holds if and only if $x=y$.
\item[$(2)$] $\wKD(x,y)=\wKD(y,x)$ holds for any $\la>0$ and $x,y\in V$.
\item[$(3)$] $\wKD(x,z)\le\wKD(x,y)+\wKD(y,z)$ holds for any $\la>0$ and $x,y,z\in V$ with $d(x,z)=d(x,y)+d(y,z)$.
\end{itemize}
\end{prop}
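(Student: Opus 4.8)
The plan is to verify $(1)$--$(3)$ by adapting the proof that the $\la$-Kantorovich difference is a distance (\cite[Proposition 3.5]{IKTU}), keeping careful track of the constraint $\kakko(f,\dex-\dey)=d(x,y)$ that distinguishes $\Lipxy$ from $\Lip$. Two standing facts are used throughout. First, $J_\la$ is positively homogeneous: from $\LL(cf)=c\LL(f)$ one gets $(I+\la\LL)(cg)=c(I+\la\LL)(g)$, hence $J_\la(cf)=cJ_\la f$ for all $c\in\R$. Second, as noted right after \eqref{eq-wKD-def}, the supremum defining $\wKD(x,y)$ is unchanged if $\Lipxy$ is replaced by $\tLipxy$.

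For $(2)$, note that $f\mapsto-f$ is a bijection from $\Lipxy$ onto $\mathsf{Lip}_w^1(V;y,x)$: one has $-f\in\Lip$ whenever $f\in\Lip$, and $\kakko(-f,\dey-\dex)=\kakko(f,\dex-\dey)$, so $\kakko(f,\dex-\dey)=d(x,y)$ is equivalent to $\kakko(-f,\dey-\dex)=d(y,x)$. Combining this with $J_\la(-f)=-J_\la f$ gives $\kakko(\Jf,\dex-\dey)=\kakko(J_\la(-f),\dey-\dex)$, and passing to the supremum over $f\in\Lipxy$ yields $\wKD(x,y)=\wKD(y,x)$.

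For $(3)$, suppose $d(x,z)=d(x,y)+d(y,z)$ and let $f\in\mathsf{Lip}_w^1(V;x,z)$. Since $f\in\Lip$ we have $\kakko(f,\dex-\dey)\le d(x,y)$ and $\kakko(f,\dey-\dez)\le d(y,z)$; adding these and comparing with $\kakko(f,\dex-\dez)=d(x,z)=d(x,y)+d(y,z)$ forces both to be equalities, so $f\in\Lipxy\cap\Lipyz$. Hence $\kakko(\Jf,\dex-\dey)\le\wKD(x,y)$ and $\kakko(\Jf,\dey-\dez)\le\wKD(y,z)$, and since $\kakko(\Jf,\dex-\dez)=\kakko(\Jf,\dex-\dey)+\kakko(\Jf,\dey-\dez)$ by bilinearity of $\kakko(\cdot,\cdot)$, taking the supremum over $f\in\mathsf{Lip}_w^1(V;x,z)$ gives $\wKD(x,z)\le\wKD(x,y)+\wKD(y,z)$. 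The hypothesis $d(x,z)=d(x,y)+d(y,z)$ is used precisely in the step upgrading $f\in\mathsf{Lip}_w^1(V;x,z)$ to $f\in\Lipxy\cap\Lipyz$.

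It remains to prove $(1)$, which is where I expect the real difficulty. If $x=y$, the constraint defining $\Lipxy$ is vacuous, so $\Lipxy=\Lip$ and $\wKD(x,x)=\sup_{f\in\Lip}\kakko(\Jf,0)=0$. Now fix $x\neq y$. The set $\Lipxy$ is nonempty (for instance $v\mapsto d_v\,d(v,y)$ lies in it), so, fixing one such $f$ and invoking \eqref{L0-limit}, we get $\kakko(f-\Jf,\dex-\dey)=\la\kakko(\LL^0f,\dex-\dey)+o(\la)\to0$, whence $\kakko(\Jf,\dex-\dey)=d(x,y)-\kakko(f-\Jf,\dex-\dey)>0$ for all sufficiently small $\la>0$; together with the case $x=y$, this establishes ``$\wKD(x,y)=0$ if and only if $x=y$'' for small $\la$. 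The genuinely substantial claim is non-negativity, $\wKD(x,y)\ge0$, for \emph{all} $\la>0$. My primary approach is a maximum principle for the resolvent: from the variational characterization $\Jf=\argmin_{g\in\rv}\{\|f-g\|_{D^{-1}}^2/(2\la)+\mathcal{E}(g)\}$, replacing a competitor $g$ by its coordinatewise truncation to $[\min_v\kakko(f,\dev),\max_v\kakko(f,\dev)]$ does not increase either $\|f-g\|_{D^{-1}}^2$ or $\mathcal{E}(g)$, which forces $\min_v\kakko(f,\dev)\le\kakko(\Jf,\dev)\le\max_v\kakko(f,\dev)$ for all $v\in V$. One then feeds in a carefully chosen test function in $\Lipxy$, e.g.\ the $f$ with $\kakko(f,\dev)=d_v\max\{0,d(x,y)-d(x,v)\}$ (for which $x$ is the unique maximizer of $\kakko(f,\cdot)$ and $y$ attains the minimum), and argues that $\kakko(\Jf,\dex-\dey)\ge0$. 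An alternative is to show that $\la\mapsto\wKD(x,y)$ is non-increasing --- via the resolvent identity $J_\la f=J_\mu\big(\tfrac{\mu}{\la}f+(1-\tfrac{\mu}{\la})J_\la f\big)$ for $0<\mu<\la$ --- and that $\wKD(x,y)\to0$ as $\la\to\infty$ (because $J_\la f$ converges to a scalar multiple of $(d_v)_{v\in V}$, on which $\kakko(\cdot,\dex-\dey)$ vanishes), so that non-negativity follows in the limit. I expect the obstacle in either route to be the same: the plain maximum principle only controls the global extrema of $\Jf$, so extracting the sharp inequality $\kakko(\Jf,\dex-\dey)\ge0$ valid for \emph{every} $\la>0$, rather than only for small $\la$, is the crux.
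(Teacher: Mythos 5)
Your arguments for $(2)$ and $(3)$ are correct and coincide with the paper's. The paper declares $(2)$ ``trivial by the definition''; your $f\mapsto -f$ bijection together with $J_\la(-f)=-J_\la f$ is exactly the right justification. For $(3)$ the paper uses the same near-maximizer $f_{xz}\in\mathsf{Lip}_w^1(V;x,z)$ and splits $\kakko(\Jf_{xz},\dex-\delta_z)$ at $y$; it simply asserts $f_{xz}\in\Lipxy\cap\Lipyz$ ``by the assumption'', and your verification that the two Lipschitz inequalities are forced to be equalities when $d(x,z)=d(x,y)+d(y,z)$ is the correct (and only) content of that step. Likewise, in $(1)$ your treatment of $\wKD(x,x)=0$, of the nonemptiness of $\Lipxy$, and of strict positivity for small $\la$ via $J_\la f\to f$ (plus finiteness of $V$ to get a uniform $\la$) is exactly the paper's argument.

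The one genuine gap is the claim $\wKD(x,y)\ge0$ for \emph{every} $\la>0$, which you correctly isolate as the crux but do not prove: the maximum principle you derive only pins $\kakko(\Jf,\dev)$ between the global extrema of $f$ and cannot by itself order $\kakko(\Jf,\dex)$ against $\kakko(\Jf,\dey)$, and the monotonicity-in-$\la$ route is only sketched. So, as written, the proposal does not establish this inequality. You should be aware, however, that the paper itself offers nothing more than ``By definition, we have $\wKD(x,y)\ge0$'' --- and this is \emph{not} immediate from the definition in the way it is for $\KD(x,y)$, where one tests with $f=0\in\Lip$ and uses $J_\la 0=0$; here $0\notin\Lipxy$ when $x\neq y$, and, as your own analysis shows, an individual $f\in\Lipxy$ need not satisfy $\kakko(\Jf,\dex-\dey)\ge0$ for large $\la$. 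So your difficulty points at a real lacuna in the paper's one-line justification rather than at an easy argument you overlooked; to submit a complete proof you would still need to close this step, e.g.\ by carrying one of your two strategies through for a specific test function in $\Lipxy$.
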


\begin{proof} 
\hspace{0mm}
\begin{itemize}
\item[$(1)$] 
By definition, we have $\wKD(x,y)\ge0$ and $\wKD(x,x)=0$ for any $x,y\in V$.
Moreover, if $x\neq y$, then $\wKD(x,y)>0$ holds for sufficiently small $\la>0$ since $\lim_{\la\dto0}\Jf=f$ implies $\kakko(\Jf,\dex-\dey)>0$ for $f\in\Lipxy$.
Since $V$ is finite, we can choose $\la>0$ such that $\wKD(x,y)>0$ for all $x,y\in V$ with $x\neq y$.
\item[$(2)$] 
This is trivial by the definition.
\item[$(3)$] 
For any $\eps>0$, there exists a function $f_{xz}\in\mathsf{Lip}_w^1(V;x,z)$ such that 
\begin{equation*}
\wKD(x,z)
\le
\kakko(\Jf_{xz},\dex-\delta_z)+\eps
=
\kakko(\Jf_{xz},\dex-\dey)+\kakko(\Jf_{xz},\dey-\delta_z)+\eps.
\end{equation*}
Now, since $f_{xz}\in\Lipxy\cap\mathsf{Lip}_w^1(V;y,z)$ by the assumption, we find
\begin{equation*}
\kakko(\Jf_{xz},\dex-\dey) \le \wKD(x,y),
\quad
\kakko(\Jf_{xz},\dey-\delta_z) \le \wKD(y,z).
\end{equation*}
Thus, $\wKD(x,z)\le\wKD(x,y)+\wKD(y,z)$ holds since $\eps>0$ was arbitrary.
\end{itemize}
\end{proof}

It is unclear whether the weak Kantorovich difference always satisfies the triangle inequality.

\begin{prop}
For any $\la>0$ and $x,y\in V$, there exists a function $f_\la\in\Lipxy$ which attains the supremum of the right-hand side of \eqref{eq-wKD-def}.
\end{prop}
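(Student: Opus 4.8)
The statement to prove is that the supremum in \eqref{eq-wKD-def} defining $\wKD(x,y)$ is attained by some $f_\la\in\Lipxy$. The plan is to follow exactly the argument used in \cite{IKTU} for the existence of a $\la$-Kantorovich potential (\cite[Proposition 3.7]{IKTU}), adapting it to the restricted feasible set $\Lipxy$. First I would reduce the supremum over $\Lipxy$ to a supremum over the smaller set $\tLipxy = \{f\in\tLip \mid \kakko(f,\dex-\dey)=d(x,y)\}$, which is legitimate by the remark immediately preceding this proposition (the same proof as \cite[Proposition 3.3]{IKTU} applies: given any $f\in\Lipxy$, subtracting a constant does not change $\kakko(\Jf,\dex-\dey)$ because, by the definition of $J_\la$ via $\mathcal{E}$ and the fact that $\mathcal{E}$ is translation-invariant, $J_\la(f+c\mathbf{1})=J_\la f+c\mathbf{1}$, and one then normalizes so that $\max_v\kakko(f,\delta_v)\le\diam(H)$).

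Next I would observe that $\tLipxy$ is a closed and bounded subset of the finite-dimensional space $\rv$, hence compact. Closedness: the weighted $1$-Lipschitz conditions $\kakko(f,\dex-\dey)\le d(x,y)$ for all pairs, the equality constraint $\kakko(f,\dex-\dey)=d(x,y)$, and the bound $\max_v\kakko(f,\delta_v)\le\diam(H)$ are all defined by (non-strict) linear inequalities and one linear equation in the coordinates of $f$, so the set is closed; together with the bound it is compact. Boundedness also follows directly since each coordinate $\kakko(f,\delta_v)=f(v)/d_v$ is pinned between (essentially) $-\diam(H)$ and $\diam(H)$ after the normalization. Then, since $J_\la$ is single-valued and continuous by \cite[Lemma 2.1]{IKTU}, the functional $f\mapsto\kakko(\Jf,\dex-\dey)$ is continuous on $\rv$, hence attains its maximum on the compact set $\tLipxy$. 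The maximizer $f_\la\in\tLipxy\subset\Lipxy$ is the desired potential, and it realizes the supremum in \eqref{eq-wKD-def}.

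The only point requiring any care — and the main (mild) obstacle — is verifying that restricting from $\Lipxy$ to $\tLipxy$ does not decrease the supremum, i.e. that the translation step is compatible with the extra equality constraint $\kakko(f,\dex-\dey)=d(x,y)$. This is immediate because translating $f$ by a constant leaves $\kakko(f,\dex-\dey)$ unchanged, so the equality constraint is preserved under the normalization; thus $f\in\Lipxy$ implies its normalization lies in $\tLipxy$, and the value $\kakko(\Jf,\dex-\dey)$ is unchanged. After this observation the existence is a routine compactness argument, and I would present it as such, citing \cite[Proposition 3.3]{IKTU} and \cite[Lemma 2.1]{IKTU} for the two ingredients rather than reproving them.
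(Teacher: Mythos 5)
Your argument is correct and is exactly what the paper intends: the paper's entire proof is the one-line citation ``this follows from the same proof as that for [IKTU, Proposition~3.7],'' and your reduction to the compact set $\tLipxy$ (noting that the normalization preserves the equality constraint $\kakko(f,\dex-\dey)=d(x,y)$) followed by continuity of $J_\la$ is precisely that argument adapted to the restricted feasible set. The one point you rightly flag — compatibility of the translation step with the equality constraint — is indeed the only new ingredient, and your justification of it is sound.
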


\begin{proof}
This follows from the same proof as that for \cite[Proposition 3.7]{IKTU}.
\end{proof}

We call this $f_\la\in\Lipxy$ a \emph{$\la$-weak Kantorovich potential} of $\wKD(x,y)$.

We finally discuss the validity of the definition of the weak Kantorovich difference in respect of its relation to the $L^1$-Wasserstein distance on graphs.
Let us first recall the relation between the Kantorovich difference and the $L^1$-Wasserstein distance.

\begin{fact}[see \eqref{wxy formula}, {\cite[Equation (4.2)]{IKTU}}]
Let $G=(V,E)$ be a graph.
Then we have, as $\la\dto0$,
\begin{equation*}
W_1\big( \rwx,\rwy \big)
= \KD(x,y) + o(\la). 
\end{equation*}
\end{fact}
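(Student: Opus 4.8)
The plan is to prove the asymptotic identity $W_1(\rwx,\rwy) = \KD(x,y) + o(\la)$ by exhibiting the two quantities as suprema over essentially the same function class, with integrands differing only at order $o(\la)$. First I would recall the linear (graph) case: since $H=G$ is a graph, the hypergraph Laplacian $\LL$ restricts to the ordinary graph Laplacian $\Delta$, which is single-valued and linear, so $J_\la f = (I+\la\Delta)^{-1}f$ is the genuine resolvent. The starting point is the duality formula \eqref{wxy formula}, i.e. $W_1(\rwx,\rwy) = \sup_{f\in\mathsf{Lip}^1(V)}\kakko((I-\la\Delta)f,\dex-\dey)$ after rewriting the canonical inner product appearing there in terms of $\kakko(\cdot,\cdot)$ via the degree weights (one must be careful here: the transition measure in \autoref{random walk} carries the $1/d_x$ normalization, which is exactly what converts $\langle\cdot,\cdot\rangle$ into $\kakko(\cdot,\cdot)$, so that $\sum_v f(v)(\rwx(v)-\rwy(v)) = \kakko((I-\la\Delta)f,\dex-\dey)$; this bookkeeping should be stated explicitly).

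Next I would compare the two resolvent-type expressions. On one side, $\KD(x,y) = \sup_{f\in\Lip}\kakko(J_\la f,\dex-\dey)$, and for a graph the weighted $1$-Lipschitz class $\Lip$ coincides with the usual $1$-Lipschitz class, so the only difference from $W_1$ is $(I-\la\Delta)$ versus $(I+\la\Delta)^{-1}$. The key estimate is the resolvent expansion: for the linear operator $\Delta$ one has $(I+\la\Delta)^{-1} = I - \la\Delta + \la^2\Delta^2(I+\la\Delta)^{-1}$, so $\kakko((I+\la\Delta)^{-1}f,\dex-\dey) = \kakko((I-\la\Delta)f,\dex-\dey) + \la^2\,\kakko(\Delta^2(I+\la\Delta)^{-1}f,\dex-\dey)$. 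I would bound the last term uniformly in $f$ over the relevant (bounded) class — by \cite[Propositions 3.5 and 3.3]{IKTU} one may restrict the supremum defining $\KD$ to $\tLip$, which is a bounded subset of $\rv$; the operators $\Delta^2$ and $(I+\la\Delta)^{-1}$ are bounded (the latter uniformly for $\la\in(0,1]$, being a contraction), hence the error term is $O(\la^2)$ uniformly, and therefore taking suprema over the common bounded class and letting $\la\dto0$ yields $\KD(x,y) = W_1(\rwx,\rwy) + O(\la^2) = W_1(\rwx,\rwy) + o(\la)$. One should also note that the supremum defining $W_1$ may likewise be restricted to the same bounded class (by translation invariance $f\mapsto f-f(y)$ and boundedness of $V$), so the two suprema are genuinely over the same set.

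The main obstacle I expect is the uniform control of the error term over the function class while interchanging the $O(\la^2)$ bound with the supremum: one must ensure the class over which the supremum is taken is the same bounded set in both expressions and that the $O(\la^2)$ constant depends only on the graph data (the weights $w_e$, the degrees $d_v$, the diameter), not on $f$. This is where restricting to $\tLip$ (respectively $\tLipxy$-type truncations for $W_1$) does the real work. A minor additional point is simply verifying that \eqref{wxy formula}, which the excerpt states for $W_1$, already encodes the first equality $W_1 = \sup_f \kakko((I-\la\Delta)f,\dex-\dey)$ exactly (no error), so that all of the $o(\la)$ discrepancy is located in the resolvent expansion step. Once these uniformity issues are handled, the conclusion is immediate.
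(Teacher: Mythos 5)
Your strategy is the right one and is, in outline, exactly what lies behind \eqref{wxy formula} (which the paper cites from [IKTU] rather than proves): the duality formula gives the exact identity $W_1\big(\rwx,\rwy\big)=\sup_{f\in\mathsf{Lip}^1(V)}\big\langle(I-\la\Delta)f,\dex-\dey\big\rangle$, and the resolvent identity $(I+\la\Delta)^{-1}=I-\la\Delta+\la^2\Delta^2(I+\la\Delta)^{-1}$, together with a uniform bound over a bounded normalization of the Lipschitz class, gives an $O(\la^2)=o(\la)$ discrepancy. That part is sound.

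However, the two ``bookkeeping'' identities you assert are both false as written, and for the same reason. First, $\sum_v f(v)\big(\rwx(v)-\rwy(v)\big)=(I-\la\Delta)f(x)-(I-\la\Delta)f(y)$ is the \emph{canonical} pairing $\big\langle(I-\la\Delta)f,\dex-\dey\big\rangle$, not $\big\langle(I-\la\Delta)f,\dex-\dey\big\rangle_{\Di}$: the factor $1/d_x$ in $\rwx$ sits at the source vertex and is absorbed into $\Delta$; it does not produce the $1/d_v$ weights of the weighted inner product. Second, for a graph $\Lip$ is \emph{not} the usual class $\mathsf{Lip}^1(V)$: $f\in\Lip$ means $f(x)/d_x-f(y)/d_y\le d(x,y)$, i.e.\ $D^{-1}f\in\mathsf{Lip}^1(V)$, where $Dg(v)\:=d_vg(v)$. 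The correct bridge between the two pictures is the conjugation $\LL=D\Delta D^{-1}$ (immediate from \autoref{LL} when $\#e=2$), hence $J_\la=D(I+\la\Delta)^{-1}D^{-1}$ and $\kakko(\Jf,\dex-\dey)=\big\langle(I+\la\Delta)^{-1}(D^{-1}f),\dex-\dey\big\rangle$. Since $f\mapsto D^{-1}f$ is a bijection from $\Lip$ onto $\mathsf{Lip}^1(V)$, this yields $\KD(x,y)=\sup_{g\in\mathsf{Lip}^1(V)}\big\langle(I+\la\Delta)^{-1}g,\dex-\dey\big\rangle$ exactly, after which your expansion argument applies verbatim (normalize $g(y)=0$ so that $\|g\|_\infty\le\diam(G)$ and the $O(\la^2)$ constant depends only on the graph data). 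So the proof is salvageable, but the step you deferred as routine is precisely where your two stated identities break: it must be carried out via the diagonal rescaling $D$, not by identifying the two inner products or the two Lipschitz classes with each other.
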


Furthermore, for the range of the supremum in the right-hand side in \eqref{W1-dist}, the following is known.

\begin{fact}[The complementary slackness {\cite[Lemma 3.1]{BCLMP}}] \label{Complementary slackness}
Let $G=(V,E)$ be a graph, $x,y\in V$ and $\la\in[0,1]$.
Then, any maximizer $f_\la^{W_1}\in\mathsf{Lip}^1(V)$ attaining $W_1\big(m_x^\la,m_y^\la\big)$ satisfies 
\begin{equation*}
f_\la^{W_1}(x)-f_\la^{W_1}(y)=d(x,y). 
\end{equation*}
\end{fact}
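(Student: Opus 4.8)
The plan is to prove this by perturbing an optimal Kantorovich potential, exploiting the atomic structure of the lazy walk measures. First I would fix a maximizer $f := f_\la^{W_1}\in\mathsf{Lip}^1(V)$ in the defining formula \eqref{W1-dist} for $\W(\rwx,\rwy)$, and, since that objective is unchanged by adding a constant to the test function, normalize $f(y)=0$. Because $f$ is $1$-Lipschitz on $(V,d)$ we then have $f(v)\le f(y)+d(v,y)=d(v,y)$ for every $v\in V$, and in particular $f(x)\le d(x,y)$; the whole content of the statement is that the strict inequality $f(x)<d(x,y)$ cannot occur.

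So the core step is: assuming $f(x)<d(x,y)$, I would construct a strictly better competitor. For $0<\eps<d(x,y)-f(x)$ set $f_\eps(v):=\min\{f(v)+\eps,\, d(v,y)\}$. This $f_\eps$ lies in $\mathsf{Lip}^1(V)$, being the pointwise minimum of the two $1$-Lipschitz functions $f+\eps$ and $d(\cdot,y)$; moreover $f\le f_\eps\le f+\eps$ on $V$ (the lower bound is exactly $f\le d(\cdot,y)$), $f_\eps(y)=0$, and $f_\eps(x)=f(x)+\eps$ by the choice of $\eps$. Now I would decompose $\rwx-\rwy=(1-\la)(\dex-\dey)+\la(m_x^1-m_y^1)$, where $m_x^1, m_y^1\in\PP(V)$ are the one-step walk measures (i.e. $\rwx,\rwy$ with $\la=1$ in \autoref{random walk}), and pair $f_\eps-f$ against it: the atomic part contributes $(1-\la)\big[(f_\eps-f)(x)-(f_\eps-f)(y)\big]=(1-\la)\eps$, while the one-step part equals $\la$ times the difference of the $m_x^1$- and $m_y^1$-expectations of $f_\eps-f$, each of which lies in $[0,\eps]$ since $0\le f_\eps-f\le\eps$; hence the one-step part lies in $[-\la\eps,\la\eps]$. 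Adding these, the value of the objective at $f_\eps$ exceeds its value at $f$ by at least $(1-\la)\eps-\la\eps=(1-2\la)\eps$, which is strictly positive once $\la<1/2$, contradicting the maximality of $f$. Therefore $f(x)=d(x,y)$, i.e. $f(x)-f(y)=d(x,y)$.

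The main difficulty, to my mind, is concentrated in the second paragraph: choosing a perturbation that is simultaneously $1$-Lipschitz and strictly improving (the cap $\min\{f+\eps,\,d(\cdot,y)\}$ is what achieves both), and then closing the improvement estimate, which works only because the atom of $\rwx$ at $x$ and of $\rwy$ at $y$ dominate the one-step spread; accordingly the statement should be used in the small-$\la$ regime $\la<1/2$ (the conclusion genuinely fails for $\la$ near $1$, already on a single weighted edge). An alternative I would keep in reserve is the duality route suggested by the name of the statement: strong linear-programming duality together with complementary slackness for the transport problem behind $W_1$ reduces the claim to exhibiting one $W_1$-optimal transport plan that moves positive mass directly from $x$ to $y$, and for small $\la$ such a plan can be produced from an arbitrary optimal one by cycle-cancellation, using that every unit of mass leaving $x$, and every unit arriving at $y$, travels distance at least $d(x,y)-1$. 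But the perturbation argument is shorter and avoids optimal transport machinery, so that is the route I would take.
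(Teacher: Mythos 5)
The paper does not actually prove this statement: it is imported verbatim as a Fact from \cite{BCLMP} (Lemma 3.1 there), so there is no in-paper argument to compare yours against. Judged on its own, your perturbation proof is correct where it applies, and every step checks out: the normalization $f(y)=0$ gives $f\le d(\cdot,y)$, the cap $f_\eps=\min\{f+\eps,d(\cdot,y)\}$ is $1$-Lipschitz and satisfies $0\le f_\eps-f\le\eps$ with $f_\eps(x)-f(x)=\eps$ and $f_\eps(y)=f(y)$, and pairing $f_\eps-f$ against $\rwx-\rwy=(1-\la)(\dex-\dey)+\la(m_x^1-m_y^1)$ yields a gain of at least $(1-2\la)\eps>0$ for $\la<1/2$. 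You are also right, and it is worth saying explicitly, that the statement as printed in the paper cannot hold for all $\la\in[0,1]$: on $K_2$ one has $\rwx-\rwy=(1-2\la)(\dex-\dey)$, so for $\la>1/2$ every maximizer satisfies $f(x)-f(y)=-d(x,y)$, and at $\la=1/2$ every $1$-Lipschitz function is a maximizer. The cited lemma in \cite{BCLMP} carries the corresponding restriction on the idleness parameter, which the present paper's restatement drops; this is harmless for the paper's purposes, since the Fact is only invoked in the $\la\dto0$ regime (to identify the $\wIKTU$ and LLY curvatures on graphs), and your proof covers exactly that regime. Your reserve route via LP duality and complementary slackness on an optimal plan moving mass directly from $x$ to $y$ is essentially the argument in \cite{BCLMP} (whence the name of the Fact); your perturbation argument is more self-contained and is a perfectly good replacement, provided the hypothesis is stated as $\la<1/2$ (or ``for sufficiently small $\la$'') rather than $\la\in[0,1]$.
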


Although there is not a clear relation between the potentials $f_\la^{W_1}$ for $W_1\big(\rwx,\rwy\big)$ and $f_\la$ for $\KD(x,y)$, we expect that the $\la$-Kantorovich potential $f_\la$ satisfies $f_\la\in\Lipxy$, namely the Kantorovich difference and the weak Kantorovich difference coincide.
This observation motivated our introduction of the weak Kantorovich difference $\wKD(x,y)$.
In addition, the weak Kantorovich difference has the merit of better computability than the Kantorovich difference.

\subsection{$\wIKTU$ curvature} \label{wIKTU subsection}

We conclude this section by defining the $\wIKTU$ curvature associated with the weak Kantorovich difference.

\begin{defi}[Weak Ikeda--Kitabeppu--Takai--Uehara curvature of hypergraphs] \label{IKTU-type curv}
We define the \emph{weak Ikeda--Kitabeppu--Takai--Uehara curvature} (\emph{$\wIKTU$ curvature}) $\kxy$ along two vertices $x,y$  as
\begin{equation*}
\kxy\:=\lim_{\la\dto0}\frac{1}{\la}\left( 1-\frac{\wKD(x,y)}{d(x,y)} \right).
\end{equation*}
\end{defi}

The limit in the right-hand side exists by the same proof as that for \cite[Theorem A.1]{IKTU}.
By the same discussion as \cite[Proposition 4.1]{IKTU}, thanks to \autoref{Complementary slackness}, the LLY curvature and the $\wIKTU$ curvature coincide for graphs.
Note that from the relation between $\KD(x,y)$ and $\wKD(x,y)$, for any two vertices $x,y$, 
\begin{equation}
\kIKTUxy\le\kxy. \label{kIKTU < k}
\end{equation}
Notice that if \eqref{KD restriction} holds, then this inequality becomes equality, that is, $\kIKTUxy=\kxy$ holds.

By the discussion in \autoref{IKTU conj subsection}, we can show an analogue of \autoref{IKTU conj} provided that \autoref{liminf conj} is true.

\begin{thm} \label{main theorem}
If \autoref{liminf conj} is true, then for any $x,y\in V$, it holds that
\begin{equation*}
\kxy=\CC(x,y).
\end{equation*}
\end{thm}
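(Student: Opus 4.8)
The plan is to prove the two inequalities $\kxy\le\CC(x,y)$ and $\kxy\ge\CC(x,y)$ separately; only the second will use \autoref{liminf conj}. The first is essentially already contained in the proof of \autoref{K < C}: after replacing $f$ by $f-f(y)$ the infimum defining $\CC(x,y)$ runs over $\tLipxy$, and since the supremum in \eqref{eq-wKD-def} may be restricted to $\tLipxy$ (by the argument for \cite[Proposition 3.3]{IKTU}), the computation there, carried through line \eqref{lim,-sup}, yields
\begin{equation*}
d(x,y)\,\CC(x,y)\ \ge\ \lim_{\la\dto0}\frac{1}{\la}\Big(d(x,y)-\sup_{f\in\tLipxy}\kakko(\Jf,\dex-\dey)\Big)\ =\ d(x,y)\,\kxy ,
\end{equation*}
the last equality because the supremum over $\tLipxy$ equals $\wKD(x,y)$.

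For the reverse inequality I would mimic the proof of \autoref{kIKTU lower bound}, the decisive difference being that a $\la$-weak Kantorovich potential automatically lies in $\Lipxy$, hence (after restricting the range of the supremum) in $\tLipxy$; this is precisely what upgrades the resulting bound from $\KIKTUxy/d(x,y)$ to $\CC(x,y)$. Concretely, I would fix $x\neq y$ and, for small $\la>0$, take a $\la$-weak Kantorovich potential $f_\la\in\tLipxy$ of $\wKD(x,y)$. Since $\kakko(f_\la,\dex-\dey)=d(x,y)$, identity \eqref{pf-la formula 2} gives
\begin{equation*}
d(x,y)-\kakko(\Jf_\la,\dex-\dey)=\kakko(\LL^0f_\la,\dex-\dey)\,\la+\kakko(\psf_\la,\dex-\dey),
\end{equation*}
so, dividing by $\la$ and letting $\la\dto0$ (the limit exists by the analogue of \cite[Theorem A.1]{IKTU}),
\begin{equation*}
d(x,y)\,\kxy=\lim_{\la\dto0}\Bigg(\kakko(\LL^0f_\la,\dex-\dey)+\frac{\kakko(\psf_\la,\dex-\dey)}{\la}\Bigg).
\end{equation*}
I would then bound the two summands from below term by term: $\kakko(\LL^0f_\la,\dex-\dey)\ge d(x,y)\,\CC(x,y)$ because $f_\la\in\tLipxy$, and $\kakko(\psf_\la,\dex-\dey)/\la\ge\inf_{f\in\tLip}\kakko(\psf,\dex-\dey)/\la$ because $f_\la\in\tLip$; passing to $\liminf$ and applying \autoref{liminf conj} to kill the second term gives $\kxy\ge\CC(x,y)$, which together with the first step completes the proof.

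I do not foresee a genuinely hard step, since \autoref{liminf conj} is assumed; the one point that needs care is the interchange of $\lim_{\la\dto0}$ with $\inf_f$, which the hypothesis licenses but which must not be invoked prematurely. The argument above sidesteps this by working with the fixed family $(f_\la)$ and using only the legitimate term-by-term lower bounds together with the elementary fact that $\liminf_{\la\dto0}\big(c+g(\la)\big)=c+\lim_{\la\dto0}g(\la)$ whenever the latter limit exists. Finally, tracking when the inequalities above become equalities should show that, under \autoref{liminf conj}, every $\la$-weak Kantorovich potential of $\wKD(x,y)$ is an asymptotic minimizer of $\CC(x,y)$ — the content of \autoref{main corollary}.
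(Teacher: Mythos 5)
Your proof is correct and follows essentially the same route as the paper: the upper bound $\kxy\le\CC(x,y)$ is the paper's \eqref{C>Kxy}, and the lower bound is the paper's \eqref{K=f^la} combined with the observation that $f_\la\in\Lipxy$ forces $\kakko(\LL^0f_\la,\dex-\dey)\ge d(x,y)\CC(x,y)$. The only (harmless) difference is that you bound $\kakko(\LL^0f_\la,\dex-\dey)$ termwise by $d(x,y)\CC(x,y)$ rather than invoking its constancy for small $\la$ (\autoref{key remark}) to identify the limit with $\Kxy$ and then close the chain of inequalities.
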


\begin{proof}
We note that $\la$-weak Kantorovich potentials of $\wKD(x,y)$ have the same property as \autoref{key remark}.
Thus, for sufficiently small $\la>0$, we can denote $\kakko(\LL^0f_\la,\dex-\dey)$ by a constant $\Kxy$ that does not depend on $\la$ as with \autoref{nota}.
Then, on the one hand, in the same way as \autoref{kIKTU lower bound}, we obtain
\begin{align}
d(x,y)\kxy 
&= 
\lim_{\la\dto0}\Bigg(\kakko(\LL^0f_\la,\dex-\dey)+\frac{\dis\kakko(\psf_\la,\dex-\dey)}{\la}\Bigg) & \notag \\
&\ge 
\lim_{\la\dto0}\Bigg(\kakko(\LL^0f_\la,\dex-\dey)+\inf_{f\in\tLip}\frac{\dis\kakko(\psf,\dex-\dey)}{\la}\Bigg) & \notag \\
&= 
\Kxy+\lim_{\la\dto0}\inf_{f\in\tLip}\frac{\kakko(\psf,\dex-\dey)}{\la} & \notag \\
&=
\Kxy, &\Big(\text{by \autoref{liminf conj}}\Big)
\label{K=f^la} 
\end{align}
where $f_\la\in\tLipxy$ is a $\la$-weak Kantorovich potential of $\wKD(x,y)$.

On the other hand, by the same proof as that for \autoref{K < C}, we obtain
\begin{align}
d(x,y)\CC(x,y)
&= 
\inf_{f\in\tLipxy}\lim_{\la\dto0}\frac{d(x,y)-\kakko(\Jf,\dex-\dey)}{\la} & \notag \\
&= 
\lim_{\la\dto0}\frac{d(x,y)-\kakko(J_\la g,\dex-\dey)}{\la} &\Big( g\in\tLipxy:\text{ a minimizer} \Big)\notag \\
&\ge \lim_{\la\dto0}\frac{1}{\la}\Bigg( d(x,y)-\sup_{f\in\tLipxy}\kakko(\Jf,\dex-\dey) \Bigg) & \notag \\
&= d(x,y)\kxy. & \label{C>Kxy} 
\end{align}

Hence, we conclude that 
\begin{equation*}
\frac{\Kxy}{d(x,y)}
\stackrel{\text{\eqref{K=f^la}}}{\le}
\kxy
\stackrel{\text{\eqref{C>Kxy}}}{\le}
\mathcal{C}(x,y)
=
\inf_{f\in\Lipxy} \frac{\kakko(\LL^0f,\dex-\dey)}{d(x,y)} 
\end{equation*}
and these inequalities are in fact equality since $f_\la\in\Lipxy$.
\end{proof}

\begin{cor} \label{main corollary}
If \autoref{liminf conj} is true, then $\la$-weak Kantorovich potentials of $\wKD(x,y)$ are minimizers of $\mathcal{C}(x,y)$.
Precisely, we have
\begin{equation*}
\CC(x,y)
=
\frac{\Kxy}{d(x,y)}
=
\frac{\kakko(\LL^0f_\la,\dex-\dey)}{d(x,y)},
\end{equation*}
where $f_\la\in\tLipxy$ is a $\la$-weak Kantorovich potential of $\wKD(x,y)$ for sufficiently small $\la>0$.
\end{cor}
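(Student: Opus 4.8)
The plan is to read the corollary off directly from the equality case already worked out inside the proof of \autoref{main theorem}; no new analytic input is needed. First I would recall that, just as in \autoref{key remark} (via the analogue of \cite[Proposition A.4]{IKTU} for the weak Kantorovich difference), the quantity $\kakko(\LL^0 f_\la,\dex-\dey)$ does not depend on $\la$ once $\la>0$ is small enough, where $f_\la\in\tLipxy$ is a $\la$-weak Kantorovich potential of $\wKD(x,y)$; this constant is exactly what was named $\Kxy$ in the proof of \autoref{main theorem}. That proof, assuming \autoref{liminf conj}, established the sandwich
\[
\frac{\Kxy}{d(x,y)}\;\le\;\kxy\;\le\;\CC(x,y)=\inf_{f\in\Lipxy}\frac{\kakko(\LL^0 f,\dex-\dey)}{d(x,y)},
\]
and then observed that, because $f_\la\in\Lipxy$, the two inequalities collapse to equalities; in particular $\Kxy=d(x,y)\,\CC(x,y)$.

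Next I would spell out why this forces $f_\la$ to be a minimizer. Since a $\la$-weak Kantorovich potential lies in $\Lipxy$ (indeed in $\tLipxy$ after replacing $f$ by $f-f(y)$, as in the reduction used in the proof of \autoref{K < C}), it is an admissible competitor in the infimum defining $\CC(x,y)$, so $\CC(x,y)\le\kakko(\LL^0 f_\la,\dex-\dey)/d(x,y)=\Kxy/d(x,y)$. Combined with the equality $\Kxy/d(x,y)=\CC(x,y)$ from the previous paragraph, this gives
\[
\CC(x,y)=\frac{\Kxy}{d(x,y)}=\frac{\kakko(\LL^0 f_\la,\dex-\dey)}{d(x,y)},
\]
which is both the asserted formula and the statement that $f_\la$ attains the infimum, i.e. is a minimizer of $\CC(x,y)$.

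I expect essentially no real obstacle here beyond bookkeeping about the phrase "sufficiently small $\la$": one must check that the threshold below which $\kakko(\LL^0 f_\la,\dex-\dey)$ stabilizes to $\Kxy$ can be taken compatible with the thresholds needed for the limit identities in \autoref{main theorem} and for the positivity/attainment statements of \autoref{distance}, but since only finitely many such thresholds occur (and $V$ is finite) one just takes their minimum. I would also add a brief remark that the argument does not presuppose that the infimum in $\CC(x,y)$ is attained — the corollary itself exhibits a minimizer — so no separate compactness argument for $\CC(x,y)$ is required.
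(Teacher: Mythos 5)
Your proposal is correct and matches the paper's own (implicit) argument: the corollary is read off from the sandwich $\Kxy/d(x,y)\le\kxy\le\CC(x,y)$ established in the proof of \autoref{main theorem}, closed into an equality by observing that $f_\la\in\Lipxy$ is an admissible competitor for the infimum defining $\CC(x,y)$. The extra remarks on uniformizing the ``sufficiently small $\la$'' thresholds and on not needing attainment of the infimum are harmless bookkeeping consistent with the paper.
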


\section{Calculations of $\CC(x,y)$ of some hypergraphs} \label{calculation section 4}

In this section, we calculate $\CC(x,y)$ for several types of hypergraphs.
Recall that as discussed in \autoref{main theorem}, if \autoref{liminf conj} is true, $\CC(x,y)$ coincides with $\kxy$. 

We first review the LLY curvature and the IKTU curvature of some concrete examples.
We adopt the following notation.

\begin{nota}
Let $\K_0\in\R$.
For a hypergraph $H$, we denote $\K(H)=\K_0$ if $\kxy=\K_0$ holds for any two distinct vertices $x,y$.
The same convention applies to $\kLLY$, $\kIKTU$ and $\CC$.
\end{nota}

First recall the LLY curvature of two simple graphs.

\begin{exa}[{\cite[Examples 1 and 2]{LLY}}] \label{example of graph}
For the unweighted complete graph $K_n$ and the unweighted cycle graph $C_n$ with $n$ vertices, we have
\begin{equation*}
\dis \kLLY(K_n)=\frac{n}{n-1}, \quad
\kLLY(C_n)=
\left\{
\begin{aligned}
&\dis\;3-\frac{n}{2} & \;(n=2,3,4,5,6) , \vspace{1mm} \\
&\; 0 & (n\ge6). \vspace{1mm} 
\end{aligned}
\right. 
\end{equation*}
Moreover, when $G$ is  an unweighted graph, $\kLLY(G)>1$ holds if and only if $G$ is a complete graph.
\end{exa}

As seen in \autoref{example of graph}, $\lim_{n\to\infty}\kLLY(K_n)=1$ and $\lim_{n\to\infty}\kLLY(C_n)=0$ hold.
Thus, it is also expected that the curvature of a characteristic hypergraph converges as the number of vertices diverges.
We will calculate $\CC(x,y)$ of \emph{$1$-regular hypergraphs} (while it seems difficult to calculate that of \emph{cycle hypergraphs}).

\begin{defi}
A hypergraph $H$ is \emph{$1$-regular} if every vertex has only one hyperedge including it.
Notice that $1$-regular hypergraphs are unique up to the difference of weights since our graphs are connected in this paper.
We denote a $1$-regular hypergraph with $n$ vertices by $R_{n,1}$.
\end{defi}

\begin{exa}[{\cite[Examples 6.1 and 6.4]{IKTU}}] \label{IKTU curv example}
For the unweighted $1$-regular hypergraph $R_{3,1}$ and the unweighted complete hypergraph $K\hspace{-0.8mm}H_3$ with $3$ vertices, we have $\kIKTU(R_{3,1})=\kIKTU(K\hspace{-0.8mm}H_3)=3/2$.
\end{exa}

For the limit of unweighted complete hypergraphs, we know the following.

\begin{fact}[{\cite[Example 6.5]{IKTU}}]
For an unweighted complete hypergraph $K\hspace{-0.8mm}H_n$ with $n$ vertices and its vertices $x,y$, $\CC(x,y)\le n/(n-1)$ holds.
Therefore, we find $\lim_{n\to\infty}\kIKTU(K\hspace{-0.8mm}H_n)\le1$.
\end{fact}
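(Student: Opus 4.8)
The plan is to bound $\CC(x,y)$ from above by testing the infimum in its definition against a single well-chosen function, and then to invoke $\kIKTUxy\le\CC(x,y)$ (\autoref{K < C}) and let $n\to\infty$. Since $K\hspace{-0.8mm}H_n$ is vertex-transitive, the weighted degree $d_v$ is the same at every vertex --- write $d_v\equiv D$ --- all vertices are pairwise adjacent so $d(x,y)=1$ for all distinct $x,y$, and both $\kIKTUxy$ and $\CC(x,y)$ are independent of the chosen pair, so the symbols $\kIKTU(K\hspace{-0.8mm}H_n)$ and $\CC(K\hspace{-0.8mm}H_n)$ are well defined. As $d(x,y)=1$, it then suffices to exhibit one $f\in\Lipxy$ with $\kakko(\LL^0f,\dex-\dey)\le n/(n-1)$.

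First I would fix distinct vertices $x,y$ and take the test function $f$ given by $f(y)=0$ and $f(v)=D$ for every $v\neq y$. This $f$ lies in $\Lipxy$: for all $u,v$ one has $\kakko(f,\delta_u-\delta_v)\in\{0,\pm1\}$, which is $\le d(u,v)$, so $f$ is weighted $1$-Lipschitz, and $\kakko(f,\dex-\dey)=D/D=1=d(x,y)$.

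The crux is the computation of $\LL^0f$. Every hyperedge contained in $V\setminus\{y\}$ carries a constant value of $f$, hence contributes $0$ to $\LL f$; every hyperedge $e\ni y$ has $f=0$ at $y$ and $f=D$ on $e\setminus\{y\}$, so $\max_{\mathsf{b}\in\mathsf{B}_e}\kakko(f,\mathsf{b})=1$, attained precisely on the face $F_e\:=\conv(\{\delta_u-\dey\mid u\in e\setminus\{y\}\})$ of $\mathsf{B}_e$. Thus $\LL f$ equals the Minkowski sum $\sum_{e\ni y}F_e$, a positive-dimensional polytope, and the real work is to identify its element $\LL^0f$ of least $\|\cdot\|_{\Di}$-norm. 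I would do this by symmetry: both $\LL f$ and $\|\cdot\|_{\Di}$ are invariant under the natural action of $\mathrm{Sym}(V\setminus\{y\})$ on $\rv$ (this group fixes $f$ and permutes the faces $F_e$ among themselves), and the norm is strictly convex, so the minimizer $\LL^0f$ is unique and therefore fixed by $\mathrm{Sym}(V\setminus\{y\})$, i.e.\ constant on $V\setminus\{y\}$. Two further remarks then pin it down: every point of each $F_e$ has zero coordinate-sum, hence so does $\LL^0f$; and the $\dey$-coordinate of $\LL^0f$ equals $-\#\{e\in E\mid y\in e\}=-d_y=-D$. Together with constancy on $V\setminus\{y\}$, these force $\LL^0f=\tfrac{D}{n-1}\sum_{v\neq y}\delta_v-D\,\dey$, whence $\kakko(\LL^0f,\dex)=\tfrac{1}{n-1}$ and $\kakko(\LL^0f,\dey)=-1$, so that $\kakko(\LL^0f,\dex-\dey)=\tfrac{n}{n-1}$. (This argument only uses that $K\hspace{-0.8mm}H_n$ is vertex-transitive with all vertices pairwise adjacent, so it is insensitive to the precise convention for the hyperedge set of a ``complete hypergraph''.)

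Putting the pieces together, $\CC(x,y)\le n/(n-1)$, hence $\kIKTU(K\hspace{-0.8mm}H_n)=\kIKTUxy\le\CC(x,y)\le n/(n-1)$, and letting $n\to\infty$ gives $\lim_{n\to\infty}\kIKTU(K\hspace{-0.8mm}H_n)\le1$. The only delicate point is the identification of $\LL^0f$ above: if one carelessly replaced $\flow(e)$ on the maximizing face by one of its vertices $\delta_u-\dey$ rather than its barycentre, one would obtain only a strictly weaker bound, so it is essential to exploit the symmetry of the configuration --- together with strict convexity of $\|\cdot\|_{\Di}$, which guarantees that the symmetric representative really is the minimal-norm one.
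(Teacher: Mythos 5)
Your argument is correct, but note that the paper itself offers no proof of this statement: it is imported verbatim as a Fact from \cite[Example 6.5]{IKTU}, so there is no in-paper argument to compare against. What you supply is a valid self-contained derivation, and it is consistent in spirit with how the paper computes $\CC(x,y)$ elsewhere (test a single $f\in\Lipxy$, identify the minimal-norm selection $\LL^0f$, then invoke \autoref{K < C}). The substantive points all check out: your $f$ is indeed in $\Lipxy$ with $d(x,y)=1$; hyperedges on which $f$ is constant contribute $0$ to $\LL f$ because the factor $\kakko(f,\flow(e))$ vanishes; the argmax over $\mathsf{B}_e$ for $e\ni y$ is exactly the face $F_e$ by the same computation as in \autoref{grad lemma}; and the identification of $\LL^0f$ via uniqueness of the minimal-norm element (\autoref{closed convex} plus strict convexity of $\|\cdot\|_{\Di}$) and invariance under permutations fixing $y$ is sound --- you correctly note that taking a vertex of $F_e$ instead of its barycentre would only give the weaker bound $2$. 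Two small caveats. First, your parenthetical claim that the argument ``only uses vertex-transitivity with all vertices pairwise adjacent'' overstates the generality: the symmetry step needs invariance under all of $\mathrm{Sym}(V\setminus\{y\})$, not mere vertex-transitivity, and the identity $\LL^0f(y)=-d_y$ needs every hyperedge through $y$ to meet $V\setminus\{y\}$ (no loops); both hold for any reasonable complete hypergraph, so this is harmless here. Second, the final sentence tacitly assumes $\lim_{n\to\infty}\kIKTU(K\hspace{-0.8mm}H_n)$ exists (or should be read as a $\limsup$), but the paper's statement has the same feature, so this is not a defect of your proof.
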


On the other hand, we show that the following holds for the limit of $1$-regular hypergraphs.

\begin{prop}[see \autoref{1-regular subsection}] \label{1-regular limit}
\hspace{0mm}
\begin{itemize}
\item[$(1)$] $\dis\lim_{n\to\infty}\kIKTU(R_{n,1})\le\lim_{n\to\infty}\K(R_{n,1})\le0$.
\item[$(2)$] If \autoref{liminf conj} is true, then $\dis\lim_{n\to\infty}\K(R_{n,1})=0$.
\end{itemize}
\end{prop}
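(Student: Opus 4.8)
The plan is to compute $\CC(x,y)$ (or at least good upper and lower bounds) directly on $R_{n,1}$, using the structural simplicity of a $1$-regular hypergraph: there is a single hyperedge $e_V = V$ with some weight $w>0$, every vertex $v$ has $E_v = \{e_V\}$, so $d_v = w$ for all $v$, and $d(x,y)=1$ for all distinct $x,y$. In particular $\kakko(f,g) = w^{-1}\langle f,g\rangle$, and the base polytope $\mathsf{B}_{e_V}$ is $\conv(\{\dex-\dey \mid x,y\in V\})$. For part (1) I would first observe $\kIKTU(R_{n,1}) \le \K(R_{n,1}) \le \CC(R_{n,1})$ by \eqref{kIKTU < k} and \autoref{K < C}, so it suffices to show $\CC(x,y)\le 0$ as $n\to\infty$, i.e. to exhibit for each $n$ a test function $f\in\Lipxy$ (for a fixed pair $x,y$) with $\kakko(\LL^0 f,\dex-\dey)$ tending to $0$ from below, or more precisely with $\limsup_n \inf_f (\cdot) \le 0$. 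The natural candidate is the function that is $1/2$ at $x$, $-1/2$ at $y$, and $0$ elsewhere (after the normalization in \autoref{K < C} one may instead take $f(x)=d(x,y)=1$ convenient values; but the symmetric choice is cleanest for reading off $\LL^0 f$). One checks this $f$ lies in $\Lipxy$: the weighted Lipschitz condition $\kakko(f,\delta_u-\delta_v)\le d(u,v)$ reads $w^{-1}|f(u)-f(v)| \le 1$ for adjacent $u,v$ (all pairs are adjacent), which holds once $w\ge 1$; for the unweighted case $w=1$ this is exactly $|f(u)-f(v)|\le 1$, valid since the values lie in $\{-1/2,0,1/2\}$, and $\kakko(f,\dex-\dey) = w^{-1}\cdot 1 = 1 = d(x,y)$.

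The key computation is then $\LL^0 f$ for this $f$. By \autoref{canonical}, $\LL^0 f$ is the minimal-norm element of $\LL f = \{ w \kakko(f,\mathsf{b})\,\mathsf{b} \mid \mathsf{b}\in\argmax_{\mathsf{b}'\in\mathsf{B}_{e_V}}\kakko(f,\mathsf{b}')\}$. Here $\max_{\mathsf{b}'} \kakko(f,\mathsf{b}')$ is attained on the face of $\mathsf{B}_{e_V}$ spanned by those $\dex - \delta_v$ with $v$ a minimizer of $f$ (and symmetrically $\delta_u - \dey$ with $u$ a maximizer); the maximizers of $f$ are $\{x\}$, the minimizers $\{y\}$, so in fact the maximizing $\mathsf{b}$ is the single vertex $\dex - \dey$, and $\kakko(f,\dex-\dey) = 1$, giving $\LL f = \{w(\dex-\dey)\}$, hence $\LL^0 f = w(\dex - \dey)$. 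Therefore
\begin{equation*}
\kakko(\LL^0 f, \dex - \dey) = w\cdot w^{-1}\langle \dex-\dey,\dex-\dey\rangle = 2,
\end{equation*}
which is \emph{not} small — so this naive test function gives $\CC \le 2$, not $\le 0$. The real content, and the main obstacle, is to find the right competitor: one must spread the "descent" of $f$ across many vertices so that the argmax face of $\mathsf{B}_{e_V}$ becomes high-dimensional and the minimal-norm selection $\LL^0 f$ becomes small. Concretely I expect the optimal $f$ on $R_{n,1}$ to take roughly $n/2$ vertices at one level and $n/2$ at another (a "balanced two-level" function in the spirit of \autoref{intro thm}, which applies when $E=\{e_V,e\}$ — here the degenerate case with the second hyperedge absent), so that the averaging inside the base polytope kills the $\langle\cdot,\delta_x-\delta_y\rangle$ pairing at rate $O(1/n)$; then $\kakko(\LL^0 f,\dex-\dey) \to 0$ and in fact one should arrange the sign to get a nonpositive limit, or alternatively combine the $O(1/n)$ upper bound with the $\kIKTU(R_{3,1})=3/2$ type data and monotonicity to pin down the limit. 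This is where I would invoke the detailed analysis of \autoref{calculation section 4} (the $|E|=2$ machinery specialized to the $1$-regular case) rather than reprove it from scratch.

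For part (2), assume \autoref{liminf conj}. Then \autoref{main theorem} gives $\K(R_{n,1}) = \CC(R_{n,1})$ exactly, so it remains to show $\lim_n \CC(R_{n,1}) = 0$. The upper bound $\limsup_n \CC(R_{n,1})\le 0$ comes from part (1). For the matching lower bound $\liminf_n \CC(R_{n,1}) \ge 0$, I would show $\CC(x,y)\ge 0$ for \emph{every} $1$-regular hypergraph, which should follow from the general structure: for any $f\in\Lipxy$ the quantity $\kakko(\LL^0 f,\dex-\dey)$ is, up to the normalization, $w\kakko(f,\mathsf{b})\langle \mathsf{b},\dex-\dey\rangle$ for a maximizing $\mathsf{b}\in\mathsf{B}_{e_V}$; using $\kakko(f,\mathsf{b})\ge \kakko(f,\dex-\dey) = 1 > 0$ and a convexity argument showing $\langle \mathsf{b},\dex-\dey\rangle \ge 0$ at the minimal-norm maximizer (because $\dex-\dey$ itself lies in the argmax face, so its projection-type inner product against the minimal-norm selection is nonnegative) one gets $\kakko(\LL^0 f,\dex-\dey)\ge 0$, hence $\CC\ge 0$. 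Combining, $\lim_n \CC(R_{n,1}) = 0$, and by \autoref{main theorem} the same holds for $\K(R_{n,1})$. The delicate point here is justifying $\langle \mathsf{b},\dex-\dey\rangle\ge 0$ for the canonical (minimal-norm) selection; I expect this to follow from the characterization of $\LL^0 f$ as a metric projection onto the closed convex set $\LL f$ (\autoref{closed convex}) together with the fact that $w(\dex-\dey)$ lies in the relevant cone, but pinning it down cleanly is the part that needs care.
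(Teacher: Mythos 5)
Your overall route is the paper's: reduce to the chain $\kIKTU(R_{n,1})\le\K(R_{n,1})\le\CC(R_{n,1})$ (note the second inequality is \eqref{C>Kxy}, not \autoref{K < C}, which only gives $\kIKTU\le\CC$) and then show $\CC(R_{n,1})\to0$ using balanced two-level test functions. The paper does this by computing the exact value $\CC(R_{n,1})=n\big/\big(\upgauss(n/2)\ungauss(n/2)\big)$ in \autoref{1-regular ex}, via \autoref{grad lemma} and the observation that \autoref{flow of C} applies when $e=e_V$; both parts of the proposition then follow immediately. Two points of divergence. First, your suggestion to ``arrange the sign to get a nonpositive limit'' cannot work: as your own part-(2) argument shows, $\kakko(\LL^0f,\dex-\dey)$ is strictly positive for every $f\in\Lipxy$ on $R_{n,1}$, so $\CC(R_{n,1})>0$ for all $n$ and only the $O(1/n)$ upper bound is available --- which suffices, since a positive sequence tending to $0$ has limit $\le0$. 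Second, the quantitative heart of part (1) --- that the balanced function makes the minimal-norm selection on the argmax face the uniform average, yielding $1/\upgauss(n/2)+1/\ungauss(n/2)$ --- is deferred by you to the Section 4 machinery rather than carried out; since the proposition is placed after \autoref{1-regular ex} and cites it, this deferral is legitimate, but be aware that this is where essentially all the content of the proposition lives.

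For part (2), your lower bound $\CC(x,y)\ge0$ is a genuinely different and correct sub-argument: since $f\in\Lipxy$ forces $x$ to maximize and $y$ to minimize $\kakko(f,\delta_\cdot)$, the vector $\dex-\dey$ lies in the argmax face $F$, and the variational inequality for the projection of $0$ onto the closed convex set $F$ (via \autoref{closed convex}) gives $\kakko(b_0,\dex-\dey)\ge\|b_0\|_{\Di}^2\ge0$ for the minimal-norm $b_0\in F$, hence $\kakko(\LL^0f,\dex-\dey)\ge0$. This soft argument avoids identifying the minimizer and would generalize beyond the $1$-regular case; the paper instead reads positivity off the exact formula \eqref{1-regular kxy}, which it has already established. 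Both are valid, and combined with \autoref{main theorem} either yields part (2).
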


For an unweighted hypergraph $H$ and its vertices $x,y$, it seems reasonable to expect that heat can move from $x$ to $y$ most easily when $H$ is complete, while it becomes harder as the number of vertices increases when $H$ is $1$-regular, i.e. $\lim_{n\to\infty}\kIKTU(K\hspace{-0.8mm}H_n)>\lim_{n\to\infty}\kIKTU(R_{n,1})$.
Moreover, when $H$ is $1$-regular, it is unlikely that $\kIKTU(R_{n,1})<0$ even if the number of vertices is very large, and thus we expect $\lim_{n\to\infty}\kIKTU(R_{n,1})=0$ to hold.

In the rest of this paper, we focus on $\CC(x,y)$ of the following hypergraphs:
\begin{itemize}
\item $1$-regular hypergraphs (\autoref{1-regular subsection}),
\item Hypergraphs satisfying $e_V\in E$, where $e_V$ is a hyperedge including all vertices, with $2$ hyperedges (\autoref{|E|=2 subsection}).
\end{itemize}

\subsection{Preparations for calculation} \label{tejun}

We often use the following properties in our calculations.

\begin{lem} \label{grad lemma}
Take $e\in E$ and $f\in\Lip$ satisfying $\max_{\mathsf{b}\in\mathsf{B}_e}\kakko(f,\mathsf{b})>0$.
For $\mathsf{b}\in\mathsf{B}_e$, we define $\mathsf{b}^+,\mathsf{b}^-\subset V$ as
\begin{equation*}
\mathsf{b}^+
\:=
\left\{v\in V\;\middle|\;
\mathsf{b}(v)>0\right\},
\quad
\mathsf{b}^-
\:=
\left\{v\in V\;\middle|\;
\mathsf{b}(v)<0\right\},
\end{equation*}
respectively.
Then, for any $\flow(e)\in\mathrm{arg\,max}_{\mathsf{b}\in\mathsf{B}_e}\kakko(f,\mathsf{b})$, we have
\begin{equation}
\sum_{v\in\posiflow(e)}\flow(e)(v)=1
,\quad
\sum_{v\in\negaflow(e)}\flow(e)(v)=-1. \label{flow formula}
\end{equation}
In particular, if $\max_{\mathsf{b}\in\mathsf{B}_e}\kakko(f,\mathsf{b})=1$, then we have
\begin{equation}
\sum_{v\in\posiflow(e)}w_e\kakko(f,{\flow(e)})\flow(e)(v)=w_e
,\quad
\sum_{v\in\negaflow(e)}w_e\kakko(f,{\flow(e)})\flow(e)(v)=-w_e. \label{L0 formula}
\end{equation}
\end{lem}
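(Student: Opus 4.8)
The plan is to work directly from the description of the base polytope $\mathsf{B}_e = \conv(\{\dex - \dey \mid x,y\in e\})$ and the definition of $\flow(e)$ as a maximizer of $\kakko(f,\cdot)$ over $\mathsf{B}_e$. First I would record that any $\mathsf{b}\in\mathsf{B}_e$ is a convex combination $\mathsf{b} = \sum_{x,y\in e}\alpha_{xy}(\dex-\dey)$ with $\alpha_{xy}\ge 0$ and $\sum\alpha_{xy}=1$; consequently $\supp\mathsf{b}\subset e$ and $\sum_{v\in V}\mathsf{b}(v)=0$, so the total positive mass of $\mathsf{b}$ equals the total negative mass, i.e. $\sum_{v\in\mathsf{b}^+}\mathsf{b}(v) = -\sum_{v\in\mathsf{b}^-}\mathsf{b}(v) =: s(\mathsf{b})$, and $s(\mathsf{b})\le 1$ since the total variation of a convex combination of the $\pm1$-valued vectors $\dex-\dey$ is at most $1$. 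The content of \eqref{flow formula} is that for a \emph{maximizer} $\flow(e)$ we actually have $s(\flow(e))=1$.

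The key step is to rule out $s(\flow(e))<1$. Since by hypothesis $\max_{\mathsf{b}\in\mathsf{B}_e}\kakko(f,\mathsf{b})>0$, we have $\kakko(f,\flow(e))>0$. Pick $x^\ast\in\flow(e)^+$ with $f(x^\ast)/d_{x^\ast}$ maximal among $v\in e$ and $y^\ast\in\flow(e)^-$ with $f(y^\ast)/d_{y^\ast}$ minimal among $v\in e$ (these sets are nonempty because $\kakko(f,\flow(e))>0$ forces $\flow(e)\neq 0$); note $f(x^\ast)/d_{x^\ast}\ge f(v)/d_v$ for all $v\in\supp\flow(e)$ appearing with positive sign, and similarly for $y^\ast$, simply because $\flow(e)$ is an arg-max — if some positively-weighted vertex $v$ had a strictly smaller $f$-to-degree ratio we could shift mass from $v$ toward $x^\ast$ and strictly increase $\kakko(f,\flow(e))$. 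Then, if $s(\flow(e))=:s<1$, the vector $\mathsf{b}' := \flow(e) + (1-s)(\delta_{x^\ast}-\delta_{y^\ast})$ lies in $\mathsf{B}_e$ (it is still a convex combination: we are adding weight $1-s$ to the single term $\delta_{x^\ast}-\delta_{y^\ast}$ and renormalizing is unnecessary since the coefficients already summed to $s<1$ — more carefully, write $\flow(e)$ as a convex combination with total coefficient $1$, of which a fraction $s$ goes to genuinely nonzero differences and $1-s$ to "degenerate" terms $\delta_v-\delta_v=0$; replace those degenerate terms by $\delta_{x^\ast}-\delta_{y^\ast}$). A direct computation gives $\kakko(f,\mathsf{b}') - \kakko(f,\flow(e)) = (1-s)\big(f(x^\ast)/d_{x^\ast} - f(y^\ast)/d_{y^\ast}\big) \ge 0$, and in fact the bracket is $>0$ because $\kakko(f,\flow(e)) = \sum_v \flow(e)(v) f(v)/d_v > 0$ together with the ratio extremality forces $f(x^\ast)/d_{x^\ast}>0>$ is not quite immediate, so instead I would argue: if the bracket were $0$ then every positively-weighted $v$ has the same ratio as $x^\ast$ and every negatively-weighted $v$ the same ratio as $y^\ast$, and these two common ratios coincide (both equal the bracket-halves summing to zero), whence $\kakko(f,\flow(e)) = s\cdot(\text{common ratio}) - s\cdot(\text{common ratio}) = 0$, contradicting positivity. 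Hence the bracket is strictly positive, $\kakko(f,\mathsf{b}')>\kakko(f,\flow(e))$, contradicting maximality. Therefore $s(\flow(e))=1$, which is exactly \eqref{flow formula}.

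Finally, \eqref{L0 formula} is immediate: when $\max_{\mathsf{b}\in\mathsf{B}_e}\kakko(f,\mathsf{b})=1$ we have $\kakko(f,\flow(e))=1$, so the summand $w_e\kakko(f,\flow(e))\flow(e)(v) = w_e\,\flow(e)(v)$, and summing over $v\in\posiflow(e)$ resp.\ $v\in\negaflow(e)$ and applying \eqref{flow formula} yields $w_e$ resp.\ $-w_e$. I expect the main obstacle to be the bookkeeping in the mass-shifting argument — making precise that shifting the deficit $1-s$ onto the single pair $(\delta_{x^\ast}-\delta_{y^\ast})$ stays inside $\mathsf{B}_e$, and handling the degenerate case where $\flow(e)^+$ or $\flow(e)^-$ might be forced to be small — but once the extremality of the ratios at the chosen $x^\ast,y^\ast$ is established this is routine.
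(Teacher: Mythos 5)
Your proof is correct and rests on the same underlying mechanism as the paper's --- maximality of $\flow(e)$ forces its positive part to sit on $\argmax_{v\in e}\kakko(f,\dev)$ and its negative part on $\argmin_{v\in e}\kakko(f,\dev)$ --- but the packaging is genuinely different. The paper writes $\flow(e)=\sum_i a_i(\delta_{p_i}-\delta_{q_i})$ with $a_i>0$, $\sum_i a_i=1$, bounds $\kakko(f,\flow(e))$ above by $\big(\max_{v\in e}\kakko(f,\dev)-\min_{v\in e}\kakko(f,\dev)\big)\sum_i a_i$, and reads off \eqref{flow formula} from the equality case: since $\max>\min$ by the hypothesis $\max_{\mathsf{b}\in\mathsf{B}_e}\kakko(f,\mathsf{b})>0$, every $p_i$ is an argmax and every $q_i$ an argmin, these two sets are disjoint, no cancellation occurs, and the positive mass is $\sum_i a_i=1$. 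You instead argue by contradiction via an exchange: if the positive mass is $s<1$, augment by $(1-s)(\delta_{x^\ast}-\delta_{y^\ast})$ to beat the maximizer. This works, and it has the merit of making explicit the characterization $\mathsf{B}_e=\big\{\mathsf{b}\;\big|\;\supp\mathsf{b}\subset e,\ \sum_{v}\mathsf{b}(v)=0,\ \sum_{v\in\mathsf{b}^+}\mathsf{b}(v)\le1\big\}$, which the paper leaves implicit. Two spots need tightening. First, your claim that in a convex-combination representation ``a fraction $s$ goes to genuinely nonzero differences and $1-s$ to degenerate terms'' is false for an \emph{arbitrary} representation (e.g.\ $\tfrac12(\delta_a-\delta_b)+\tfrac12(\delta_b-\delta_c)$ has all coefficients on nondegenerate terms but positive mass $\tfrac12$); you should either invoke a source--sink flow decomposition to produce a representation of that special form, or, more cleanly, verify $\mathsf{b}'\in\mathsf{B}_e$ directly from the characterization above, since the positive mass of $\mathsf{b}'$ is exactly $s+(1-s)=1$. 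Second, the positivity of the bracket follows in one line from $0<\kakko(f,\flow(e))\le s\big(\kakko(f,\delta_{x^\ast})-\kakko(f,\delta_{y^\ast})\big)$, which replaces your case analysis. The deduction of \eqref{L0 formula} from \eqref{flow formula} is immediate in both treatments.
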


\begin{proof}
We represent $\flow(e)\in\mathrm{arg\,max}_{\mathsf{b}\in\mathsf{B}_e}\kakko(f,\mathsf{b})$ by 
$\flow(e)=\sum_i a_i(\delta_{p_i}-\delta_{q_i})$ where $p_i,q_i\in V$, $a_i>0$ for any $i$ and $\sum_i a_i=1$.
Then, we have
\begin{equation*}
\kakko(f,{\flow(e)})
=
\sum_i a_i\Big(\kakko(f,\delta_{p_i})-\kakko(f,\delta_{q_i})\Big)
\le
\bigg(\max_{v\in e}\kakko(f,\dev)-\min_{v\in e}\kakko(f,\dev)\bigg)\sum_i a_i
=
\max_{\mathsf{b}\in\mathsf{B}_e}\kakko(f,\mathsf{b}).
\end{equation*}
Therefore, by $\flow(e)\in\mathrm{arg\,max}_{\mathsf{b}\in\mathsf{B}_e}\kakko(f,\mathsf{b})$, we obtain $p_i\in\mathrm{arg\,max}_{v\in e}\kakko(f,\dev)$ and $q_i\in\mathrm{arg\,min}_{v\in e}\kakko(f,\dev)$.
Hence, \eqref{flow formula} holds.
\end{proof}

\begin{thm} \label{key property}
Let $E=\{e_V,e\}$, where $e_V$ is a hyperedge including all vertices.
In addition, let $x,y,z\in V$ and $f\in\Lipxy$.
\begin{itemize}
\item[$(1)$] If $\kakko(f,\dez)=\kakko(f,\dex)$ and $E_z=E_x$, then $\kakko(\LL^0f,\dez)=\kakko(\LL^0f,\dex)$.
\item[$(2)$] If $\kakko(f,\dez)=\kakko(f,\dey)$ and $E_z=E_y$, then $\kakko(\LL^0f,\dez)=\kakko(\LL^0f,\dey)$.
\item[$(3)$] If $\kakko(f,\dez)=\kakko(f,\dex)$ and $f$ is a minimizer of $\CC(x,y)$, then $\kakko(\LL^0f,\dez)=\kakko(\LL^0f,\dex)$.
\item[$(4)$] If $\kakko(f,\dez)=\kakko(f,\dey)$ and $f$ is a minimizer of $\CC(x,y)$, then $\kakko(\LL^0f,\dez)=\kakko(\LL^0f,\dey)$.
\end{itemize}
\end{thm}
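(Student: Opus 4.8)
The plan is to analyze the structure of $\LL^0 f$ for $f\in\Lipxy$ in the special case $E=\{e_V,e\}$ and exploit the fact that $\LL^0 f$ is the minimal-norm element of $\LL f$. Recall from \autoref{LL} that any element of $\LL f$ has the form $\sum_{e'\in E}w_{e'}\kakko(f,\flow(e'))\flow(e')$ with $\flow(e')\in\argmax_{\mathsf{b}\in\mathsf{B}_{e'}}\kakko(f,\mathsf{b})$. Since there are only two hyperedges, we write $\LL^0 f = w_{e_V}\kakko(f,\mathsf{b}_{e_V})\mathsf{b}_{e_V} + w_e\kakko(f,\mathsf{b}_e)\mathsf{b}_e$ for suitable maximizing flows, where the choice of flows is made to minimize the $D^{-1}$-norm. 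The key point is that the contribution of each hyperedge $e'$ to $\LL^0 f$ at a vertex $v$ is supported (after choosing flows optimally) only on vertices achieving $\max_{u\in e'}\kakko(f,\delta_u)$ or $\min_{u\in e'}\kakko(f,\delta_u)$, by \autoref{grad lemma}: for vertices $v\in e'$ that are neither maximizers nor minimizers of $\kakko(f,\cdot)$ on $e'$, we have freedom to make $\mathsf{b}_{e'}(v)=0$.

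For parts $(1)$ and $(2)$, which assume $E_z=E_x$ (resp. $E_z=E_y$), the argument should be a symmetry/exchange argument. If $\kakko(f,\delta_z)=\kakko(f,\delta_x)$ and $z,x$ belong to exactly the same hyperedges, then for each such hyperedge $e'$, the vertices $z$ and $x$ play interchangeable roles in the variational problem defining the maximizing flows; swapping the roles of $z$ and $x$ in any optimal flow configuration yields another optimal flow configuration with the same norm. Since $\LL^0 f$ is the \emph{unique} minimal-norm element (by \autoref{closed convex} and \autoref{canonical}), it must be invariant under this swap, which forces $\kakko(\LL^0 f,\delta_z)=\kakko(\LL^0 f,\delta_x)$. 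I would make this precise by defining the transposition $\tau$ on $\rv$ exchanging coordinates $z$ and $x$, checking $\tau$ preserves $\mathsf{B}_{e'}$ for each $e'\in E$ (using $E_z=E_x$), hence $\tau(\LL f)=\LL(\tau f)=\LL f$ since $\tau f = f$ on the relevant coordinates — more carefully, $\tau$ is a $D^{-1}$-isometry when $d_z=d_x$, which follows from $E_z=E_x$ — and then uniqueness of the minimal-norm element gives $\tau(\LL^0 f)=\LL^0 f$.

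For parts $(3)$ and $(4)$, we drop the hypothesis $E_z=E_x$ but add that $f$ minimizes $\CC(x,y)$. Here $z$ might lie only in $e_V$ (not in $e$), so the swap argument fails directly. Instead I would argue by contradiction: suppose $\kakko(\LL^0 f,\delta_z)\neq\kakko(\LL^0 f,\delta_x)$ while $\kakko(f,\delta_z)=\kakko(f,\delta_x)$. The idea is to perturb $f$ near $z$ to decrease $\kakko(\LL^0 f,\dex-\dey)$ while staying in $\Lipxy$, contradicting minimality. Since $z\neq x,y$ and $\kakko(f,\delta_z)=\kakko(f,\delta_x)$, one should be able to move $f(z)$ slightly (in the direction that brings $\kakko(f,\delta_z)$ strictly between the max and min values on any hyperedge containing it, or makes $z$ cease to be an active maximizer/minimizer) without violating the weighted $1$-Lipschitz constraint and without changing $\kakko(f,\dex-\dey)=d(x,y)$; the resulting change in $\LL^0 f$ along $\dex-\dey$ is controlled by how $z$ participates in the flows, and the assumed asymmetry $\kakko(\LL^0 f,\delta_z)\neq\kakko(\LL^0 f,\delta_x)$ is what makes a strict decrease available. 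This is essentially a first-order optimality / complementary-slackness argument for the convex program defining $\CC(x,y)$.

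The main obstacle will be part $(3)$--$(4)$: making the perturbation argument rigorous requires understanding precisely how $\LL^0 f$ — the minimal-norm selection — varies under small changes of $f$, and the multivaluedness of $\LL$ means $\LL^0$ need not be differentiable or even continuous in a naive sense, though it is known to be the $L^1$-limit in \eqref{L0-limit} via the resolvent. I expect the cleanest route is to work with $J_\la f$ for small $\la>0$ (which is single-valued and continuous by \autoref{resolvent} and the following proposition), translate the minimality of $f$ for $\CC(x,y)$ into a statement about $\kakko(J_\la f,\dex-\dey)$ via \autoref{key prop}/\autoref{K < C}, perform the perturbation at the level of the resolvent where everything is smooth, and then pass to the limit $\la\dto0$. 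The bookkeeping of which vertices are active maximizers/minimizers on $e_V$ versus on $e$, and ensuring the perturbed function stays in $\tLipxy$, is where the case $E=\{e_V,e\}$ is genuinely used and where most of the care is needed.
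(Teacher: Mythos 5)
Your treatment of parts $(1)$ and $(2)$ is sound and essentially matches the paper's: the paper averages the flow coefficients at $x$ and $z$ inside a single element of $\LL f$ and uses strict convexity of the squared norm, while you let the transposition of the coordinates $x,z$ act on the set $\LL f$; both reduce to uniqueness of the minimal-norm selection, and your observation that $E_z=E_x$ gives $d_z=d_x$, hence $f(z)=f(x)$ and a $D^{-1}$-isometry, makes the swap argument go through.

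The genuine gap is in parts $(3)$ and $(4)$. Your proposed mechanism --- perturb $f(z)$ slightly so that $z$ drops out of the active maximizer set, and extract a strict decrease of $\kakko(\LL^0f,\dex-\dey)$ from the assumed asymmetry --- does not produce a descent direction. Removing $z$ from the top level shrinks the set of vertices sharing the outflow of $e_V$ (and possibly of $e$), which concentrates more of the fixed total outflow on the remaining maximizers, including $x$; as the explicit formulas show (e.g.\ $\kakko(\LL^0f,\dex)=1/(I+1)$ in the $1$-regular case, where $I$ counts the companions of $x$ at the top level), this \emph{increases} the objective, so varying $f(z)$ alone gives no contradiction. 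Moreover, the map $f\mapsto\kakko(\LL^0f,\dex-\dey)$ is piecewise-defined and not differentiable at the point where $z$ sits exactly at the maximum, so a first-order/complementary-slackness argument has nothing to bite on. The paper's actual competitors are not infinitesimal perturbations but a global flattening of $f$ on the entire hyperedge $e$ (setting $f$ constantly equal to $f(y)$ or to $f(z)$ there), which annihilates the flow on $e$ and enlarges the receiving set on $e_V$. Most importantly, the correct dichotomy is not ``asymmetry implies a descent direction exists'': in a substantial range of configurations (determined by explicit weight--cardinality inequalities between $\#\Px$, $\#\Rz$, $w_{e_V}$, $w_e$) the minimal-norm selection \emph{automatically} equalizes $\kakko(\LL^0f,\dez)$ and $\kakko(\LL^0f,\dex)$, and proving this requires explicitly solving the quadratic norm-minimization over the flow coefficients subject to the mass constraints of \autoref{grad lemma}; only in the complementary configurations does one need the flattened competitor, and showing it strictly beats $f$ is again a closed-form comparison. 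None of this computational core appears in your proposal, and routing the argument through $J_\la f$ would add a limit interchange without supplying the missing construction.
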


\begin{proof} 
Notice the following:
\begin{itemize}
\item
Since $e_V$ includes all vertices and $f\in\Lipxy$, we have
\begin{equation}
\kakko(f,\dex)=\max_{v\in V}\kakko(f,\dev)
\quad\text{ and }\quad
\kakko(f,\dey)=\min_{v\in V}\kakko(f,\dev). \label{max:x min:y}
\end{equation}
\item
By the definition of $\LL$, for any $f^\prime\in\LL f$, there exists some $\mathsf{b}_1\in\mathrm{arg\,max}_{\mathsf{b}\in\mathsf{B}_{e_V}}\kakko(f,\mathsf{b})$ and $\mathsf{b}_2\in\mathrm{arg\,max}_{\mathsf{b}\in\mathsf{B}_e}\kakko(f,\mathsf{b})$ such that $f^\prime = w_{e_V}\mathsf{b}_1+w_e\kakko(f,\mathsf{b}_2)\mathsf{b}_2$.
\end{itemize}
In the following discussion, we denote $\LL^0f=w_{e_V}\mathsf{b}_{e_V}(f)+w_e\kakko(f,{\mathsf{b}_e(f)})\mathsf{b}_e(f)$, where $\mathsf{b}_{e_V}(f)\in\mathrm{arg\,max}_{\mathsf{b}\in\mathsf{B}_{e_V}}\kakko(f,\mathsf{b})$ and $\mathsf{b}_e(f)\in\mathrm{arg\,max}_{\mathsf{b}\in\mathsf{B}_e}\kakko(f,\mathsf{b})$.
\begin{itemize}
\item[$(1)$]
Assuming $\kakko(\LL^0f,\dez)\neq\kakko(\LL^0f,\dex)$, we derive a contradiction to the definition of $\LL^0$ by showing that there exists $\ttilde{\,f\,}\in\LL f$ such that 
\begin{equation}
\big\|\ttilde{\,f\,}\big\|_{\Di}
<
\|\LL^0f\|_{\Di}. \label{mokuhyou}
\end{equation}
We define $\ttilde{\mathsf{b}_{e_V}(f)}\in\mathrm{arg\,max}_{\mathsf{b}\in\mathsf{B}_{e_V}}\kakko(f,\mathsf{b})$ 
and $\ttilde{\mathsf{b}_e(f)}\in\mathrm{arg\,max}_{\mathsf{b}\in\mathsf{B}_e}\kakko(f,\mathsf{b})$ by
\begin{align*}
\dis \ttilde{\mathsf{b}_{e_V}(f)}(v)
&\:=
\left\{
\begin{aligned}
&\;\frac{\;\mathsf{b}_{e_V}(f)(x)+\mathsf{b}_{e_V}(f)(z)\;}{2} & (v=x,z) , \vspace{1mm} \\
&\;\mathsf{b}_{e_V}(f)(v) & (\text{otherwise}) ,
\end{aligned}
\right. \\
\dis \ttilde{\mathsf{b}_e(f)}(v)
&\:=
\left\{
\begin{aligned}
&\;\frac{\;\mathsf{b}_e(f)(x)+\mathsf{b}_e(f)(z)\;}{2} & (v=x,z) , \vspace{1mm} \\
&\;\mathsf{b}_e(f)(v) & (\text{otherwise}),
\end{aligned}
\right. 
\end{align*}
respectively.
Then, $\ttilde{\,f\,}\:=w_{e_V}\ttilde{\mathsf{b}_{e_V}(f)}+w_e\kakko(f,\mathsf{b}_e)\ttilde{\mathsf{b}_e(f)}$ satisfies $\ttilde{\,f\,}\in\mathcal{L}f$ by the hypotheses and
\begin{align*}
\dis \kakko(\ttilde{\,f\,},\dev)=
\left\{
\begin{aligned}
&\;\frac{\kakko(\LL^0f,\dex)+\kakko(\LL^0f,\dez)}{2} & (v=x,z), \vspace{1mm} \\
&\;\kakko(\LL^0f,\dev) & (\text{otherwise}).
\end{aligned}
\right. 
\end{align*}
Therefore, the inequality \eqref{mokuhyou} holds.
Indeed, $\|\LL^0f\|_{\Di}^2>\big\|\ttilde{\,f\,}\big\|_{\Di}^2$ holds since we have $\kakko(\LL^0f,\dez)\neq\kakko(\LL^0f,\dex)$ and $a^2+b^2>2\{(a+b)/2\}^2$ for any real numbers $a\neq b$.

\item[$(2)$]
This follows from the same proof as that for $(1)$.

\item[$(3)$]
Since we proved the case of $E_z=E_x$ in (1), we consider the case of $E_z\neq E_x$, in other words, either $x\in e$ and $z\not\in e$ or $x\not\in e$ and $z\in e$.

\vspace{1mm}
\textbf{\underline{The case $z\in e$ and $x\not\in e$}:} 
We set
\begin{equation*}
\A\:=\min_{v\in e}\kakko(f,\dev)
,\quad
\B\:=\kakko(f,\mathsf{b}_e)=\kakko(f,\dez)-\A\quad\big(\in[0,1]\big).
\end{equation*}
By \eqref{max:x min:y}, it is sufficient to consider the following cases:
\begin{itemize}
\item[\textbf{(A)}] $y\in e$.
\item[\textbf{(B)}] $y\not\in e$ and $\A=\kakko(f,\dez)$.
\item[\textbf{(C)}] $y\not\in e$ and $\A\in\Big(\kakko(f,\dey),\,\kakko(f,\dez)\Big)$.
\item[\textbf{(D)}] $y\not\in e$ and $\A=\kakko(f,\dey)$.
\end{itemize}
\textbf{(A)} \underline{The case $y\in e$}:
\begin{itemize}
\item[$\bullet$]
\textbf{Step 1.}
To illustrate the behavior of heat transfer, it is sufficient to consider the following sets:
\begin{align*}
\Px &\:=
\Big\{v\in V\;\Big|\;\kakko(f,\dev)=\kakko(f,\dex),\;v\not\in e\Big\}\quad(\ni x), \\
\QQ\;\; &\:=
\Big\{v\in V\;\Big|\;\kakko(f,\dev)=\kakko(f,\dey),\;v\not\in e\Big\}, \\
\Rz &\:=
\Big\{v\in V\;\Big|\;\kakko(f,\dev)=\kakko(f,\dez),\;v\in e\Big\}\quad(\ni z), \\
\Sy &\:=
\Big\{v\in V\;\Big|\;\kakko(f,\dev)=\kakko(f,\dey),\;v\in e\Big\}\quad(\ni y).
\end{align*}
By the assumption $\kakko(f,\dez)=\kakko(f,\dex)$ and $(1)$ and $(2)$ above, $\mathsf{b}_{e_V}(f)\in\mathrm{arg\,max}_{\mathsf{b}\in\mathsf{B}_{e_V}}\kakko(f,\mathsf{b})$ 
and $\mathsf{b}_e(f)\in\mathrm{arg\,max}_{\mathsf{b}\in\mathsf{B}_e}\kakko(f,\mathsf{b})$ are represented by
\begin{equation*}
\dis \mathsf{b}_{e_V}(f)(v)
\:=
\left\{
\begin{aligned}
&\;p & (v\in\Px) , \vspace{1mm} \\
&\;-q & (v\in\QQ) , \vspace{1mm} \\
&\;r & (v\in\Rz) , \vspace{1mm} \\
&\;-s & (v\in\Sy) , \vspace{1mm} \\
&\;0 & (\text{otherwise}), 
\end{aligned}
\right. 
\qquad\text{and}\qquad
\mathsf{b}_e(f)(v)
\:=
\left\{
\begin{aligned}
&\;0 & (v\in\Px) , \vspace{1mm} \\
&\;0 & (v\in\QQ) , \vspace{1mm} \\
&\;t & (v\in\Rz) , \vspace{1mm} \\
&\;-u & (v\in\Sy) , \vspace{1mm} \\
&\;0 & (\text{otherwise}) ,
\end{aligned}
\right. 
\end{equation*}
where $p,q,r,s,t,u\in[0,1]$.
Note that we have
\begin{align}
\dis
\left\{
\begin{aligned}
&\;\#\Px\cdot p+\#\Rz\cdot r=1 , \vspace{1mm} \\
&\;\#\QQ\cdot q+\#\Sy\cdot s=1 , \vspace{1mm} 
\end{aligned}
\right. 
\qquad\text{and}\qquad
\left\{
\begin{aligned}
&\;\#\Rz\cdot t=1 , \vspace{1mm} \\
&\;\#\Sy\cdot u=1 . \vspace{1mm} 
\end{aligned}
\right. 
\label{iii-1 condition}
\end{align}
\item[$\bullet$]
\textbf{Step 2.}
We calculate $p$ and $r$.
We have $\B=1$ and
\begin{equation*}
\dis \LL^0f
=
w_{e_V}\mathsf{b}_{e_V}(f)+w_e\mathsf{b}_e(f)
=
\left\{
\begin{aligned}
&\;w_{e_V}\cdot p & (v\in\Px) , \vspace{1mm} \\
&\;-w_{e_V}\cdot q & (v\in\QQ) , \vspace{1mm} \\
&\;w_{e_V}\cdot r+w_e\cdot t & (v\in\Rz) , \vspace{1mm} \\
&\;-w_{e_V}\cdot s-w_e\cdot u & (v\in\Sy) . 
\end{aligned}
\right. 
\end{equation*}
Define 
\begin{equation*}
\|\LL^0f^+\|_{\Di}^2
\:=
\sum_{v\in\Px\cup\Rz}\frac{\LL^0f(v)^2}{d_v}
,\quad
\|\LL^0f^-\|_{\Di}^2
\:=
\sum_{v\in\QQ\cup\Sy}\frac{\LL^0f(v)^2}{d_v}.
\end{equation*}
Since $r=\big(1-\#\Px\cdot p\big)\big/\#\Rz$ and $t=1\big/\#\Rz$ by \eqref{iii-1 condition}, we have
\begin{align*}
\|\LL^0f^+\|_{\Di}^2
&=
\sum_{v\in\Px}w_{e_V}\cdot p^2+\sum_{v\in\Rz}\frac{\dis\;\big(w_{e_V}\cdot r+w_e\cdot t\big)^2\;}{w_{e_V}+w_e} \\
&=
\#\Px\cdot w_{e_V}\cdot p^2+\frac{\#\Rz}{\;w_{e_V}+w_e\;}\Bigg(w_{e_V}\cdot\frac{\;1-\#\Px\cdot p\;}{\#\Rz}+w_e\cdot\frac{1}{\;\#\Rz\;}\Bigg)^2 \\
&=
\#\Px\cdot w_{e_V}\cdot p^2+\frac{1}{\;\#\Rz\;}\cdot\frac{1}{\;w_{e_V}+w_e\;}\Big\{-\#\Px\cdot w_{e_V}\cdot p+(w_{e_V}+w_e)\Big\}^2.
\end{align*}
Hence, we obtain
\begin{align}
\frac{\;\dd\|\LL^0f^+\|_{\Di}^2\;}{\dd p}
&=
2\#\Px\cdot w_{e_V}\cdot p+\frac{2}{\;\#\Rz\;}\cdot\frac{1}{\;w_{e_V}+w_e\;}\Big\{-\#\Px\cdot w_{e_V}\cdot p+(w_{e_V}+w_e)\Big\}\big(-\#\Px\cdot w_{e_V}\big) \notag \\
&=
2\#\Px\cdot w_{e_V}\Bigg(p-\frac{1}{\;\#\Rz\;}\cdot\frac{1}{\;w_{e_V}+w_e\;}\Big\{-\#\Px\cdot w_{e_V}\cdot p+(w_{e_V}+w_e)\Big\}\Bigg) \notag \\
&=
2\#\Px\cdot w_{e_V}\left\{\Bigg(1+\frac{\;\#\Px\;}{\;\#\Rz\;}\cdot\frac{w_{e_V}}{\;w_{e_V}+w_e\;}\Bigg)p-\frac{1}{\;\#\Rz\;}\right\} \notag \\
&=
\frac{2\#\Px\cdot w_{e_V}}{\;\#\Rz(w_{e_V}+w_e)\;}
\Big(\big\{\#\Rz(w_{e_V}+w_e)+\#\Px\cdot w_{e_V}\big\}p-(w_{e_V}+w_e)\Big) \notag \\
&=
\frac{\dis\;2\#\Px\cdot w_{e_V}\big\{\big(\#\Px+\#\Rz\big)w_{e_V}+\#\Rz\cdot w_e\big\}\;}{\;\#\Rz(w_{e_V}+w_e)\;}\Bigg(p-\frac{w_{e_V}+w_e}{\dis\;\big(\#\Px+\#\Rz\big)w_{e_V}+\#\Rz\cdot w_e\;}\Bigg). \label{iii-3 condition}
\end{align}
By \eqref{iii-1 condition}, we find $p\in\big[0,1\big/\#\Px\big]$.
Now, we consider the following cases separately:
\begin{align}
\bullet\;\;
\frac{w_{e_V}+w_e}{\dis\;\big(\#\Px+\#\Rz\big)w_{e_V}+\#\Rz\cdot w_e\;}
\le
\frac{1}{\;\#\Px\;}
&\quad\Big(\Leftrightarrow\;
\#\Px\cdot w_e\le\#\Rz\cdot w_{e_V}+\#\Rz\cdot w_e\Big), \label{iii-3-1 condition} \\
\bullet\;\;
\frac{w_{e_V}+w_e}{\dis\;\big(\#\Px+\#\Rz\big)w_{e_V}+\#\Rz\cdot w_e\;}
>
\frac{1}{\;\#\Px\;}
&\quad\Big(\Leftrightarrow\;
\#\Px\cdot w_e>\#\Rz\cdot w_{e_V}+\#\Rz\cdot w_e\Big).
\label{iii-3-2 condition}
\end{align}
\item[$\bullet$]
\textbf{Step 3.} 
We show that if \eqref{iii-3-1 condition} holds, then $\kakko(\LL^0f,\dez)=\kakko(\LL^0f,\dex)$.
By \eqref{iii-3 condition} and \eqref{iii-3-1 condition}, 
\begin{equation*}
p
=
\frac{w_{e_V}+w_e}{\dis\;\big(\#\Px+\#\Rz\big)w_{e_V}+\#\Rz\cdot w_e\;}\quad\Bigg(\in\bigg[0,\frac{1}{\#\Px}\bigg]\Bigg).
\end{equation*}
Then, we obtain
\begin{equation*}
r
=
\frac{1}{\;\#\Rz\;}\left\{1-\#\Px\cdot\frac{w_{e_V}+w_e}{\dis\;\big(\#\Px+\#\Rz\big)w_{e_V}+\#\Rz\cdot w_e\;}\right\}
=
\frac{1}{\;\#\Rz\;}\cdot\frac{\dis\#\Rz\cdot w_{e_V}+\big(\#\Rz-\#\Px\big)w_e}{\dis\;\big(\#\Px+\#\Rz\big)w_{e_V}+\#\Rz\cdot w_e\;}
\end{equation*}
and $r\in\big[0,1\big/\#\Rz\big]$ by \eqref{iii-1 condition}.
Hence, we obtain $\kakko(\LL^0f,\dez)=\kakko(\LL^0f,\dex)$ since
\begin{align*}
\kakko(\LL^0f,\dez)
&=
\frac{\;w_{e_V}\cdot r+w_e\cdot t\;}{w_{e_V}+w_e}
=
\frac{1}{\;w_{e_V}+w_e\;}\Bigg\{w_{e_V}\cdot\frac{1}{\;\#\Rz\;}\cdot\frac{\dis\#\Rz\cdot w_{e_V}+\big(\#\Rz-\#\Px\big)w_e}{\dis\;\big(\#\Px+\#\Rz\big)w_{e_V}+\#\Rz\cdot w_e\;}+w_e\cdot\frac{1}{\;\#\Rz\;}\Bigg\} \\
&=
\frac{1}{\;\#\Rz\;}\cdot\frac{1}{\;w_{e_V}+w_e\;}\cdot\frac{\dis\;\big\{\#\Rz\cdot w_{e_V}+\big(\#\Rz-\#\Px\big)w_e\big\}w_{e_V}+\big\{\big(\#\Px+\#\Rz\big)w_{e_V}+\#\Rz\cdot w_e\big\}w_e\;}{\dis\;\big(\#\Px+\#\Rz\big)w_{e_V}+\#\Rz\cdot w_e\;} \\
&=
\frac{1}{\;w_{e_V}+w_e\;}\cdot\frac{\dis w_{e_V}^2+2w_{e_V}w_e+w_e^2}{\dis\;\big(\#\Px+\#\Rz\big)w_{e_V}+\#\Rz\cdot w_e\;}
=
\frac{w_{e_V}+w_e}{\dis\;\big(\#\Px+\#\Rz\big)w_{e_V}+\#\Rz\cdot w_e\;} \\
&=
p 
=
\kakko(\LL^0f,\dex).
\end{align*}
\item[$\bullet$]
\textbf{Step 4.} 
We show that if $f$ is a minimizer of $\CC(x,y)$, then \eqref{iii-3-2 condition} cannot hold.
Precisely, assuming \eqref{iii-3-2 condition}, we shall show that there exists $g\in\Lipxy$ such that
\begin{equation}
\kakko(\LL^0g,\dex-\dey) < \kakko(\LL^0f,\dex-\dey). \label{iii-3-2 goal}
\end{equation}
\begin{itemize}
\item
First, we calculate $\kakko(\LL^0f,\dex-\dey)$.
In the case \eqref{iii-3-2 condition}, since $\dd\|\LL^0f^+\|_{\Di}^2\big/\dd p<0$ by \eqref{iii-3 condition}, we find $(p,r)=\big(1\big/\#\Px,0\big)$.
\begin{itemize}
\item[--]
If $\QQ=\emptyset$, then we find $(q,s)=\big(0,1\big/\#\Sy\big)$.
Moreover, $u=1\big/\#\Sy$ holds by \eqref{iii-1 condition}.
Thus, we have
\begin{equation}
\kakko(\LL^0f,\dex-\dey)
=
p+\frac{\;w_{e_V}\cdot s+w_e\cdot u\;}{w_{e_V}+w_e}
=
\frac{1}{\dis\;\#\Px\;}+\frac{1}{\dis\;\#\Sy\;}. \label{q condition emptyset}
\end{equation}
\item[--]
If $\QQ\neq\emptyset$, then a similar calculation to \textbf{Step 2} yields 
\begin{equation}
\frac{\;\dd\|\LL^0f^-\|_{\Di}^2\;}{\dd q}
=
\frac{\dis\;2\#\QQ\cdot w_{e_V}\big\{\big(\#\QQ+\#\Sy\big)w_{e_V}+\#\Sy\cdot w_e\big\}\;}{\;\#\Sy(w_{e_V}+w_e)\;}\Bigg(q-\frac{w_{e_V}+w_e}{\dis\;\big(\#\QQ+\#\Sy\big)w_{e_V}+\#\Sy\cdot w_e\;}\Bigg). \label{q condition}
\end{equation}
By \eqref{iii-1 condition}, similarly to \eqref{iii-3-1 condition} and \eqref{iii-3-2 condition}, we consider the following cases:
\begin{align}
\bullet\;\;
\frac{w_{e_V}+w_e}{\dis\;\big(\#\QQ+\#\Sy\big)w_{e_V}+\#\Sy\cdot w_e\;}
\le
\frac{1}{\;\#\QQ\;}
&\quad\Big(\Leftrightarrow\;
\#\QQ\cdot w_e\le\#\Sy\cdot w_{e_V}+\#\Sy\cdot w_e\Big), \label{iii-3-3 condition} \\
\bullet\;\;
\frac{w_{e_V}+w_e}{\dis\;\big(\#\QQ+\#\Sy\big)w_{e_V}+\#\Sy\cdot w_e\;}
>
\frac{1}{\;\#\QQ\;}
&\quad\Big(\Leftrightarrow\;
\#\QQ\cdot w_e>\#\Sy\cdot w_{e_V}+\#\Sy\cdot w_e\Big).
\label{iii-3-4 condition}
\end{align}
\underline{In the case \eqref{iii-3-3 condition}}, we observe
\begin{equation*}
q 
=
\frac{w_{e_V}+w_e}{\dis\;\big(\#\QQ+\#\Sy\big)w_{e_V}+\#\Sy\cdot w_e\;}
,\quad
s
=
\frac{1}{\;\#\Sy\;}\cdot\frac{\;\#\Sy\cdot w_{e_V}+\big(\#\Sy-\#\QQ\big)w_e\;}{\;\big(\#\QQ+\#\Sy\big)w_{e_V}+\#\Sy\cdot w_e\;}
,\quad
u
=
\frac{1}{\;\#\Sy\;}
\end{equation*}
by \eqref{q condition} and \eqref{iii-1 condition}.
Therefore, we obtain
\begin{equation}
\kakko(\LL^0f,\dex-\dey)
=
p+\frac{\;w_{e_V}\cdot s+w_e\cdot u\;}{w_{e_V}+w_e}
=
\frac{1}{\dis\;\#\Px\;}+\frac{w_{e_V}+w_e}{\;\big(\#\QQ+\#\Sy\big)w_{e_V}+\#\Sy\cdot w_e\;}. \label{fxy 1-1}
\end{equation}
\underline{In the case \eqref{iii-3-4 condition}}, since $\dd\|\LL^0f^+\|_{\Di}^2\big/\dd q<0$ holds by \eqref{q condition}, we have $(q,s)=\big(1\big/\#\QQ,0\big)$.
Therefore, we obtain
\begin{equation}
\kakko(\LL^0f,\dex-\dey)
=
p+\frac{\;w_e\cdot u\;}{\;w_{e_V}+w_e\;}
=
\frac{1}{\dis\;\#\Px\;}+\frac{1}{\dis\;\#\Sy\;}\cdot\frac{w_e}{\;w_{e_V}+w_e\;}. \label{fxy 1-2}
\end{equation}
\end{itemize}
\item
Next, we construct $g\in\Lipxy$ satisfying \eqref{iii-3-2 goal}.
We define $g\in\Lipxy$ by
\begin{equation*}
\dis g(v)
=
\left\{
\begin{aligned}
&\;f(y) & (v\in e) , \vspace{1mm} \\
&\;f(v) & (v\not\in e) . 
\end{aligned}
\right. 
\end{equation*}
By the definition of $g$, for any $\mathsf{b}_e(g)\in\mathrm{arg\,max}_{\mathsf{b}\in\mathsf{B}_e}\kakko(g,\mathsf{b})$ and $v\in V$, we have
\begin{equation}
\kakko(g,{\mathsf{b}_e(g)})\mathsf{b}_e(g)(v)=0. \label{no flow on e}
\end{equation}
By $(1)$ and $(2)$, $\mathsf{b}_{e_V}(g)\in\mathrm{arg\,max}_{\mathsf{b}\in\mathsf{B}_{e_V}}\kakko(g,\mathsf{b})$ is represented by
\begin{equation*}
\dis \mathsf{b}_{e_V}(g)(v)
\:=
\left\{
\begin{aligned}
&\;p^\prime & (v\in\Px) , \vspace{1mm} \\
&\;-q^\prime & (v\in\QQ) , \vspace{1mm} \\
&\;-s^\prime & (v\in e) , 
\end{aligned}
\right. 
\end{equation*}
where $p^\prime,q^\prime,s^\prime\in[0,1]$.
Therefore, we find $p^\prime=1\big/\#\Px$.
\begin{itemize}
\item[--]
If $\QQ=\emptyset$, then we have $(q^\prime,s^\prime)=\big(0,1\big/\#e\big)$.
Thus, we obtain
\begin{equation}
\kakko(\LL^0g,\dex-\dey)
=
p^\prime+\frac{w_{e_V}\cdot s^\prime}{\;w_{e_V}+w_e\;}
=
\frac{1}{\dis\;\#\Px\;}+\frac{w_{e_V}}{\;\#e(w_{e_V}+w_e)\;}. \label{gxy 1-0}
\end{equation}
\item[--]
If $\QQ\neq\emptyset$, then we have
\begin{equation*}
q^\prime
=
\frac{w_{e_V}}{\dis\;\big(\#\QQ+\#e\big)w_{e_V}+\#e\cdot w_e\;}\quad\Bigg(\in\bigg[0,\frac{1}{\#\QQ}\bigg]\Bigg)
,\quad
s^\prime
=
\frac{w_{e_V}+w_e}{\dis\;\big(\#\QQ+\#e\big)w_{e_V}+\#e\cdot w_e\;}\quad\Bigg(\in\bigg[0,\frac{1}{\#e}\bigg]\Bigg)
\end{equation*}
in the same way as \textbf{Step 2}.
Thus, we obtain
\begin{equation}
\kakko(\LL^0g,\dex-\dey)
=
p^\prime+\frac{w_{e_V}\cdot s^\prime}{\;w_{e_V}+w_e\;}
=
\frac{1}{\dis\;\#\Px\;}+\frac{w_{e_V}}{\;\big(\#\QQ+\#e\big)w_{e_V}+\#e\cdot w_e\;}. \label{gxy 1-1}
\end{equation}
\end{itemize}
\item
Finally, we show that \eqref{iii-3-2 goal} holds.
\begin{itemize}
\item[--]
If $\QQ=\emptyset$, then $\eqref{q condition emptyset}>\eqref{gxy 1-0}$ holds.
\item[--]
If $\QQ\neq\emptyset$ and \eqref{iii-3-3 condition} holds, then $\eqref{fxy 1-1}>\eqref{gxy 1-1}$ holds by $\#\Sy<\#e$ since $z\in e$.
\item[--]
If $\QQ\neq\emptyset$ and \eqref{iii-3-4 condition} holds, then $\eqref{fxy 1-2}>\eqref{gxy 1-1}$ holds.
Indeed, we have
\begin{align*}
&\;\quad 
w_e\Big\{\big(\#\QQ+\#e\big)w_{e_V}+\#e\cdot w_e\Big\}-w_{e_V}\cdot\#\Sy(w_{e_V}+w_e) \\
&=
\big(\#\QQ+\#e-\#\Sy\big)w_{e_V}w_e+\#e\cdot w_e^2-\#\Sy\cdot w_{e_V}^2 \\
&\ge
\big(\#\QQ+\#\Rz\big)w_{e_V}w_e+\#e\cdot w_e^2-\#\Sy\cdot w_{e_V}^2 \\
&>
\#\Sy(w_{e_V}+w_e)w_{e_V}+\#\Rz\cdot w_{e_V}w_e+\#e\cdot w_e^2-\#\Sy\cdot w_{e_V}^2 &\Big(\text{by \eqref{iii-3-4 condition}}\Big) \\
&=
\big(\#\Sy+\#\Rz\big)w_{e_V}w_e+\#e\cdot w_e^2 
>
0.
\end{align*}
\end{itemize}
Therefore, we obtain \eqref{iii-3-2 goal}.
\end{itemize}

This completes the proof of the case of \textbf{(A)}.
$\blacksquare$
\end{itemize}

\textbf{(B)} \underline{The case $y\not\in e$ and $\A=\kakko(f,\dez)$}:
We discuss similarly to \textbf{(A)}.
By $\A=\kakko(f,\dez)=\kakko(f,\dex)$, for any $\mathsf{b}_e(f)\in\mathrm{arg\,max}_{\mathsf{b}\in\mathsf{B}_e}\kakko(f,\mathsf{b})$ and $v\in V$, we have $\B=0$.
Hence, we consider
\begin{align*}
\Px &\:=
\Big\{v\in V\;\Big|\;\kakko(f,\dev)=\kakko(f,\dex),\;v\not\in e\Big\}\quad(\ni x), \\
\Qy &\:=
\Big\{v\in V\;\Big|\;\kakko(f,\dev)=\kakko(f,\dey),\;v\not\in e\Big\}\quad(\ni y),
\end{align*}
and $\mathsf{b}_{e_V}(f)\in\mathrm{arg\,max}_{\mathsf{b}\in\mathsf{B}_{e_V}}\kakko(f,\mathsf{b})$ represented by
\begin{equation*}
\dis \mathsf{b}_{e_V}(f)(v)
\:=
\left\{
\begin{aligned}
&\;p & (v\in\Px) , \vspace{1mm} \\
&\;-q & (v\in\Qy) , \vspace{1mm} \\
&\;r & (v\in e), 
\end{aligned}
\right. 
\end{equation*}
where $p,q,r\in[0,1]$.
Thus, we have $q=1\big/\#\Qy$.
Moreover, as in \textbf{(A)}, we observe
\begin{equation*}
p
=
\frac{w_{e_V}}{\dis\;\big(\#\Px+\#e\big)w_{e_V}+\#e\cdot w_e\;}\quad\Bigg(\in\bigg[0,\frac{1}{\#\Px}\bigg]\Bigg)
,\quad
r
=
\frac{w_{e_V}+w_e}{\dis\;\big(\#\Px+\#e\big)w_{e_V}+\#e\cdot w_e\;}\quad\Bigg(\in\bigg[0,\frac{1}{\#e}\bigg]\Bigg).
\end{equation*}
This implies that
\begin{equation*}
\kakko(\LL^0f,\dez)
=
\frac{w_{e_V}\cdot r}{\;w_{e_V}+w_e\;}
=
\frac{w_{e_V}}{\dis\;\big(\#\Px+\#e\big)w_{e_V}+\#e\cdot w_e\;}
=
p
=
\kakko(\LL^0f,\dex).
\end{equation*}
This completes the proof of the case of \textbf{(B)}.
$\blacksquare$

\textbf{(C)} \underline{The case $y\not\in e$ and $\A\in\Big(\kakko(f,\dey),\,\kakko(f,\dez)\Big)$}:
In this case, we need to consider
\begin{align*}
\Px &\:=
\Big\{v\in V\;\Big|\;\kakko(f,\dev)=\kakko(f,\dex),\;v\not\in e\Big\}\quad(\ni x), \\
\Qy &\:=
\Big\{v\in V\;\Big|\;\kakko(f,\dev)=\kakko(f,\dey),\;v\not\in e\Big\}\quad(\ni y), \\
\Rz &\:=
\Big\{v\in V\;\Big|\;\kakko(f,\dev)=\kakko(f,\dez),\;v\in e\Big\}\quad(\ni z), \\
\SS\;\; &\:=
\Big\{v\in V\;\Big|\;\kakko(f,\dev)=\A,\;v\in e\Big\}.
\end{align*}
Then, $\mathsf{b}_{e_V}(f)\in\mathrm{arg\,max}_{\mathsf{b}\in\mathsf{B}_{e_V}}\kakko(f,\mathsf{b})$ 
and $\mathsf{b}_e(f)\in\mathrm{arg\,max}_{\mathsf{b}\in\mathsf{B}_e}\kakko(f,\mathsf{b})$ are represented by
\begin{equation*}
\dis \mathsf{b}_{e_V}(f)(v)
\:=
\left\{
\begin{aligned}
&\;p & (v\in\Px) , \vspace{1mm} \\
&\;-q & (v\in\Qy) , \vspace{1mm} \\
&\;r & (v\in\Rz) , \vspace{1mm} \\
&\;0 & (v\in\SS) , \vspace{1mm} \\
&\;0 & (\text{otherwise}), 
\end{aligned}
\right. 
\qquad\text{and}\qquad
\mathsf{b}_e(f)(v)
\:=
\left\{
\begin{aligned}
&\;0 & (v\in\Px) , \vspace{1mm} \\
&\;0 & (v\in\Qy) , \vspace{1mm} \\
&\;t & (v\in\Rz) , \vspace{1mm} \\
&\;-u & (v\in\SS) , \vspace{1mm} \\
&\;0 & (\text{otherwise}) ,
\end{aligned}
\right. 
\end{equation*}
where $p,q,r,t,u\in[0,1]$.
Note that $q=1\big/\#\Qy$, $t=1\big/\#\Rz$, $u=1\big/\#\SS$ and $\B=\kakko(f,\dez)-\A\in(0,1)$.
Then, since we have $\LL^0f=w_{e_V}\mathsf{b}_{e_V}(f)+\B w_e\mathsf{b}_e(f)$, by a similar calculation to \textbf{(A)}, we obtain
\begin{align}
\frac{\;\dd\|\LL^0f^+\|_{\Di}^2\;}{\dd p}
&=
\frac{\dd}{\;\dd p\;}\sum_{v\in\Px\cup\Rz}\frac{\;\LL^0f(v)^2\;}{d_v} \notag \\
&=
\frac{\dis\;2\#\Px\cdot w_{e_V}\big\{\big(\#\Px+\#\Rz\big)w_{e_V}+\#\Rz\cdot w_e\big\}\;}{\;\#\Rz(w_{e_V}+w_e)\;}\Bigg(p-\frac{w_{e_V}+\B w_e}{\dis\;\big(\#\Px+\#\Rz\big)w_{e_V}+\#\Rz\cdot w_e\;}\Bigg). \label{alpha condition}
\end{align}
Therefore, we consider the following cases:
\begin{align}
\bullet\;\;
\frac{w_{e_V}+\B w_e}{\dis\;\big(\#\Px+\#\Rz\big)w_{e_V}+\#\Rz\cdot w_e\;}
\le
\frac{1}{\;\#\Px\;}
&\quad\Big(\Leftrightarrow\;
\B\#\Px\cdot w_e\le\#\Rz\cdot w_{e_V}+\#\Rz\cdot w_e\Big), \label{iii-alpha-1 condition} \\
\bullet\;\;
\frac{w_{e_V}+\B w_e}{\dis\;\big(\#\Px+\#\Rz\big)w_{e_V}+\#\Rz\cdot w_e\;}
>
\frac{1}{\;\#\Px\;}
&\quad\Big(\Leftrightarrow\;
\B\#\Px\cdot w_e>\#\Rz\cdot w_{e_V}+\#\Rz\cdot w_e\Big).
\label{iii-alpha-2 condition}
\end{align}
We show that if \eqref{iii-alpha-1 condition} or \eqref{iii-alpha-2 condition} holds, then there exists $g\in\Lipxy$ satisfying \eqref{iii-3-2 goal} to conclude that $f$ cannot be a minimizer of $\CC(x,y)$.
\begin{itemize}
\item[$\ast$]
First, we calculate $\kakko(\LL^0f,\dex-\dey)$ in each case.
\begin{itemize}
\item[--]
If \eqref{iii-alpha-1 condition} holds, then by \eqref{alpha condition}, we find
\begin{equation*}
p
=
\frac{w_{e_V}+\B w_e}{\dis\;\big(\#\Px+\#\Rz\big)w_{e_V}+\#\Rz\cdot w_e\;}
,\quad
r
=
\frac{1}{\dis\;\#\Rz\;}\cdot\frac{\dis\;\#\Rz\cdot w_{e_V}+(\#\Rz-\B\#\Px)w_e\;}{\dis\;\big(\#\Px+\#\Rz\big)w_{e_V}+\#\Rz\cdot w_e\;}.
\end{equation*}
Thus, we obtain
\begin{equation}
\kakko(\LL^0f,\dex-\dey)
=
p+q
=
\frac{w_{e_V}+\B w_e}{\dis\;\big(\#\Px+\#\Rz\big)w_{e_V}+\#\Rz\cdot w_e\;} + \frac{1}{\dis\;\#\Qy\;}. \label{alpha x-y 1}
\end{equation}
\item[--]
If \eqref{iii-alpha-2 condition} holds, then by \eqref{alpha condition}, we find $(p,r)=\big(1\big/\#\Px,0\big)$.
Thus, we obtain
\begin{equation}
\kakko(\LL^0f,\dex-\dey)
=
p+q
=
\frac{1}{\dis\;\#\Px\;} + \frac{1}{\dis\;\#\Qy\;}. \label{alpha x-y 2}
\end{equation}
\end{itemize}
\item[$\ast$]
Next, we define $g\in\Lipxy$ by
\begin{equation*}
\dis g(v)
=
\left\{
\begin{aligned}
&\;f(z) & (v\in e) , \vspace{1mm} \\
&\;f(v) & (v\not\in e) . 
\end{aligned}
\right. 
\end{equation*}
By the definition of $g$, for any $\mathsf{b}_e(g)\in\mathrm{arg\,max}_{\mathsf{b}\in\mathsf{B}_e}\kakko(g,\mathsf{b})$ and $v\in V$, we have \eqref{no flow on e}.
Moreover, $\mathsf{b}_{e_V}(g)\in\mathrm{arg\,max}_{\mathsf{b}\in\mathsf{B}_{e_V}}\kakko(g,\mathsf{b})$ is represented by
\begin{equation*}
\dis \mathsf{b}_{e_V}(g)(v)
\:=
\left\{
\begin{aligned}
&\;p^\prime & (v\in\Px) , \vspace{1mm} \\
&\;-q^\prime & (v\in\Qy) , \vspace{1mm} \\
&\;s^\prime & (v\in e) , 
\end{aligned}
\right. 
\end{equation*}
where $p^\prime,q^\prime,s^\prime\in[0,1]$.
Then, by the same calculation as \textbf{Step 2} in \textbf{(B)}, we obtain $q^\prime=1\big/\#\Qy$ and 
\begin{equation*}
p^\prime
=
\frac{w_{e_V}}{\dis\;\big(\#\Px+\#e\big)w_{e_V}+\#e\cdot w_e\;}\quad\Bigg(\in\bigg[0,\frac{1}{\#\Px}\bigg]\Bigg)
,\quad
s^\prime
=
\frac{w_{e_V}+w_e}{\dis\;\big(\#\Px+\#e\big)w_{e_V}+\#e\cdot w_e\;}\quad\Bigg(\in\bigg[0,\frac{1}{\#e}\bigg]\Bigg).
\end{equation*} 
Hence, 
\begin{equation}
\kakko(\LL^0g,\dex-\dey)
=
p^\prime+q^\prime
=
\frac{w_{e_V}}{\dis\;\big(\#\Px+\#e\big)w_{e_V}+\#e\cdot w_e\;}+\frac{1}{\dis\;\#\Qy\;}. \label{alpha x-y}
\end{equation}
\item[$\ast$]
Finally, we show that \eqref{iii-3-2 goal} holds.
\begin{itemize}
\item[--]
In the case \eqref{iii-alpha-1 condition}, we have $\eqref{alpha x-y 1}>\eqref{alpha x-y}$ by $\B>0$ and $\#\Rz<\#e$.
\item[--]
In the case \eqref{iii-alpha-2 condition}, we have $\eqref{alpha x-y 2}>\eqref{alpha x-y}$ by $\#e>0$.
\end{itemize}
Therefore, we obtain \eqref{iii-3-2 goal}.
\end{itemize}
This completes the proof of the case of \textbf{(C)}.
$\blacksquare$

\textbf{(D)} \underline{The case $y\not\in e$ and $\A=\kakko(f,\dey)$}:
We have $\B=1$.
As in \textbf{(C)}, we consider $\Px,\Qy,\Rz,\SS$.
Therefore, as in \textbf{Step 4} of \textbf{(A)}, it is sufficient to show that assuming \eqref{iii-3-2 condition}, there exists $g\in\Lipxy$ satisfying \eqref{iii-3-2 goal}.
\begin{itemize}
\item[$\ast$]
First, we calculate $\kakko(\LL^0f,\dex-\dey)$.
By a similar calculation to \eqref{q condition}, we have
\begin{equation*}
\frac{\;\dd\|\LL^0f^-\|_{\Di}^2\;}{\dd q}
=
\frac{\dis\;2\#\Qy\cdot w_{e_V}\big\{\big(\#\Qy+\#\SS\big)w_{e_V}+\#\SS\cdot w_e\big\}\;}{\;\#\SS(w_{e_V}+w_e)\;}\Bigg(q-\frac{w_{e_V}+w_e}{\dis\;\big(\#\Qy+\#\SS\big)w_{e_V}+\#\SS\cdot w_e\;}\Bigg).
\end{equation*}
Then, we consider the following cases:
\begin{align}
\bullet\;\;
\frac{w_{e_V}+w_e}{\dis\;\big(\#\Qy+\#\SS\big)w_{e_V}+\#\SS\cdot w_e\;}
\le
\frac{1}{\;\#\Qy\;}
&\quad\Big(\Leftrightarrow\;
\#\Qy\cdot w_e\le\#\SS\cdot w_{e_V}+\#\SS\cdot w_e\Big), \label{iv-1 condition} 
\\
\bullet\;\;
\frac{w_{e_V}+w_e}{\dis\;\big(\#\Qy+\#\SS\big)w_{e_V}+\#\SS\cdot w_e\;}
>
\frac{1}{\;\#\Qy\;}
&\quad\Big(\Leftrightarrow\;
\#\Qy\cdot w_e>\#\SS\cdot w_{e_V}+\#\SS\cdot w_e\Big).
\label{iv-2 condition}
\end{align}
\begin{itemize}
\item[--]
In the case \eqref{iv-1 condition}, we observe
\begin{equation*}
q 
=
\frac{w_{e_V}+w_e}{\dis\;\big(\#\Qy+\#\SS\big)w_{e_V}+\#\SS\cdot w_e\;}
,\quad
s
=
\frac{1}{\;\#\SS\;}\cdot\frac{\;\#\SS\cdot w_{e_V}+\big(\#\SS-\#\Qy\big)w_e\;}{\;\big(\#\Qy+\#\SS\big)w_{e_V}+\#\SS\cdot w_e\;}.
\end{equation*}
Therefore, we obtain
\begin{equation}
\kakko(\LL^0f,\dex-\dey)
=
p+q
=
\frac{1}{\dis\;\#\Px\;}+\frac{w_{e_V}+w_e}{\dis\;\big(\#\Qy+\#\SS\big)w_{e_V}+\#\SS\cdot w_e\;}. \label{fxy 4-1}
\end{equation}
\item[--]
In the case \eqref{iv-2 condition}, we find $(q,s)=\big(1\big/\#\Qy,0\big)$.
Therefore, we obtain
\begin{equation}
\kakko(\LL^0f,\dex-\dey)
=
p+q
=
\frac{1}{\dis\;\#\Px\;}+\frac{1}{\dis\;\#\Qy\;}. \label{fxy 4-2}
\end{equation}
\end{itemize}
\item[$\ast$]
Next, we construct $g\in\Lipxy$ satisfying \eqref{iii-3-2 goal}.
We define $g\in\Lipxy$ by
\begin{equation}
\dis g(v)
=
\left\{
\begin{aligned}
&\;f(y) & (v\in e) , \vspace{1mm} \\
&\;f(v) & (v\not\in e) . 
\end{aligned}
\right. \label{g: no flow on e}
\end{equation}
By the definition of $g$, for any $\mathsf{b}_e(g)\in\mathrm{arg\,max}_{\mathsf{b}\in\mathsf{B}_e}\kakko(g,\mathsf{b})$ and $v\in V$, we have \eqref{no flow on e}.
By $(1)$ and $(2)$, $\mathsf{b}_{e_V}(g)\in\mathrm{arg\,max}_{\mathsf{b}\in\mathsf{B}_{e_V}}\kakko(g,\mathsf{b})$ is represented by
\begin{equation*}
\dis \mathsf{b}_{e_V}(g)(v)
\:=
\left\{
\begin{aligned}
&\;p^\prime & (v\in\Px) , \vspace{1mm} \\
&\;-q^\prime & (v\in\Qy) , \vspace{1mm} \\
&\;-s^\prime & (v\in e) , 
\end{aligned}
\right. 
\end{equation*}
where $p^\prime,q^\prime,s^\prime\in[0,1]$.
Therefore, we have $p^\prime=1\big/\#\Px$ and 
\begin{equation*}
q^\prime
=
\frac{w_{e_V}}{\dis\;\big(\#\Qy+\#e\big)w_{e_V}+\#e\cdot w_e\;}\quad\Bigg(\in\bigg[0,\frac{1}{\#\QQ}\bigg]\Bigg)
,\quad
s^\prime
=
\frac{w_{e_V}+w_e}{\dis\;\big(\#\Qy+\#e\big)w_{e_V}+\#e\cdot w_e\;}\quad\Bigg(\in\bigg[0,\frac{1}{\#e}\bigg]\Bigg)
\end{equation*}
in the same way as \textbf{Step 2} in \textbf{(A)}.
Thus, we obtain
\begin{equation}
\kakko(\LL^0g,\dex-\dey)
=
p^\prime+q^\prime
=
\frac{1}{\dis\;\#\Px\;}+\frac{w_{e_V}}{\dis\;\big(\#\Qy+\#e\big)w_{e_V}+\#e\cdot w_e\;}. \label{gxy 4-1}
\end{equation}
\item[$\ast$]
Finally, we show that \eqref{iii-3-2 goal} holds.
\begin{itemize}
\item[--]
In the case \eqref{iv-1 condition}, it holds that $\eqref{fxy 4-1}>\eqref{gxy 4-1}$ by $\#\SS<\#e$ since $z\not\in\SS$.
\item[--]
In the case \eqref{iv-2 condition}, it holds that $\eqref{fxy 4-2}>\eqref{gxy 4-1}$ by $\#e>0$.
\end{itemize}
Therefore, we obtain \eqref{iii-3-2 goal}.
\end{itemize}

This completes the proof of the case of \textbf{(D)}.
$\blacksquare$

\vspace{1mm}
\textbf{\underline{The case $z\not\in e$ and $x\in e$}:} 
This follows from a similar proof to that of the case $z\in e$ and $x\not\in e$.
\item[$(4)$]
This case can be reduced to $(3)$.
Indeed, if $f$ is a minimizer of $\CC(x,y)$, then $-f$ is a minimizer of $\CC(y,x)=\CC(x,y)$.
\end{itemize}
\end{proof}

Although it is unknown under what weight conditions \autoref{key property} holds for general hypergraphs, we expect it to be true for at least unweighted hypergraphs.

\begin{cor} \label{key cor}
Let $E=\{e_V,e\}$, where $e_V$ is a hyperedge including all vertices.
In addition, let $x,y\in V$ and $f\in\Lipxy$.
If $f$ is a minimizer of $\CC(x,y)$, then 
\begin{equation*}
\min_{v\in e}\kakko(f,\dev),\;\max_{v\in e}\kakko(f,\dev)\in\big\{\kakko(f,\dey),\;\kakko(f,\dex)\big\}
\quad\text{ and }\quad
\max_{\mathsf{b}\in\mathsf{B}_e}\kakko(f,\mathsf{b})\in\{0,1\}.
\end{equation*}
\end{cor}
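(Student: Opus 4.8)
The plan is to deduce this from \autoref{key property} plus one additional flattening argument, after two reductions. Since $e_V$ contains every vertex, all pairs of distinct vertices are adjacent, so $d(x,y)=1=\diam(H)$, and then $f\in\Lipxy$ forces \eqref{max:x min:y}: $\kakko(f,\dex)=\max_{v\in V}\kakko(f,\dev)$ and $\kakko(f,\dey)=\min_{v\in V}\kakko(f,\dev)$, hence $\kakko(f,\dex)-\kakko(f,\dey)=1$. Also, the computation in the proof of \autoref{grad lemma} gives $\max_{\mathsf{b}\in\mathsf{B}_e}\kakko(f,\mathsf{b})=\max_{v\in e}\kakko(f,\dev)-\min_{v\in e}\kakko(f,\dev)$, which is $\geq0$ as $0\in\mathsf{B}_e$. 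So once I show that $\min_{v\in e}\kakko(f,\dev)$ and $\max_{v\in e}\kakko(f,\dev)$ both lie in $\{\kakko(f,\dey),\kakko(f,\dex)\}$, the value $\max_{\mathsf{b}\in\mathsf{B}_e}\kakko(f,\mathsf{b})$ is a difference of two elements of this two-point set, hence $0$ or $1$; everything thus reduces to the first assertion.

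Put $\mu:=\min_{v\in e}\kakko(f,\dev)$, $M:=\max_{v\in e}\kakko(f,\dev)$, so $\kakko(f,\dey)\leq\mu\leq M\leq\kakko(f,\dex)$, with $M=\kakko(f,\dex)$ if $x\in e$ and $\mu=\kakko(f,\dey)$ if $y\in e$. I would argue by contradiction, assuming $\mu$ or $M$ lies in $\big(\kakko(f,\dey),\kakko(f,\dex)\big)$, and split into cases. \textbf{(a)} Some $z\in e$ has $\kakko(f,\dez)=\kakko(f,\dex)$ with $x\notin e$ (so $M=\kakko(f,\dex)$): this is exactly the setup of the proof of \autoref{key property}\,(3), whose cases (A), (B), (D) give $\mu\in\{\kakko(f,\dey),\kakko(f,\dex)\}$ and whose case (C) — $\mu$ interior — is precisely the configuration shown there to contradict minimality; the "$z\notin e$, $x\in e$" half of that proof treats the sub-case $x\in e$ when a global maximizer of $\kakko(f,\cdot)$ lies outside $e$. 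The mirror statement (some $z\in e$ with $\kakko(f,\dez)=\kakko(f,\dey)$, giving $M\in\{\kakko(f,\dey),\kakko(f,\dex)\}$) follows from \autoref{key property}\,(4) applied to $-f$, a minimizer of $\CC(y,x)=\CC(x,y)$.

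The remaining configurations I would handle directly, flattening $e$ to a constant as in the proof of \autoref{key property}. \textbf{(b)} No vertex of $e$ attains $\kakko(f,\dex)$ or $\kakko(f,\dey)$; then $x,y\notin e$ and $\kakko(f,\dey)<\mu\leq M<\kakko(f,\dex)$. Let $g$ agree with $f$ off $e$ and satisfy $\kakko(g,\dev)=\kakko(f,\dey)$ for all $v\in e$; then $g\in\Lipxy$ (only values inside $e$ change, $x,y\notin e$) and the $e$-term of $\LL^0g$ vanishes by \eqref{no flow on e}, $g$ being constant on $e$. Since $x,y\notin e$ we have $d_x=d_y=w_{e_V}$ and the $e$-term of $\LL^0f$ avoids $x,y$, so the norm-minimization forces $\LL^0f$ to be uniform on the set $P$ of global maximizers and on the set $Q$ of global minimizers of $\kakko(f,\cdot)$, giving $\kakko(\LL^0f,\dex-\dey)=\tfrac1{\#P}+\tfrac1{\#Q}$; for $g$ the maximizer set is still $P$ but the minimizer set is $Q\cup e$, so its minimum-norm negative part is spread over strictly more vertices, $-\kakko(\LL^0g,\dey)<\tfrac1{\#Q}$, and $\kakko(\LL^0g,\dex-\dey)<\kakko(\LL^0f,\dex-\dey)$: contradiction. \textbf{(c)} $x\in e$ (so $M=\kakko(f,\dex)$), $\mu$ interior, and every global maximizer of $\kakko(f,\cdot)$ lies in $e$ (so (a) does not apply); then $y\notin e$. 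Flatten $e$ upward, $g\equiv f(x)$ on $e$; again $g\in\Lipxy$ and the $e$-term of $\LL^0g$ vanishes. The sets $\argmax_{v\in e}\kakko(f,\dev)$, $\argmin_{v\in e}\kakko(f,\dev)$, $Q$ are pairwise disjoint, so the norm-minimization decouples and $\LL^0f$ is uniform on $\argmax_{v\in e}\kakko(f,\dev)$ at value $(w_{e_V}+w_e\B)/\#\argmax_{v\in e}\kakko(f,\dev)$, where $\B:=\max_{\mathsf{b}\in\mathsf{B}_e}\kakko(f,\mathsf{b})>0$; for $g$ only the $e_V$-term survives, uniform over all of $e$, so $\kakko(\LL^0g,\dex)=w_{e_V}/(\#e\,(w_{e_V}+w_e))<\kakko(\LL^0f,\dex)$ because $\#\argmax_{v\in e}\kakko(f,\dev)\leq\#e$ and $\B>0$, while $\kakko(\LL^0f,\dey)=\kakko(\LL^0g,\dey)$: again a contradiction. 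The case $y\in e$ is symmetric.

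The main obstacle is organizational: checking that (a)--(c) exhaust all cases, and dealing with the situations where $x$ or $y$ lies in $e$, since then $\LL^0f$ at that vertex absorbs contributions from both hyperedges. The clean way through is the disjointness-of-supports observation used in (c), which evaluates $\LL^0f$ explicitly at a global maximizer lying in $e$ instead of tracking the coupled minimization over the two base-polytope selections $\mathsf{b}_{e_V}(f)$ and $\mathsf{b}_e(f)$.
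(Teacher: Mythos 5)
Your proposal is correct and follows essentially the same route as the paper: the cases where some vertex of $e$ sits at the extreme level $\kakko(f,\dex)$ or $\kakko(f,\dey)$ are delegated to the case analysis in the proof of \autoref{key property}(3)--(4), and the remaining fully interior configuration is killed by flattening $e$ to the level $\kakko(f,\dey)$ and comparing $1/\#P+1/\#Q$ with the strictly smaller value obtained when the negative mass spreads over $Q\cup e$, exactly as in the paper. Your only departures are cosmetic but welcome: you state the flattened function via $\kakko(g,\dev)=\kakko(f,\dey)$ rather than $g(v)=f(y)$ (the correct normalization when $y\notin e$, since degrees differ), and you write out the upward-flattening computation for the sub-case $x\in e$ that the paper dispatches with ``a similar proof.''
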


\begin{proof}
In the case $x,y\in e$, we clearly have the assertions with $\max_{\mathsf{b}\in\mathsf{B}_e}\kakko(f,\mathsf{b})=1$.

Assume $x\not\in e$ or $y\not\in e$.
We remark that by the proof of \autoref{key property}(3), if $f$ is a minimizer of $\CC(x,y)$ and there exists $z\in e$ such that $\kakko(f,\dez)=\kakko(f,\dex)$, then $\min_{v\in e}\kakko(f,\dev)\not\in\big(\kakko(f,\dey),\kakko(f,\dex)\big)$, in particular $\min_{v\in e}\kakko(f,\dev)\in\big\{\kakko(f,\dey),\kakko(f,\dex)\big\}$ and $\max_{\mathsf{b}\in\mathsf{B}_e}\kakko(f,\mathsf{b})\in\{0,1\}$.
Similarly, if there exists $z\in e$ such that $\kakko(f,\dez)=\kakko(f,\dey)$, then $\max_{v\in e}\kakko(f,\dev)\in\big\{\kakko(f,\dey),\kakko(f,\dex)\big\}$ and $\max_{\mathsf{b}\in\mathsf{B}_e}\kakko(f,\mathsf{b})\in\{0,1\}$.
Therefore, it is sufficient to prove that assuming $\kakko(f,\dey)<\min_{v\in e}\kakko(f,\dev)\le\max_{v\in e}\kakko(f,\dev)<\kakko(f,\dex)$, there exists $g\in\Lipxy$ such that $\kakko(\LL^0g,\dex-\dey)<\kakko(\LL^0f,\dex-\dey)$.
Note that by assumption and \autoref{key property}(1)(2), we have $x,y\not\in e$ and
\begin{equation*}
\kakko(\LL^0f,\dex-\dey)
=
\frac{1}{\dis\;\#\big\{v\in V\;\big|\;\kakko(f,\dev)=\kakko(f,\dex)\big\}\;}+\frac{1}{\;\#\big\{v\in V\;\big|\;\kakko(f,\dev)=\kakko(f,\dey)\big\}\;}.
\end{equation*}
We define $g\in\Lipxy$ as in \eqref{g: no flow on e}.
Then, by a similar calculation to \eqref{gxy 4-1}, we obtain
\begin{align*}
\kakko(\LL^0g,\dex-\dey)
&=
\frac{1}{\dis\;\#\big\{v\in V\;\big|\;\kakko(g,\dev)=\kakko(g,\dex)\big\}\;}+\frac{w_{e_V}}{\;\Big(\#\big\{v\in V\backslash e\;\big|\;\kakko(g,\dev)=\kakko(g,\dey)\big\}+\#e\Big)w_{e_V}+\#e\cdot w_e\;} \\
&=
\frac{1}{\dis\;\#\big\{v\in V\;\big|\;\kakko(f,\dev)=\kakko(f,\dex)\big\}\;}+\frac{w_{e_V}}{\;\Big(\#\big\{v\in V\;\big|\;\kakko(f,\dev)=\kakko(f,\dey)\big\}+\#e\Big)w_{e_V}+\#e\cdot w_e\;} \\
&<
\frac{1}{\dis\;\#\big\{v\in V\;\big|\;\kakko(f,\dev)=\kakko(f,\dex)\big\}\;}+\frac{1}{\;\#\big\{v\in V\;\big|\;\kakko(f,\dev)=\kakko(f,\dey)\big\}+\#e\;}.
\end{align*}
Hence, $\kakko(\LL^0g,\dex-\dey)<\kakko(\LL^0f,\dex-\dey)$ holds.
\end{proof}

\begin{thm} \label{main theorem 2}
Let $E=\{e_V,e\}$, where $e_V$ is a hyperedge including all vertices.
In addition, let $x,y\in V$ and $f\in\Lipxy$.
If $f$ is a minimizer of $\CC(x,y)$, then $\kakko(f,\dev)=\kakko(f,\dex)$ or $\kakko(f,\dev)=\kakko(f,\dey)$ holds for every $v\in V$.
\end{thm}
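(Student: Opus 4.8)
The proof will be by contradiction: from a minimizer $f$ carrying an intermediate value we construct a competitor in $\Lipxy$ with strictly smaller $\kakko(\LL^0\,\cdot\,,\dex-\dey)$.

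First, since $e_V$ contains every vertex, all distinct vertices are adjacent, so $d(x,y)=1$ and hence $\CC(x,y)=\kakko(\LL^0 f,\dex-\dey)$ whenever $f$ is a minimizer; moreover, by \eqref{max:x min:y}, $\kakko(f,\dex)=\max_{v}\kakko(f,\dev)$ and $\kakko(f,\dey)=\min_{v}\kakko(f,\dev)$. Suppose, toward a contradiction, that $f$ is a minimizer and $\kakko(f,\dey)<\kakko(f,\delta_{v_0})<\kakko(f,\dex)$ for some $v_0\in V$. By \autoref{key cor}, $\max_{\mathsf{b}\in\mathsf{B}_e}\kakko(f,\mathsf{b})\in\{0,1\}$; if it is $0$ then $e$ is constant with common value $\kakko(f,\dex)$ or $\kakko(f,\dey)$, and if it is $1$ then $\max_{v\in e}\kakko(f,\dev)=\kakko(f,\dex)$ and $\min_{v\in e}\kakko(f,\dev)=\kakko(f,\dey)$; in particular $v_0\in e$ is possible only in the latter case, and then $e$ still carries a vertex at each of the two extremes.

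Next set $\mathcal{A}:=\{v\mid\kakko(f,\dev)=\kakko(f,\dex)\}$ and $\mathcal{B}:=\{v\mid\kakko(f,\dev)=\kakko(f,\dey)\}$, and let $g\in\rv$ agree with $f$ except that $\kakko(g,\delta_{v_0})$ is reset to $\kakko(f,\dex)$ or to $\kakko(f,\dey)$ — when $v_0\notin e$ either choice is admissible, while when $v_0\in e$ we move $v_0$ to the extreme at which $e$ retains another vertex (possible by the previous paragraph). Since $v_0\notin\{x,y\}$ and $d\equiv 1$ off the diagonal, $g\in\Lipxy$, and $\max_{\mathsf{b}\in\mathsf{B}_e}\kakko(g,\mathsf{b})=\max_{\mathsf{b}\in\mathsf{B}_e}\kakko(f,\mathsf{b})$; moreover exactly one of $\mathcal{A},\mathcal{B}$ is strictly enlarged for $g$ and the other is unchanged. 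The plan is then to compute $\kakko(\LL^0 g,\dex-\dey)$ exactly as in the proof of \autoref{key property}(3) — the structure $E=\{e_V,e\}$ reduces $\LL^0 g$ to an explicit optimal distribution of the $e_V$- and $e$-flows over the level sets, with $\LL^0 g$ vanishing on intermediate vertices by \autoref{grad lemma} — and to check that enlarging a level set strictly decreases the flow value at the corresponding endpoint $x$ or $y$; this gives $\kakko(\LL^0 g,\dex-\dey)<\kakko(\LL^0 f,\dex-\dey)=\CC(x,y)$, contradicting minimality of $f$. Hence no such $v_0$ exists.

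The crux is the strictness of this last decrease, and it is where the case analysis of \autoref{key property}(3) must be redone and refined. The obstacle is that when $x\in e$ (resp. $y\in e$) the $e$-flow may already saturate $e\cap\mathcal{A}$ (resp. $e\cap\mathcal{B}$), so merely adding a vertex to the level set need not lower $\kakko(\LL^0 g,\dex)$ (resp. $\kakko(\LL^0 g,\dey)$). One therefore splits into the cases $v_0\in e$ versus $v_0\notin e$ and $x,y\in e$ versus $x,y\notin e$, and in the saturated subcases invokes the explicit formulas \eqref{iii-3-1 condition}--\eqref{iv-2 condition} from that proof, together with the fact (a consequence of \autoref{key property}(3),(4)) that a minimizer sits at worst exactly at the saturation threshold, which is enough to make the chosen move of $v_0$ — to the top when $v_0\in e$, and to whichever extreme avoids saturation when $v_0\notin e$ — strictly beneficial. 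Carrying out this refined bookkeeping is the bulk of the argument.
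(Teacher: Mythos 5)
Your overall strategy coincides with the paper's: argue by contradiction, push intermediate-valued vertices onto one of the two extreme level sets to get a competitor $g\in\Lipxy$, and show that $\kakko(\LL^0g,\dex-\dey)$ strictly decreases. (The paper absorbs the whole intermediate set $\VV(f)$ into the top level set in one step rather than moving a single $v_0$, and, like you, uses \autoref{key cor} to see that the extrema of $f$ over $e$ already sit at the two extreme values, so that $\max_{\mathsf{b}\in\mathsf{B}_e}\kakko(g,\mathsf{b})=\max_{\mathsf{b}\in\mathsf{B}_e}\kakko(f,\mathsf{b})$.)

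The gap is that the decisive step is announced rather than carried out. Everything is reduced to the assertion that ``enlarging a level set strictly decreases the flow value at the corresponding endpoint,'' the saturated subcases are flagged as the obstacle, and then you write that ``carrying out this refined bookkeeping is the bulk of the argument'' --- but that bookkeeping \emph{is} the proof. What must be established is the exact value of the competitor's Laplacian on the enlarged level set, namely that $\kakko(\LL^0g,\dev)$ is the \emph{same} constant
\begin{equation*}
\mathsf{T}
=
\frac{\dis w_{e_V}+w_e\max_{\mathsf{b}\in\mathsf{B}_e}\kakko(g,\mathsf{b})}{\dis \sum_{u}d_u}
\qquad
\bigl(\text{the sum running over the enlarged upper level set}\bigr)
\end{equation*}
for every $v$ in that set. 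This requires two things the proposal does not supply: that the equidistributed candidate is realizable by admissible flows (the Claim in the paper's proof that $\mathsf{T}\in[0,1]$, which uses that the enlarged set meets $e$ whenever the $e$-flow is nonzero), and that it is the \emph{norm-minimizing} element of $\LL g$ (the derivative computation \eqref{LLP}); one also needs $\kakko(\LL^0g,\dey)=\kakko(\LL^0f,\dey)$. Only with this identity in hand does the strict inequality follow --- and then it is immediate, since the denominator of $\mathsf{T}$ strictly increases while the numerator is unchanged. Your substitute, that ``a minimizer sits at worst exactly at the saturation threshold'' as a consequence of \autoref{key property}(3),(4), is not a precise statement and does not follow from those items, which constrain the minimizer $f$, not the competitor $g$; likewise \eqref{iii-3-1 condition}--\eqref{iv-2 condition} describe $\LL^0f$, not $\LL^0g$. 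As written, the proposal is a correct plan whose central computation is missing.
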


\begin{proof}
For $x,y\in V$ and $f\in\Lipxy$, we consider the following sets:
\begin{align*}
\Px(f) &\:=
\Big\{v\in V\;\Big|\;\kakko(f,\dev)=\kakko(f,\dex),\;E_v=E_x\Big\}\quad(\ni x), \\
\Qy(f) &\:=
\Big\{v\in V\;\Big|\;\kakko(f,\dev)=\kakko(f,\dey),\;E_v=E_y\Big\}\quad(\ni y), \\
\Rx(f) &\:=
\Big\{v\in V\;\Big|\;\kakko(f,\dev)=\kakko(f,\dex),\;E_v\neq E_x\Big\}, \\
\Sy(f) &\:=
\Big\{v\in V\;\Big|\;\kakko(f,\dev)=\kakko(f,\dey),\;E_v\neq E_y\Big\}, \\
\VV(f) &\:=
V\big\backslash\big(\Px(f)\cup\Qy(f)\cup\Rx(f)\cup\Sy(f)\big).
\end{align*}
We show that if $\VV(f)\neq\emptyset$, then $f$ is not a minimizer of $\CC(x,y)$.
Assume that $\VV(f)\neq\emptyset$.
Then, we have $\LL^0f(v)=0$ for all $v\in\VV(f)$ by \autoref{key cor}.
Therefore, we observe
\begin{align*}
\|\LL^0f\|_{\Di}^2
&=
\sum_{v\in V}d_v\kakko(\LL^0f,\dev)^2 
=
\sum_{v\in\Px(f)\cup\Rx(f)}d_v\kakko(\LL^0f,\dev)^2+\sum_{v\in\Qy(f)\cup\Sy(f)}d_v\kakko(\LL^0f,\dev)^2 \\
&=
\left(\sum_{v\in\Px(f)\cup\Rx(f)}d_v\right)\kakko(\LL^0f,\dex)^2+\left(\sum_{v\in\Qy(f)\cup\Sy(f)}d_v\right)\kakko(\LL^0f,\dey)^2 &\Big(\text{by \autoref{key property}}\Big).
\end{align*}
Notice that by \autoref{grad lemma} and \autoref{key cor}, we have
\begin{equation*}
\sum_{v\in\Px(f)\cup\Rx(f)}\LL^0f(v)
=
w_{e_V}\max_{\mathsf{b}\in\mathsf{B}_{e_V}}\kakko(f,\mathsf{b})+w_e\max_{\mathsf{b}\in\mathsf{B}_e}\kakko(f,\mathsf{b})
=
w_{e_V}+w_e\max_{\mathsf{b}\in\mathsf{B}_e}\kakko(f,\mathsf{b}).
\end{equation*}
Hence, we have
\begin{equation*}
\dis \kakko(\LL^0f,\dex)
=
\frac{\dis\;w_{e_V}+w_e\max_{\mathsf{b}\in\mathsf{B}_e}\kakko(f,\mathsf{b})\;}{\dis\;\#\Px(f)\cdot d_x+\#\Rx(f)\cdot d^x\;},
\;\text{ where }\;
d^x
\:=
\left\{
\begin{aligned}
&\;w_{e_V} & (x\in e) , \vspace{1mm} \\
&\;w_{e_V}+w_e & (x\not\in e) . 
\end{aligned}
\right. 
\end{equation*}
Moreover, notice that the following hold:
\begin{itemize}
\item
$\dis \max_{\mathsf{b}\in\mathsf{B}_e}\kakko(f,\mathsf{b})=0$ if and only if $\dis \min_{v\in e}\kakko(f,\dev)=\max_{v\in e}\kakko(f,\dev)\in\big\{\kakko(f,\dey),\;\kakko(f,\dex)\big\}$.
\item
$\dis \max_{\mathsf{b}\in\mathsf{B}_e}\kakko(f,\mathsf{b})=1$ if and only if $\dis \min_{v\in e}\kakko(f,\dev)=\kakko(f,\dey)<\kakko(f,\dex)=\max_{v\in e}\kakko(f,\dev)$.
\end{itemize}

We define $g\in\Lipxy$ by
\begin{equation*}
\dis \kakko(g,\dev)
=
\left\{
\begin{aligned}
&\;\kakko(f,\dex) & (v\in\VV(f)) , \vspace{1mm} \\
&\;\kakko(f,\dev) & (v\not\in\VV(f)) . 
\end{aligned}
\right. 
\end{equation*}
By the definition of $g$, we have $\Px(g)\cup\Rx(g)=\Px(f)\cup\Rx(f)\cup\VV(f)$, $\Qy(g)=\Qy(f)$, $\Sy(g)=\Sy(f)$ and $\VV(g)=\emptyset$.
Moreover, we have $\max_{v\in e}\kakko(g,\dev)=\max_{v\in e}\kakko(f,\dev)$ and $\min_{v\in e}\kakko(g,\dev)=\min_{v\in e}\kakko(f,\dev)$.
Note that by the definition of $\LL^0$, $\kakko(\LL^0g,\dev)=\kakko(\LL^0f,\dev)$ holds for all $v\in\Qy(g)\cup\Sy(g)$.
In addition, by \autoref{key property}(1)(2), $\kakko(\LL^0g,\dev)$ is constant on $\Px(g)$ and $\Rx(g)$.
We denote these values by $\kakko(\LLP,\dex)$ and $\kakko(\LLR,\dex)$, respectively.
Then, since we observe
\begin{align}
\bullet\;\;
\sum_{v\in\Px(g)\cup\Rx(g)}\LL^0g(v)
&=
w_{e_V}+w_e\max_{\mathsf{b}\in\mathsf{B}_e}\kakko(g,\mathsf{b})
=
w_{e_V}+w_e\max_{\mathsf{b}\in\mathsf{B}_e}\kakko(f,\mathsf{b}), \label{constraint 1}
\\
\bullet\;\;
\sum_{v\in\Px(g)\cup\Rx(g)}\LL^0g(v)
&=
\sum_{v\in\Px(g)\cup\Rx(g)}d_v\kakko(\LL^0g,\dev)
=
\left(\sum_{v\in\Px(g)}d_v\right)\kakko(\LLP,\dex)+\left(\sum_{v\in\Rx(g)}d_v\right)\kakko(\LLR,\dex) \notag \\
&=
\#\Px(g)\cdot d_x\cdot\kakko(\LLP,\dex)+\#\Rx(g)\cdot d^x\cdot\kakko(\LLR,\dex),
\label{constraint 2}
\end{align}
we find
\begin{align*}
\|\LL^0g\|_{\Di}^2 
&=
\sum_{v\in\Px(g)\cup\Rx(g)}d_v\kakko(\LL^0g,\dev)^2+\sum_{v\in\Qy(g)\cup\Sy(g)}d_v\kakko(\LL^0g,\dev)^2 \\
&=
\left(\sum_{v\in\Px(g)}d_v\right)\kakko(\LLP,\dex)^2+\left(\sum_{v\in\Rx(g)}d_v\right)\kakko(\LLR,\dex)^2+\left(\sum_{v\in\Qy(f)\cup\Sy(f)}d_v\right)\kakko(\LL^0f,\dey)^2 \\
&=
\#\Px(g)\cdot d_x\cdot\kakko(\LLP,\dex)^2+\#\Rx(g)\cdot d^x\cdot\left\{\frac{\dis\;\Big(w_{e_V}+w_e\max_{\mathsf{b}\in\mathsf{B}_e}\kakko(f,\mathsf{b})\Big)-\#\Px(g)\cdot d_x\cdot\kakko(\LLP,\dex)\;}{\dis\#\Rx(g)\cdot d^x}\right\}^2 \\
&\;\quad 
\quad+\left(\sum_{v\in\Qy(f)\cup\Sy(f)}d_v\right)\kakko(\LL^0f,\dey)^2.
\end{align*}
Hence, we have
\begin{align}
\frac{\;\dd\|\LL^0g\|_{\Di}^2 \;}{\;\dd\kakko(\LLP,\dex)\;}
&=
2\#\Px(g)\cdot d_x\cdot\kakko(\LLP,\dex)-\frac{\dis\;2\#\Px(g)\cdot d_x\;}{\dis\;\#\Rx(g)\cdot d^x\;}\cdot\Bigg\{\bigg(w_{e_V}+w_e\max_{\mathsf{b}\in\mathsf{B}_e}\kakko(f,\mathsf{b})\bigg)-\#\Px(g)\cdot d_x\kakko(\LLP,\dex)\Bigg\} \notag \\
&=
2\#\Px(g)\cdot d_x\cdot\left\{\left(1+\frac{\dis\;\#\Px(g)\cdot d_x\;}{\dis\;\#\Rx(g)\cdot d^x\;}\right)\kakko(\LLP,\dex)-\frac{\dis\;w_{e_V}+w_e\max_{\mathsf{b}\in\mathsf{B}_e}\kakko(f,\mathsf{b})\;}{\dis\;\#\Rx(g)\cdot d^x\;}\right\} \notag \\
&=
\frac{\dis\;2\#\Px(g)\cdot d_x\cdot\Big(\#\Px(g)\cdot d_x+\#\Rx(g)\cdot d^x\Big)\;}{\dis\;\#\Rx(g)\cdot d^x\;}\cdot\left\{\kakko(\LLP,\dex)-\frac{\dis\;w_{e_V}+w_e\max_{\mathsf{b}\in\mathsf{B}_e}\kakko(f,\mathsf{b})\;}{\dis\;\#\Px(g)\cdot d_x+\#\Rx(g)\cdot d^x\;}\right\}. \label{LLP}
\end{align}

\textbf{\underline{Claim.}} 
\begin{equation*}
\mathsf{T}
\:=
\frac{\dis\;w_{e_V}+w_e\max_{\mathsf{b}\in\mathsf{B}_e}\kakko(g,\mathsf{b})\;}{\dis\;\#\Px(g)\cdot d_x+\#\Rx(g)\cdot d^x\;}
\in[0,1].
\end{equation*}

\textbf{\underline{Proof of Claim.}} 
By $\#\Px(g)\ge1$ since $x\in\Px(g)$, it is clear that if $d_x=w_{e_V}+w_e$, then $\mathsf{T}\in[0,1]$.
Assume that $d^x=w_{e_V}+w_e$.
If $\max_{\mathsf{b}\in\mathsf{B}_e}\kakko(g,\mathsf{b})=0$, then by $\#\Px(g)\ge1$, $\mathsf{T}\in[0,1]$.
If $\max_{\mathsf{b}\in\mathsf{B}_e}\kakko(g,\mathsf{b})=1$, then by the definition of $g$, $\Rx(g)\ge1$, thus, $\mathsf{T}\in[0,1]$.
$\blacksquare$
\vspace{2mm}

By \eqref{LLP}, \textbf{Claim} and the definition of $\LL^0g$, we find $\kakko(\LLP,\dex)=\mathsf{T}$.
Moreover, by \eqref{constraint 1} and \eqref{constraint 2}, we also find $\kakko(\LLR,\dex)=\mathsf{T}$.
Hence, we have $\kakko(\LL^0g,\dex)=\mathsf{T}$ and $\kakko(\LL^0g,\dey)=\kakko(\LL^0f,\dey)$.
Here, by $\VV(f)\neq\emptyset$, $\#\Px(g)>\#\Px(f)$ or $\#\Rx(g)>\#\Rx(g)$ holds.
Therefore, we obtain
\begin{align*}
\kakko(\LL^0g,\dex-\dey)
=
\mathsf{T}-\kakko(\LL^0f,\dey)
&=
\frac{\dis\;w_{e_V}+w_e\max_{\mathsf{b}\in\mathsf{B}_e}\kakko(f,\mathsf{b})\;}{\dis\;\#\Px(g)\cdot d_x+\#\Rx(g)\cdot d^x\;}-\kakko(\LL^0f,\dey) \\
&<
\frac{\dis\;w_{e_V}+w_e\max_{\mathsf{b}\in\mathsf{B}_e}\kakko(f,\mathsf{b})\;}{\dis\;\#\Px(f)\cdot d_x+\#\Rx(f)\cdot d^x\;}-\kakko(\LL^0f,\dey)
=
\kakko(\LL^0f,\dex-\dey).
\end{align*}
\end{proof}

\begin{cor} \label{flow of C}
Let $E=\{e_V,e\}$, where $e_V$ is a hyperedge including all vertices, and $x,y\in V$.
Then, we have
\begin{equation*}
\CC(x,y)
=
\min_{f\in\LIPxy}\kakko(\LL^0f,\dex-\dey), 
\end{equation*}
where
\begin{equation*}
\LIPxy
\:=
\Big\{f\in\Lipxy\;\Big|\;\kakko(f,\delta_v)=\kakko(f,\dex)\mathrm{\;or\;}\kakko(f,\delta_v)=\kakko(f,\dey)\mathrm{\;holds\;for\;all\;}v\in V\Big\}.
\end{equation*}
\end{cor}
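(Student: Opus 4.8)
The plan is to read off \autoref{flow of C} directly from \autoref{main theorem 2}; almost all the work is already contained there, so what remains is a short reduction plus the (standard) existence of a minimizer.

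First I would reduce to the case $d(x,y)=1$. Since $e_V$ contains every vertex, $x\sim y$ whenever $x\neq y$, hence $d(x,y)=1$ and therefore $\CC(x,y)=\inf_{f\in\Lipxy}\kakko(\LL^0f,\dex-\dey)$ with no prefactor. As $\LIPxy\subseteq\Lipxy$ by definition, this gives immediately
\begin{equation*}
\min_{f\in\LIPxy}\kakko(\LL^0f,\dex-\dey)\ \ge\ \CC(x,y).
\end{equation*}
For the reverse inequality I would take a minimizer $f^\ast\in\Lipxy$ of $\CC(x,y)$ and apply \autoref{main theorem 2}: it says precisely that $\kakko(f^\ast,\dev)\in\{\kakko(f^\ast,\dex),\kakko(f^\ast,\dey)\}$ for every $v\in V$, i.e.\ $f^\ast\in\LIPxy$. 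Hence
\begin{equation*}
\min_{f\in\LIPxy}\kakko(\LL^0f,\dex-\dey)\ \le\ \kakko(\LL^0f^\ast,\dex-\dey)\ =\ \CC(x,y),
\end{equation*}
so equality holds and the infimum over $\LIPxy$ is attained at $f^\ast$, which is what legitimizes writing $\min$ rather than $\inf$.

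The one step needing a separate argument is the existence of a minimizer $f^\ast$ of $\CC(x,y)$ — the same kind of minimizer already invoked in the proof of \autoref{K < C}. Here I would use that $\kakko(\LL^0f,\dex-\dey)$ is invariant under the translation $f\mapsto f+(c\,d_v)_{v\in V}$, which shifts every $\kakko(f,\dev)$ by the constant $c$ and leaves $\mathcal{E}$, hence $\LL^0$, unchanged; this lets one restrict the infimum defining $\CC(x,y)$ to the compact slice $\{f\in\Lipxy\mid \kakko(f,\dey)=0\}$, on which $f\mapsto\kakko(\LL^0f,\dex-\dey)$ is lower semicontinuous, so a minimizer exists. (As a cross-check, the same invariance shows that $\LIPxy$ is, up to this translation, a finite set — one function for each subset $A\subseteq V$ with $x\in A\not\ni y$ — so attainment on the $\LIPxy$ side is automatic as well.) I do not anticipate any real obstacle: once \autoref{main theorem 2} is in hand, \autoref{flow of C} is essentially bookkeeping, the only care needed being the existence of the minimizer $f^\ast$.
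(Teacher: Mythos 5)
Your argument is correct and is exactly how the paper obtains this statement: \autoref{flow of C} is stated without proof as an immediate consequence of \autoref{main theorem 2}, via precisely the two inequalities you give (the trivial one from $\LIPxy\subseteq\Lipxy$, and the reverse one by noting that a minimizer of $\CC(x,y)$ must lie in $\LIPxy$). Your extra discussion of why a minimizer of $\CC(x,y)$ exists goes beyond what the paper records (it simply invokes ``a minimizer'' already in the proof of \autoref{K < C}), and is a reasonable supplement rather than a deviation.
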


For simplicity in the following discussion, we define as follows.

\begin{defi}[Flow]
For given $f\in\rv$, we call the steepest gradient $\flow(e)\in\mathrm{arg\,max}_{\mathsf{b}\in\mathsf{B_e}}\kakko(f,\mathsf{b})$ on each hyperedge $e$ the \emph{flow} on $e$ induced by $f$.
\end{defi}

\begin{nota}
In the following, we represent the flow of the steepest gradient $\flow(e)$ by the symbol of an arrow from $\posiflow(e)$ to $\negaflow(e)$.
For instance, in case we set $\flow(e)=(\dex+\dey)/2-\delta_v$ in $e=\{x,y,z,v\}$, then this flow is represented by
\begin{equation*}
\{x,y\} \quad \to \quad \{v\}.
\end{equation*}
\end{nota}

\subsection{$1$-regular hypergraphs} \label{1-regular subsection}

In this subsection, we calculate $\CC(R_{n,1})$.

\begin{exa} \label{1-regular ex}
\begin{equation}
\CC(R_{n,1})
= 
\frac{n}{\dis \;\upgauss(\frac{n}{2})\ungauss(\frac{n}{2})\;}. \label{1-regular kxy}
\end{equation}
\end{exa}

\begin{proof}
Let $V=\{x,y,v_1,\ldots\,,v_{n-2}\}$ be the vertex set of $R_{n,1}$.
Notice that since \autoref{key property} and \autoref{key cor} also hold for hypergraphs with $e=e_V$, in particular, $1$-regular hypergraphs, \autoref{flow of C} also holds for $1$-regular hypergraphs.

Suppose that a flow of $f\in\LIPxy$ on $e_V$ is represented by 
\begin{equation*}
\{x,\;\underbrace{\,\ldots\,,\;v_i,\;\ldots\,}_{I\text{ times}}\}
\quad \to \quad 
\{y,\;\underbrace{\,\ldots\,,\;v_j,\;\ldots\,}_{J\text{ times}}\}, 
\end{equation*}
where $I,J\in\N_0$ satisfying $I+J=n-2$.
Then, by \eqref{L0 formula} and \autoref{key property}, we have
\begin{equation*}
\left\{
\begin{aligned}
&\;\dis \LL^0f(x)+I\LL^0f(v_i)\;=\;w_{e_V} , \vspace{1mm} \\
&\;\dis \LL^0f(y)+J\LL^0f(v_j)=-w_{e_V},
\end{aligned}
\right. 
\qquad\text{ and }\qquad
\left\{
\begin{aligned}
&\;\dis \kakko(\LL^0f,\dex)=\frac{1}{\;I+1\;} , \vspace{1mm} \\
&\;\dis \kakko(\LL^0f,\dey)=-\frac{1}{\;J+1\;}.
\end{aligned}
\right. 
\end{equation*}
Thus, we find
\begin{equation*}
\CC(x,y)
=
\min_{f\in\LIPxy}\kakko(\LL^0f,\dex-\dey)
=
\min_{\substack{I,J\in\N_0 \\ I+J=n-2}}\left\{\frac{1}{I+1}+\frac{1}{J+1}\right\}
=
\frac{n}{\;\dis\upgauss(\frac{n}{2})\ungauss(\frac{n}{2})\;}.
\end{equation*}
\end{proof}

\begin{rem} \label{relative heat}
By \autoref{1-regular ex}, $\K(R_{2,1})=2$ holds.
Moreover, since $R_{2,1}$ is a (weighted) complete graph $K_2$, $\K_{\mathrm{LLY}}(K_2)=\K_\mathrm{IKTU}(R_{2,1})$ holds by \cite[Proposition 4.1]{IKTU}.
Therefore, we obtain $\K(R_{2,1})=\K_{\mathrm{LLY}}(K_2)=\K_\mathrm{IKTU}(R_{2,1})=2$ by \autoref{example of graph}. 
In addition, note that 
the (hyper)edge weight do not affect these curvatures in the case $\#E=1$.
\end{rem}

\begin{proof}[\underline{Proof of \autoref{1-regular limit}}]
\hspace{0mm}
\begin{itemize}
\item[$(1)$] 
By the previous discussion, we obtain
\begin{equation*}
\kIKTU(R_{n,1})
\stackrel{\text{\eqref{kIKTU < k}}}{\le}
\K(R_{n,1})
\stackrel{\text{\eqref{C>Kxy}}}{\le}
\CC(R_{n,1})
\stackrel{\text{\eqref{1-regular kxy}}}{=}
\frac{n}{\dis \;\upgauss(\frac{n}{2})\ungauss(\frac{n}{2})\;}
\xrightarrow{\;n\to\infty\;}
0.
\end{equation*}
\item[$(2)$] 
This follows from \autoref{main theorem} and $(1)$. 
\end{itemize}
\end{proof}

\subsection{Hypergraphs satisfying $e_V\in E$ with $2$ hyperedges} \label{|E|=2 subsection}

In this subsection, we list the calculation results of $\CC(x,y)$ of hypergraphs obtained by adding one hyperedge to a $1$-regular hypergraph.
We denote the hyperedge that includes all vertices by $e_V$, and the other hyperedge by $e$.
According to the relationship between the added hyperedge $e$ and $x,y\in V$, we calculate $\CC(x,y)$ in the three types of hypergraphs \autoref{V=n, E=XYZ}, \autoref{V=n, E=XZZ} and \autoref{V=n, E=ZZ}.
Here, we set $A,B\in\N_0$ such that $A+B\ge1$, $V=\{x,y,p_1,\ldots,p_A,q_1,\ldots,q_B\}$ with $n=A+B+2\,(\ge3)$ and $A\ge1$ in \autoref{V=n, E=ZZ}.

\vspace{-10mm}
\begin{figure}[H]
\begin{tabular}{ccccc} 
\begin{minipage}{0.3\hsize}
\centering
\begin{tikzpicture}
\node (X) at (0.25,0.7) {$x$};
\node (Y) at (1.75,0.7) {$y$};
\node (z) at (0.25,0) {$p_1$};
\node (z) at (1,0) {$\cdots$};
\node (z) at (1.75,0) {$p_A$};
\node (w) at (0,-1) {$q_1$};
\node (w) at (1,-1) {$\cdots$};
\node (w) at (2,-1) {$q_B$};
\draw[ultra thick] (-0.5,-1.3)--(-0.5,1.2);
\draw[ultra thick] (2.5,-1.3)--(2.5,1.2);
\draw[ultra thick] (-0.5,1.2)--(2.5,1.2);
\draw[ultra thick] (-0.5,-1.3)--(2.5,-1.3);
\draw[red, ultra thick] (-0.1,-0.3)--(-0.1,1);
\draw[red, ultra thick] (2.1,-0.3)--(2.1,1);
\draw[red, ultra thick] (-0.1,-0.3)--(2.1,-0.3);
\draw[red, ultra thick] (-0.1,1)--(2.1,1);
\node (empty) at (1,2.5) {};
\end{tikzpicture}
\caption{$e=\{x,y,p_1,\ldots,p_A\}$.} \label{V=n, E=XYZ}
\end{minipage}
\begin{minipage}{0.05\hsize}
\end{minipage}
\begin{minipage}{0.3\hsize}
\centering
\begin{tikzpicture}
\node (X) at (0.25,0.7) {$x$};
\node (Y) at (1.75,0.7) {$y$};
\node (z) at (0.25,0) {$p_1$};
\node (z) at (1,0) {$\cdots$};
\node (z) at (1.75,0) {$p_A$};
\node (w) at (0,-1) {$q_1$};
\node (w) at (1,-1) {$\cdots$};
\node (w) at (2,-1) {$q_B$};
\draw[ultra thick] (-0.5,-1.3)--(-0.5,1.2);
\draw[ultra thick] (2.5,-1.3)--(2.5,1.2);
\draw[ultra thick] (-0.5,1.2)--(2.5,1.2);
\draw[ultra thick] (-0.5,-1.3)--(2.5,-1.3);
\draw[red, ultra thick] (-0.1,-0.3)--(-0.1,1);
\draw[red, ultra thick] (2.1,-0.3)--(2.1,0.2);
\draw[red, ultra thick] (0.5,1)--(2.1,0.2);
\draw[red, ultra thick] (-0.1,-0.3)--(2.1,-0.3);
\draw[red, ultra thick] (-0.1,1)--(0.5,1);
\node (empty) at (1,2.5) {};
\end{tikzpicture}
\caption{$e=\{x,p_1,\ldots,p_A\}$.} \label{V=n, E=XZZ}
\end{minipage}
\begin{minipage}{0.05\hsize}
\end{minipage}
\begin{minipage}{0.3\hsize}
\centering
\begin{tikzpicture}
\node (X) at (0.25,0.7) {$x$};
\node (Y) at (1.75,0.7) {$y$};
\node (z) at (0.25,0) {$p_1$};
\node (z) at (1,0) {$\cdots$};
\node (z) at (1.75,0) {$p_A$};
\node (w) at (0,-1) {$q_1$};
\node (w) at (1,-1) {$\cdots$};
\node (w) at (2,-1) {$q_B$};
\draw[ultra thick] (-0.5,-1.3)--(-0.5,1.2);
\draw[ultra thick] (2.5,-1.3)--(2.5,1.2);
\draw[ultra thick] (-0.5,1.2)--(2.5,1.2);
\draw[ultra thick] (-0.5,-1.3)--(2.5,-1.3);
\draw[red, ultra thick] (-0.1,-0.3)--(-0.1,0.3);
\draw[red, ultra thick] (2.1,-0.3)--(2.1,0.3);
\draw[red, ultra thick] (-0.1,-0.3)--(2.1,-0.3);
\draw[red, ultra thick] (-0.1,0.3)--(2.1,0.3);
\node (empty) at (1,2.5) {};
\end{tikzpicture}
\caption{$e=\{p_1,\ldots,p_A\}$.} \label{V=n, E=ZZ}
\end{minipage}
\end{tabular}
\end{figure}

\begin{exa} \label{main example}
$\CC(x,y)$ in each of the hypergraphs in \autoref{V=n, E=XYZ}, \autoref{V=n, E=XZZ} and \autoref{V=n, E=ZZ} is as follows.
\begin{itemize}
\item \autoref{V=n, E=XYZ}:
\begin{equation}
\CC(x,y)
=
(w_{e_V}+w_e)\cdot
\frac{\vol(H)}{\;\dis\max_{\substack{I,J\in\N,\,K,L\in\N_0 \\ I+J=A+2 \\ K+L=B}}\Big\{(w_{e_V}+w_e)I+w_{e_V}K\Big\}\Big\{(w_{e_V}+w_e)J+w_{e_V}L\Big\}\;}. \label{xyp kxy weighted}
\end{equation}
\item \autoref{V=n, E=XZZ}:
\begin{equation}
\CC(x,y)
=
\min\left\{
\begin{aligned}
&\;\dis \frac{\vol(H)}{\;\dis\max_{\substack{K,L\in\N_0 \\ K+L=B}}\Big\{(w_{e_V}+w_e)(A+1)+w_{e_V}K\Big\}(L+1)\;} {\Large \textbf{,}}\; \vspace{2mm} \\ 
&\;\dis (w_{e_V}+w_e)\cdot
\frac{\vol(H)}{\;\dis\max_{\substack{I,J\in\N,\,K,L\in\N_0 \\ I+J=A+1 \\ K+L=B}}\dis\Big\{(w_{e_V}+w_e)I+w_{e_V}K\Big\}\Big\{(w_{e_V}+w_e)J+w_{e_V}(L+1)\Big\}\;}
\end{aligned}
\right\}. \label{xpp kxy weighted}
\end{equation}
\item \autoref{V=n, E=ZZ}:
\begin{equation}
\CC(x,y)
=
\min\left\{
\begin{aligned}
&\;\dis \frac{\vol(H)}{\;\dis\max_{\substack{K,L\in\N_0 \\ K+L=B}}\Big\{(w_{e_V}+w_e)A+w_{e_V}(K+1)\Big\}(L+1)\;} {\Large \textbf{,}}\; \vspace{2mm} \\ 
&\;\dis (w_{e_V}+w_e)\cdot
\frac{\vol(H)}{\;\dis\max_{\substack{I,J\in\N,\,K,L\in\N_0 \\ I+J=A \\ K+L=B}}\Big\{(w_{e_V}+w_e)I+w_{e_V}(K+1)\Big\}\Big\{(w_{e_V}+w_e)J+w_{e_V}(L+1)\Big\}\;}\; 
\end{aligned}
\right\}. \label{pp kxy weighted}
\end{equation}
\end{itemize}
\end{exa}

The detailed computation of \autoref{main example} is deferred to \autoref{calculation section}.

\begin{rem}
In each of the following situations, we can see that $\CC(x,y)$ in \autoref{main example} is reduced to that in \autoref{1-regular ex}.
We denote a hypergraph with $n$ vertices having two different hyperedges that contain all vertices by $R_{n,2}$.
\begin{itemize}
\item 
In \autoref{V=n, E=XYZ}, set $B=0$. 
Then, this hypergraph is $R_{{A+2},2}$ having multi-hyperedges with weights $w_{e_V}$ and $w_e$.
Substituting $B=0$ into \eqref{xyp kxy weighted} yields 
\begin{equation*}
\CC(x,y)
=
(w_{e_V}+w_e)\cdot
\frac{(w_{e_V}+w_e)(A+2)}{\;\dis\max_{\substack{I,J\in\N \\ I+J=A+2}}(w_{e_V}+w_e)I\cdot(w_{e_V}+w_e)J\;}
=
\frac{A+2}{\;\dis\upgauss(\frac{A+2}{2})\ungauss(\frac{A+2}{2})\;}.
\end{equation*}
This coincides with $\CC(x,y)$ of $R_{{A+2},1}$ having a single hyperedge with weight $w_{e_V}+w_e$.
\item 
In \autoref{V=n, E=XYZ}, when the weight $w_{e_V}$ approaches $0$, in the limit we have $R_{{A+2},1}$ having a hyperedge $e$ with weight $w_e$.
Hence, we expect $\lim_{w_{e_V}\dto0}\CC(x,y)=\CC(R_{{A+2},1})$, and it is indeed the case by \eqref{xyp kxy weighted}:
\begin{equation*}
\lim_{w_{e_V}\dto0}\CC(x,y)
=
w_e\cdot
\frac{\vol(H)}{\;\dis\max_{\substack{I,J\in\N \\ I+J=A+2}}w_eI\cdot w_eJ\;}
=
\frac{A+2}{\;\dis\upgauss(\frac{A+2}{2})\ungauss(\frac{A+2}{2})\;}.
\end{equation*}
\item 
In \autoref{V=n, E=ZZ}, set $A=0$. 
Then this hypergraph is $R_{{B+2},1}$ having a single hyperedge with weight $w_{e_V}$.
Substituting $A=0$ and $w_e=0$ into \eqref{pp kxy weighted} yields 
\begin{equation*}
\CC(x,y)
=
\min\left\{
\begin{aligned}
\;\dis \frac{w_{e_V}(B+2)}{\;\dis\max_{\substack{K,L\in\N_0 \\ K+L=B}}w_{e_V}(K+1)(L+1)\;} 
{\Large \textbf{,}}\;\; 
w_{e_V}\cdot
\frac{w_{e_V}(B+2)}{\;\dis\max_{\substack{K,L\in\N_0 \\ K+L=B}}w_{e_V}(K+1)\cdot w_{e_V}(L+1)\;}\; 
\end{aligned}
\right\} 
=
\frac{B+2}{\;\dis\upgauss(\frac{B+2}{2})\ungauss(\frac{B+2}{2})\;}.
\end{equation*}
\end{itemize}
\end{rem}

\begin{cor}
For hypergraphs $H_1=\big(\{x,y,z\},\,\{xyz\}\big)$ (\autoref{E=1}) and $H_2=\big(\{x,y,z\},\,\{xyz,xy\}\big)$ (\autoref{E=2}), we have
\begin{equation*}
\CC(x,y)
=
\dis\frac{\vol(H)}{\;\max\{d_x,d_y\}+d_z\;}
,\quad
\CC(y,z)
=
\dis\frac{\vol(H)}{\;\max\{d_y,d_z\}+d_x\;}.
\end{equation*}
In particular, if the hypergraphs $H_1$ and $H_2$ are unweighted, then we have
\begin{equation*}
\mathrm{\autoref{E=1}:}\;\;
\CC(x,y)=\CC(y,z)=\frac{3}{2}
,\qquad
\mathrm{\autoref{E=2}:}\;\;
\CC(x,y)=\frac{5}{3}
\quad\mathrm{ and }\quad
\CC(y,z)=\frac{5}{4}.
\end{equation*}
\end{cor}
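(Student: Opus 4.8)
The plan is to derive both quantities from the already-proved formulas of \autoref{1-regular ex} and \autoref{main example}, by identifying each of the two hypergraphs, together with the relevant pair of vertices, with the correct member of those parametrized families. Once the identification is fixed, each case reduces to evaluating a maximum (or a minimum of two numbers) over a one- or two-element index set, followed by a short rewrite of the resulting weight expression in terms of the weighted degrees $d_x,d_y,d_z$.

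First I would treat $H_1=(\{x,y,z\},\{xyz\})$. This is exactly the $1$-regular hypergraph $R_{3,1}$, so \autoref{1-regular ex} with $n=3$ gives $\CC(R_{3,1})=3/2$, which by the convention for $\CC(\cdot)$ means $\CC(x,y)=\CC(y,z)=3/2$; the independence of the hyperedge weight was already recorded in \autoref{relative heat}. Writing $w$ for that weight, one has $d_x=d_y=d_z=w$, hence $\vol(H_1)=3w$ and $\max\{d_x,d_y\}+d_z=\max\{d_y,d_z\}+d_x=2w$, so the displayed formula becomes $3w/2w=3/2$, as required; setting $w=1$ recovers the unweighted value.

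For $H_2=(\{x,y,z\},\{xyz,xy\})$ with $e_V=\{x,y,z\}$ and $e=\{x,y\}$, I would handle the two pairs separately. For $(x,y)$, both endpoints lie in $e$, so $H_2$ is the hypergraph of \autoref{V=n, E=XYZ} with $A=0$ and $B=1$ (the unique vertex outside $e$ being $z$); substituting into \eqref{xyp kxy weighted}, the maximization collapses ($I=J=1$, and both admissible choices of $(K,L)$ give the same product $(2w_{e_V}+w_e)(w_{e_V}+w_e)$), which yields $\CC(x,y)=\vol(H_2)/(2w_{e_V}+w_e)$. For $(y,z)$, we have $y\in e$ but $z\notin e$, so after relabeling ($y$ as the endpoint lying in $e$, $z$ as the endpoint outside $e$, and $x$ as the single $p$-type vertex) $H_2$ becomes the hypergraph of \autoref{V=n, E=XZZ} with $A=1$ and $B=0$; \eqref{xpp kxy weighted} then gives the minimum of $\vol(H_2)/(2(w_{e_V}+w_e))$ and $\vol(H_2)/(2w_{e_V}+w_e)$, and since $w_e>0$ the former is the smaller, so $\CC(y,z)=\vol(H_2)/(2(w_{e_V}+w_e))$. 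It then remains to record that here $d_x=d_y=w_{e_V}+w_e$ and $d_z=w_{e_V}$, which turn $2w_{e_V}+w_e$ into $\max\{d_x,d_y\}+d_z$ and $2(w_{e_V}+w_e)$ into $\max\{d_y,d_z\}+d_x$; finally setting $w_{e_V}=w_e=1$ (so $\vol(H_2)=5$) gives $\CC(x,y)=5/3$ and $\CC(y,z)=5/4$.

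Everything here is routine computation, and I expect no genuine obstacle. The only point that needs care is the bookkeeping of the dictionary $x,y,z\leftrightarrow x,y,p_i,q_j$ between the statement and the families of \autoref{main example}: one must notice that for the pair $(y,z)$ the roles of the two endpoints get swapped relative to the pair $(x,y)$, so that the case \autoref{V=n, E=XZZ} — and not \autoref{V=n, E=XYZ} — is the one to invoke, and one must correctly simplify the small maxima and the minimum of two candidates appearing in the corresponding formulas.
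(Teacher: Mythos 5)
Your proposal is correct and follows essentially the route the paper intends: the corollary is obtained by specializing \autoref{1-regular ex} (for $H_1$) and the formulas \eqref{xyp kxy weighted}, \eqref{xpp kxy weighted} of \autoref{main example} (for $H_2$, with the relabeling you describe for the pair $(y,z)$), then rewriting the resulting weight expressions via $d_x=d_y=w_{e_V}+w_e$, $d_z=w_{e_V}$. All the small maxima, the comparison of the two candidates in \eqref{xpp kxy weighted}, and the final unweighted values $3/2$, $5/3$, $5/4$ check out.
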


\vspace{-8mm}
\begin{figure}[H]
\begin{tabular}{ccc}
\begin{minipage}{0.47\hsize}
\begin{center}
\begin{tikzpicture}
\node (v) at (1,2) {};
\node (X) at (1,1) {$x$};
\node (Y) at (0,0) {$y$};
\node (Z) at (2,0) {$z$};
\draw[ultra thick] (-0.5,-0.5)--(-0.5,1.5);
\draw[ultra thick] (2.5,-0.5)--(2.5,1.5);
\draw[ultra thick] (-0.5,1.5)--(2.5,1.5);
\draw[ultra thick] (-0.5,-0.5)--(2.5,-0.5);
\end{tikzpicture}
\caption{A hypergraph $H_1$.} \label{E=1}
\end{center}
\end{minipage}
\begin{minipage}{0.05\hsize}
\begin{center}
\end{center}
\end{minipage}
\begin{minipage}{0.47\hsize}
\begin{center}
\begin{tikzpicture}
\node (v) at (1,2) {};
\node (X) at (1,1) {$x$};
\node (Y) at (0,0) {$y$};
\node (Z) at (2,0) {$z$};
\draw[ultra thick] (-0.5,-0.5)--(-0.5,1.5);
\draw[ultra thick] (2.5,-0.5)--(2.5,1.5);
\draw[ultra thick] (-0.5,1.5)--(2.5,1.5);
\draw[ultra thick] (-0.5,-0.5)--(2.5,-0.5);
\draw[red, ultra thick] (0.2,0.2)--(0.8,0.8); 
\end{tikzpicture}
\caption{A hypergraph $H_2$.} \label{E=2}
\end{center}
\end{minipage}
\end{tabular}
\end{figure}

We can represent the formulae \eqref{xyp kxy weighted}, \eqref{xpp kxy weighted} and \eqref{pp kxy weighted} by the follwing single equation:
\begin{equation*}
\CC(x,y)
=
\min_{f\in\LIPxy}
\left\{\sum_{e\in E}
w_e\left(\max_{\mathsf{b}\in\mathsf{B}_e}\kakko(f,\mathsf{b})\right)\cdot
\frac{\vol(H)}{\;\dis\max_{\flow(e)\in\mathsf{B}_e}\left(\sum_{v^+\in\posiflow(e)}d_{v^+}\right)\left(\sum_{v^-\in\negaflow(e)}d_{v^-}\right)\;}\right\},
\end{equation*}
where by \autoref{key cor}, we have
\begin{equation*}
\max_{\mathsf{b}\in\mathsf{B}_{e_V}}\kakko(f,\mathsf{b})=1,
\quad
\max_{\mathsf{b}\in\mathsf{B}_e}\kakko(f,\mathsf{b})\in\{0,1\}.
\end{equation*}

\section{Appendix: Concrete computations of $\CC(x,y)$} \label{calculation section}

In this section, we calculate $\CC(x,y)$ in each hypergraph of \autoref{V=n, E=XYZ}, \autoref{V=n, E=XZZ} and \autoref{V=n, E=ZZ}.
Hereafter, we suppose that $I,J,K,L\in\N_0$, $I+J=A$ and $K+L=B$ unless otherwise noted.

\subsection{A hypergraph in \autoref{V=n, E=XYZ}} \label{appendix: V=n, E=XYZ}

Consider a flow on $e_V$ induced from some $f\in\LIPxy$ represented by
\begin{equation}
\{x,\;\underbrace{\,\ldots\,,\;p_i,\;\ldots\,}_{I\text{ times}} ,\;\underbrace{\,\ldots\,,\;q_k,\;\ldots\,}_{K\text{ times}}\}
\quad \to \quad 
\{y,\;\underbrace{\,\ldots\,,\;p_j,\;\ldots\,}_{J\text{ times}} ,\;\underbrace{\,\ldots\,,\;q_\ell,\;\ldots\,}_{L\text{ times}}\}.
\label{flow}
\end{equation}
Then, the associated flow on $e$ is represented by
\begin{align*}
\{x,\;\underbrace{\,\ldots\,,\;p_i,\;\ldots\,}_{I\text{ times}}\}
\quad \to \quad 
\{y,\;\underbrace{\,\ldots\,,\;p_j,\;\ldots\,}_{J\text{ times}}\}.
\end{align*}
By \eqref{L0 formula} and \autoref{key property}, we have
\begin{align*}
\left\{
\begin{aligned}
&\;\dis \LL^0f(x)+I\LL^0f(p_i)+K\LL^0f(q_k)\;=\;w_{e_V}+w_e , \vspace{1mm} \\
&\;\dis \LL^0f(y)+J\LL^0f(p_j)+L\LL^0f(q_\ell)=-(w_{e_V}+w_e),
\end{aligned}
\right. 
\qquad\text{ and }\qquad
\left\{
\begin{aligned}
&\;\dis \kakko(\LL^0f,\dex)=\frac{w_{e_V}+w_e}{\;(w_{e_V}+w_e)(I+1)+w_{e_V}K\;}, \vspace{1mm} \\
&\;\dis \kakko(\LL^0f,\dey)=-\frac{w_{e_V}+w_e}{\;(w_{e_V}+w_e)(J+1)+w_{e_V}L\;}.
\end{aligned}
\right. 
\end{align*}
Thus, by \autoref{flow of C}, we obtain \eqref{xyp kxy weighted}.
Indeed, 
\begin{align*}
\CC(x,y)
&=
(w_{e_V}+w_e)\cdot\min_{\substack{I,J,K,L\in\N_0 \\ I+J=A \\ K+L=B}}\left\{\frac{1}{\;(w_{e_V}+w_e)(I+1)+w_{e_V}K\;}+\frac{1}{\;(w_{e_V}+w_e)(J+1)+w_{e_V}L\;}\right\} \\
&=
(w_{e_V}+w_e)\cdot\min_{\substack{I,J,K,L\in\N_0 \\ I+J=A \\ K+L=B}}
\frac{(w_{e_V}+w_e)(A+2)+w_{e_V}B}{\;\dis\Big\{(w_{e_V}+w_e)(I+1)+w_{e_V}K\Big\}\Big\{(w_{e_V}+w_e)(J+1)+w_{e_V}L\Big\}\;} \\
&=
(w_{e_V}+w_e)\cdot
\frac{\vol(H)}{\;\dis\max_{\substack{I,J\in\N,\,K,L\in\N_0 \\ I+J=A+2 \\ K+L=B}}\Big\{(w_{e_V}+w_e)I+w_{e_V}K\Big\}\Big\{(w_{e_V}+w_e)J+w_{e_V}L\Big\}\;}.
\end{align*}

\subsection{A hypergraph in \autoref{V=n, E=XZZ}} \label{subsub calculation E=XZZ}

Consider a flow of $f\in\LIPxy$ on $e_V$ represented by \eqref{flow}.
By \autoref{key cor}, it is sufficient to consider the following cases:
\begin{itemize}
\item[\textbf{(A)}] $\max_{\mathsf{b}\in\mathsf{B}_e}\kakko(f,\mathsf{b})=0$.
\item[\textbf{(B)}] $\max_{\mathsf{b}\in\mathsf{B}_e}\kakko(f,\mathsf{b})=1$.
\end{itemize}

\vspace{1mm}
\textbf{(A)} \underline{The case $\max_{\mathsf{b}\in\mathsf{B}_e}\kakko(f,\mathsf{b})=0$}: 
Since there is no flow on $e$ associated with the above flow, we find $(I,J)=(A,0)$.
Hence, by \eqref{L0 formula} and \autoref{key property}, we have
\begin{align*}
\left\{
\begin{aligned}
&\;\dis \LL^0f(x)+A\LL^0f(p_i)+K\LL^0f(q_k)\;=\;w_{e_V} , \vspace{1mm} \\
&\;\dis \LL^0f(y)+L\LL^0f(q_\ell)=-w_{e_V},
\end{aligned}
\right. 
\qquad\text{ and }\qquad
\left\{
\begin{aligned}
&\;\dis \kakko(\LL^0f,\dex)=\frac{w_{e_V}}{\;(w_{e_V}+w_e)(A+1)+w_{e_V}K\;} , \vspace{1mm} \\
&\;\dis \kakko(\LL^0f,\dey)=-\frac{1}{\;L+1\;}.
\end{aligned}
\right. 
\end{align*}
Thus, we obtain
\begin{align}
\min_{\substack{K,L\in\N_0 \\ K+L=B}}
\left\{\frac{w_{e_V}}{\;(w_{e_V}+w_e)(A+1)+w_{e_V}K\;}+\frac{1}{\;L+1\;}\right\} 
&=
\min_{\substack{K,L\in\N_0 \\ K+L=B}}
\frac{\;(w_{e_V}+w_e)(A+1)+w_{e_V}(B+1)\;}{\;\dis\Big\{(w_{e_V}+w_e)(A+1)+w_{e_V}K\Big\}(L+1)\;} \notag \\
&=
\frac{\vol(H)}{\;\dis\max_{\substack{K,L\in\N_0 \\ K+L=B}}\Big\{(w_{e_V}+w_e)(A+1)+w_{e_V}K\Big\}(L+1)\;}. \label{Fig2 1}
\end{align}

\vspace{1mm}
\textbf{(B)} \underline{The case $\max_{\mathsf{b}\in\mathsf{B}_e}\kakko(f,\mathsf{b})=1$}: 
The flow on $e$ associated with the flow \eqref{flow} is represented by
\begin{equation*}
\{x,\;\underbrace{\,\ldots\,,\;p_i,\;\ldots\,}_{I\text{ times}}\}
\quad \to \quad 
\{\underbrace{\,\ldots\,,\;p_j,\;\ldots\,}_{J\text{ times}}\},
\end{equation*}
where $J\ge1$ and $A\ge1$.
Hence, by \eqref{L0 formula} and \autoref{key property}, we have
\begin{align*}
\left\{
\begin{aligned}
&\;\dis \LL^0f(x)+I\LL^0f(p_i)+K\LL^0f(q_k)\;=\;w_{e_V}+w_e , \vspace{1mm} \\
&\;\dis \LL^0f(y)+J\LL^0f(p_j)+L\LL^0f(q_\ell)=-(w_{e_V}+w_e),
\end{aligned}
\right. 
\qquad\text{ and }\qquad
\left\{
\begin{aligned}
&\;\dis \kakko(\LL^0f,\dex)=\frac{w_{e_V}+w_e}{\;(w_{e_V}+w_e)(I+1)+w_{e_V}K\;}, \vspace{1mm} \\
&\;\dis \kakko(\LL^0f,\dey)=-\frac{w_{e_V}+w_e}{\;(w_{e_V}+w_e)J+w_{e_V}(L+1)\;}.
\end{aligned}
\right. 
\end{align*}
Thus, we obtain
\begin{align}
&\;\quad 
(w_{e_V}+w_e)\cdot\min_{\substack{I,J,K,L\in\N_0 \\ J\ge1,\,I+J=A \\ K+L=B}}
\left\{\frac{1}{\;(w_{e_V}+w_e)(I+1)+w_{e_V}K\;}+\frac{1}{\;(w_{e_V}+w_e)J+w_{e_V}(L+1)\;}\right\} \notag \\
&=
(w_{e_V}+w_e)\cdot\min_{\substack{I,J,K,L\in\N_0 \\ J\ge1,\,I+J=A \\ K+L=B}}
\frac{(w_{e_V}+w_e)(A+1)+w_{e_V}(B+1)}{\;\dis\Big\{(w_{e_V}+w_e)(I+1)+w_{e_V}K\Big\}\Big\{(w_{e_V}+w_e)J+w_{e_V}(L+1)\Big\}\;} \notag \\
&=
(w_{e_V}+w_e)\cdot
\frac{\vol(H)}{\;\dis\max_{\substack{I,J\in\N,\,K,L\in\N_0 \\ I+J=A+1 \\ K+L=B}}\dis\Big\{(w_{e_V}+w_e)I+w_{e_V}K\Big\}\Big\{(w_{e_V}+w_e)J+w_{e_V}(L+1)\Big\}\;}. \label{Fig2 2}
\end{align}

Therefore, by \autoref{flow of C}, \eqref{Fig2 1} and \eqref{Fig2 2}, we obtain \eqref{xpp kxy weighted}.

\subsection{A hypergraph in \autoref{V=n, E=ZZ}} \label{subsub calculation E=ZZ}

We again consider a flow on $e_V$ represented by \eqref{flow}.
Similarly to \autoref{subsub calculation E=XZZ}, we consider the following cases:
\begin{itemize}
\item[\textbf{(A)}] $\max_{\mathsf{b}\in\mathsf{B}_e}\kakko(f,\mathsf{b})=0$.
\item[\textbf{(B)}] $\max_{\mathsf{b}\in\mathsf{B}_e}\kakko(f,\mathsf{b})=1$.
\end{itemize}

\vspace{1mm}
\textbf{(A)} \underline{The case $\max_{\mathsf{b}\in\mathsf{B}_e}\kakko(f,\mathsf{b})=0$}: 
Since there is no flow on $e$ associated with the above flow, we have $IJ=0$.
Note that by \autoref{key cor}, $\kakko(f,\dev)=\kakko(f,\dex)$ or $\kakko(f,\dev)=\kakko(f,\dey)$ holds for all $v\in e$.
\begin{itemize}
\item 
The case $\kakko(f,\dev)=\kakko(f,\dex)$: 
We find $(I,J)=(A,0)$.
Hence, by \eqref{L0 formula} and \autoref{key property}, we have
\begin{align*}
\left\{
\begin{aligned}
&\;\dis \LL^0f(x)+A\LL^0f(p_i)+K\LL^0f(q_k)\;=\;w_{e_V} , \vspace{1mm} \\
&\;\dis \LL^0f(y)+L\LL^0f(q_\ell)=-w_{e_V},
\end{aligned}
\right. 
\qquad\text{ and }\qquad
\left\{
\begin{aligned}
&\;\dis \kakko(\LL^0f,\dex)
=
\frac{w_{e_V}}{\;(w_{e_V}+w_e)A+w_{e_V}(K+1)\;} , \vspace{1mm} \\
&\;\dis \kakko(\LL^0f,\dey)
=
-\frac{1}{\;L+1\;}.
\end{aligned}
\right. 
\end{align*}
Thus, we obtain
\begin{align}
\min_{\substack{K,L\in\N_0 \\ K+L=B}}
\left\{\frac{w_{e_V}}{\;(w_{e_V}+w_e)A+w_{e_V}(K+1)\;}+\frac{1}{\;L+1\;}\right\} 
&=
\min_{\substack{K,L\in\N_0 \\ K+L=B}}
\frac{\;(w_{e_V}+w_e)A+w_{e_V}(B+2)\;}{\;\dis\Big\{(w_{e_V}+w_e)A+w_{e_V}(K+1)\Big\}(L+1)\;} \notag \\
&=
\frac{\vol(H)}{\;\dis\max_{\substack{K,L\in\N_0 \\ K+L=B}}\Big\{(w_{e_V}+w_e)A+w_{e_V}(K+1)\Big\}(L+1)\;}. \label{Fig3 1}
\end{align}
\item
The case $\kakko(f,\dev)=\kakko(f,\dey)$: 
We find $(I,J)=(0,A)$.
Hence, by \eqref{L0 formula} and \autoref{key property}, we have
\begin{align*}
\left\{
\begin{aligned}
&\;\dis \LL^0f(x)+K\LL^0f(q_k)\;=\;w_{e_V} , \vspace{1mm} \\
&\;\dis \LL^0f(y)+A\LL^0f(p_j)+L\LL^0f(q_\ell)=-w_{e_V},
\end{aligned}
\right. 
\qquad\text{ and }\qquad
\left\{
\begin{aligned}
&\;\dis \kakko(\LL^0f,\dex)
=
\frac{1}{\;K+1\;} , \vspace{1mm} \\
&\;\dis \kakko(\LL^0f,\dey)
=
-\frac{w_{e_V}}{\;(w_{e_V}+w_e)A+w_{e_V}(L+1)\;}.
\end{aligned}
\right. 
\end{align*}
Thus, we obtain
\begin{align*}
\min_{\substack{K,L\in\N_0 \\ K+L=B}}
\left\{\frac{1}{\;K+1\;}+\frac{w_{e_V}}{\;(w_{e_V}+w_e)A+w_{e_V}(L+1)\;}\right\} 
&=
\min_{\substack{K,L\in\N_0 \\ K+L=B}}
\frac{\;(w_{e_V}+w_e)A+w_{e_V}(B+2)\;}{\;\dis(K+1)\Big\{(w_{e_V}+w_e)A+w_{e_V}(L+1)\Big\}\;} \\
&=
\frac{\vol(H)}{\;\dis\max_{\substack{K,L\in\N_0 \\ K+L=B}}(K+1)\Big\{(w_{e_V}+w_e)A+w_{e_V}(L+1)\Big\}\;} \\
&=
\eqref{Fig3 1}.
\end{align*}
\end{itemize}

\vspace{1mm}
\textbf{(B)} \underline{The case $\max_{\mathsf{b}\in\mathsf{B}_e}\kakko(f,\mathsf{b})=1$}: 
The flow on $e$ associated with the flow \eqref{flow} is represented by
\begin{equation*}
\{\;\underbrace{\,\ldots\,,\;p_i,\;\ldots\,}_{I\text{ times}}\}
\quad \to \quad 
\{\underbrace{\,\ldots\,,\;p_j,\;\ldots\,}_{J\text{ times}}\},
\end{equation*}
where $I,J\ge1$ and $A\ge2$.
Hence, by \eqref{L0 formula} and \autoref{key property}, we have
\begin{align*}
\left\{
\begin{aligned}
&\;\dis \LL^0f(x)+I\LL^0f(p_i)+K\LL^0f(q_k)\;=\;w_{e_V}+w_e , \vspace{1mm} \\
&\;\dis \LL^0f(y)+J\LL^0f(p_j)+L\LL^0f(q_\ell)=-(w_{e_V}+w_e),
\end{aligned}
\right. 
\qquad\text{ and }\qquad
\left\{
\begin{aligned}
&\;\dis \kakko(\LL^0f,\dex)=\frac{w_{e_V}+w_e}{\;(w_{e_V}+w_e)I+w_{e_V}(K+1)\;}, \vspace{1mm} \\
&\;\dis \kakko(\LL^0f,\dey)=-\frac{w_{e_V}+w_e}{\;(w_{e_V}+w_e)J+w_{e_V}(L+1)\;}.
\end{aligned}
\right. 
\end{align*}
Thus, we obtain
\begin{align}
&\;\quad 
(w_{e_V}+w_e)\cdot\min_{\substack{I,J,K,L\in\N_0 \\ I,J\ge1,\,I+J=A \\ K+L=B}}
\left\{\frac{1}{\;(w_{e_V}+w_e)I+w_{e_V}(K+1)\;}+\frac{1}{\;(w_{e_V}+w_e)J+w_{e_V}(L+1)\;}\right\} \notag \\
&=
(w_{e_V}+w_e)\cdot\min_{\substack{I,J,K,L\in\N_0 \\ I,J\ge1,\,I+J=A \\ K+L=B}}
\frac{(w_{e_V}+w_e)A+w_{e_V}(B+2)}{\;\dis\Big\{(w_{e_V}+w_e)I+w_{e_V}(K+1)\Big\}\Big\{(w_{e_V}+w_e)J+w_{e_V}(L+1)\Big\}\;} \notag \\
&=
(w_{e_V}+w_e)\cdot
\frac{\vol(H)}{\;\dis\max_{\substack{I,J\in\N,\,K,L\in\N_0 \\ I+J=A \\ K+L=B}}\Big\{(w_{e_V}+w_e)I+w_{e_V}(K+1)\Big\}\Big\{(w_{e_V}+w_e)J+w_{e_V}(L+1)\Big\}\;}. \label{Fig3 2}
\end{align}

Therefore, by \autoref{flow of C}, \eqref{Fig3 1} and \eqref{Fig3 2}, we obtain \eqref{pp kxy weighted}.


\end{document}